\let\oldtocsection=\tocsection
\let\oldtocsubsection=\tocsubsection
\let\oldtocsubsubsection=\tocsubsubsection
\renewcommand{\tocsection}[2]{\hspace{0em}\oldtocsection{#1}{#2}}
\renewcommand{\tocsubsection}[2]{\hspace{1em}\oldtocsubsection{#1}{#2}}
\renewcommand{\tocsubsubsection}[2]{\hspace{2em}\oldtocsubsubsection{#1}{#2}}
\newtheorem{theorem}{Theorem}[section]
\newtheorem{proposition}[theorem]{Proposition}
\newtheorem{lemma}[theorem]{Lemma}
\newtheorem{corollary}[theorem]{Corollary}
\theoremstyle{definition}
\newtheorem{definition1}[theorem]{Definition}
\newtheorem{remark1}[theorem]{Remark}
\newcommand\Idd{I}
\newcommand{\lbarbrack}{|\![}
\newcommand{\rbarbrack}{]\!|}
\newcommand{\BBE}{{\mathbb E}_2}
\newcommand{\BBEpm}{\BBE^\pm}
\newcommand{\PP}{\mathbb P}
\newcommand{\sF}{\mathsf F}
\newcommand{\sP}{\mathsf P}
\newcommand{\WF}{W_{\sF}}
\newcommand{\WP}{W_{\sP}}
\newcommand{\MF}{M}
\newcommand{\BM}{\mathbf M}
\newcommand{\oWF}{\overline{W}_{\sF}}
\newcommand{\oWP}{\overline{W}_{\sP}}
\newcommand{\oMF}{\overline{M}}
\newcommand{\CC}{\mathcal C}
\newcommand{\OO}{\mathcal O}
\newcommand{\BC}{\mathbf C} 
\newcommand{\Bc}{\mathbf c}
\newcommand{\BD}{\mathbf D}
\newcommand{\J}{\mathbf J}
\newcommand{\BK}{\mathbf K}
\newcommand{\BL}{\mathbf L}
\newcommand{\Q}{\mathbf Q}
\newcommand{\BU}{\mathbf U}
\newcommand{\Z}{\mathbf Z}
\newcommand{\oBD}{\overline{\BD}}
\newcommand{\oJ}{\overline{\J}}
\newcommand{\oBL}{\overline{\BL}}
\newcommand{\oBU}{\overline{\BU}}
\DeclareTextCommand{\textover}{PU}{\9040\076}
\newcommand{\vect}{\binom}
\DeclareMathOperator\ncl{ncl}
\DeclareMathOperator\GLT{GL_{2}}
\DeclareMathOperator\SL{SL}
\DeclareMathOperator\SLT{\SL_{2}}
\DeclareMathOperator\SLTpm{\SLT^{\raisebox{.01in}{\!\!\ensuremath{\scriptstyle \pm}}}}
\DeclareMathOperator\SF{\mathscr{F}}
\DeclareMathOperator\SR{\mathscr{R}}
\DeclareMathOperator\length{length}
\DeclareMathOperator\olength{\overline{\length}}
\DeclareMathOperator\In{In}
\DeclareMathOperator\Rep{Rep}
\DeclareMathOperator\kk{k}
\DeclareMathOperator\CF{CF}
\DeclareMathOperator\Norm{Nm}
\DeclareMathOperator\PCF{\textup{\textsf{PCF}}}
\DeclareMathOperator\oPCF{\overline{\textup{\textsf{PCF}}}}
\DeclareMathOperator\RCF{\textup{\textsf{RCF}}}
\DeclareMathOperator\FCF{\textup{\textsf{FCF}}}
\DeclareMathOperator\oFCF{\overline{\textup{\textsf{FCF}}}}
\DeclareMathOperator\Quad{Quad}
\DeclareMathOperator\Roots{Roots}
\begin{document}

\title{A composition law and refined notions of convergence
for periodic continued fractions}

\subjclass[2020]{Primary 11J70; Secondary 11G30}
\keywords{
elementary matrices,  $\SLT$}

\author[B. W. Brock]{Bradley~W.~Brock}
\address{Center for Communications Research, 805 Bunn Drive, Princeton,
NJ 08540-1966, USA}
\email{brock@idaccr.org}

\author{Bruce~W.~Jordan}
\address{Department of Mathematics, Baruch College, The City University
of New York, One Bernard Baruch Way, New York, NY 10010-5526, USA}
\email{bruce.jordan@baruch.cuny.edu}

\author[L. Smithline]{Lawren Smithline}
\address{Center for Communications Research, 805 Bunn Drive, Princeton,
NJ 08540-1966, USA}
\email{lawren@idaccr.org}

\begin{abstract}
We define an equivalence relation on periodic continued fractions with
partial quotients in a ring $\OO \subseteq \BC$, a group law on these
equivalence classes, and a map from these equivalence classes to matrices
in $\GLT(\OO)$ with determinant $\pm1$.
We prove this group of equivalence classes is isomorphic to $\Z/2\Z\ast\OO$ and study
certain of its one- and two-dimensional representations.

For a periodic continued fraction with period $k$, we give a refined description
of the limits of the $k$ different $k$-decimations of its sequence of convergents.
We show that for a periodic continued fraction associated to a matrix with eigenvalues of
different magnitudes, all $k$ of these limits exist in $\PP^1(\BC)$ and a strict majority
of them are equal.
\end{abstract}

\maketitle
\tableofcontents

\section{Introduction}
Continued fractions with positive integer partial quotients
are a staple of classical number theory, with
an intimate connection to Euclid's algorithm.
Here, we consider the more general case of continued fractions with partial
quotients in a subring $\OO\ni 1$  of the complex numbers $\BC$,
as in \cite{bej}.
We define an equivalence relation on the set 
$\PCF(\OO)$ of periodic continued fractions
over $\OO$ and a group law on the resulting equivalence classes
$\oPCF(\OO)$.

The theory of periodic continued fractions, unlike continued fractions in
general, has connections to both arithmetic groups and diophantine equations.
The work \cite{bej} relates the theory of $\PCF(\OO)$ to the group
$\SLT^\pm(\OO)$, the subgroup of $\GLT(\OO)$ consisting of matrices with
determinant $\pm1$, and to diophantine problems on associated varieties.
In this paper  we further explore the relationship between $\PCF(\OO)$ and the group
$\SLT^\pm(\OO)$.
A major influence in our study is the 1966 paper \cite{Cohn} by Cohn.

Section \ref{sec:2x2} sets out preliminaries related to
actions of $2\times2$ matrices.
We consider the action of $\GLT(\OO)$ by linear fractional transformations
on the projective line $\PP^1(\BC)$.
We also name subgroups of $\GLT(\BC)$ determined
by common eigenspaces.
In Section \ref{sec:contfrac}, we introduce the 
concatentation binary operation $\star$  
on the semigroup $\FCF(\OO)$ of finite continued fractions over $\OO$.
We develop an equivalence relation on $\FCF(\OO)$ and show that
the equivalence classes $\oFCF(\OO)$ form a group  under $\star$ .
We exhibit a map $M: \FCF(\OO) \rightarrow \SLT^{\pm}(\OO)$ in
Proposition \ref{prop:mapSLT} arising
from the algorithm for computing the value of a
finite continued fraction. 
We show that $M$ is well defined on equivalence classes,
giving a homomophism $\overline{M}:\oFCF(\OO)\rightarrow \SLT^{\pm}(\OO)$.
We review
results of Cohn \cite{Cohn} that give information on
 $\ker(\overline{M})$ for certain rings $\OO$.
 Using this, we show how 
the standard amalgam presentation 
$\SLT(\Z)\cong\Z/4\Z\ast_{\Z/2\Z}\Z/6\Z$ arises from 
presenting $\SLT(\Z)$ as an explicit
quotient of $\oFCF(\Z).$

Section \ref{sec:PCF} extends the structure
on finite continued fractions to periodic continued fractions (PCFs).
In Theorem \ref{thm:PCFisom}, we show that this yields a 
natural equivalence relation
on periodic continued fractions
yielding a group of equivalence classes $\oPCF(\OO)$ isomorphic to $\oFCF(\OO)$.
Section \ref{sec:twoDreps} relates the group $\oPCF(\OO)$ to quantities defined
in \cite{bej}.
We review the fundamental matrix
$E(P)\in\SLTpm(\OO)$ associated to a periodic continued fraction $P$ and
apply the isomorphism of Theorem \ref{thm:PCFisom} to show that $E$ is well-defined
on $\oPCF(\OO)$, giving a homomophism $\overline{E}:\oPCF(\OO)\rightarrow
\SLT^{\pm}(\OO)$.
For $A\in\GLT(\OO)$ we define subgroups $\oPCF_\OO(A)$ of 
$\oPCF(\OO)$ 
by requiring $\overline{E}(\overline{P})$
to be  a linear combination of $A$ and the identity matrix $I$.
We exhibit  characters
corresponding to each eigenvalue of $A$.
These characters have values that are units in a quadratic extension of $\OO$.
In Section \ref{dirge}, we give examples where $\OO$ is a number ring.

In Section \ref{sec:convergePCF}, we reformulate the PCF-convergence
criteria of  \cite[Thm.~4.3]{bej} 
in parallel with the cases of convergence  in Section \ref{linfrac}.
We show that when $E(P)$ has eigenvalues of different magnitudes
(which implies \textsf{quasiconvergence}),
the $k$ different $k$-decimations of its sequence of convergents all converge in $\PP^1(\BC)$,
and a strict majority agree.  Unanimity is equivalent to convergence.

In Section \ref{examples}, we demonstrate the sharpness of the result of Section
\ref{sec:convergePCF}.  We give examples showing
that the proportion of agreement of
the $k$ different $k$-decimations of the sequence of convergents of a 
quasiconvergent periodic continued fraction can be arbitrarily close to $1/2$ or arbitrarily close to and different from $1$. We show 
that the equivalence relation $\sim$ on $\PCF(\OO)$ respects quasiconvergence
but does not respect convergence.

\section{Actions of \texorpdfstring{$2\times 2$}{2\texttimes2} matrices}
\label{sec:2x2}

Let $\OO\subseteq\BC$ be a ring with $1$.
A finite continued fraction with partial quotients in $\OO$
 is an iterated quotient.
It can be interpreted in terms of the action of
$\GLT(\OO)\leq \GLT(\BC)$ on the projective line $\PP^1(\BC)$ by linear
fractional transformations which we review in this section.
We also name certain subgroups of $\GLT(\BC)$ determined by common eigenspaces.

\subsection{Linear fractional transformations}
\label{linfrac}
Suppose
\begin{equation}
\label{eq:simplematrix}
M=\begin{bmatrix} m_{11} & m_{12}\\ m_{21} & m_{22}\end{bmatrix}\in\GLT(\BC)
\end{equation}
and $\beta \in \PP^1(\BC)$.
The matrix $M$ acts on $\beta$ by
linear fractional transformation
$$M\beta = \frac{m_{11}\beta + m_{12}}{m_{21}\beta + m_{22}},$$
where we interpret $1/0 = \infty\in\PP^1(\BC)$.
For $\beta\in\PP^1(\BC)$, let
\begin{equation}
\label{owl}
v(\beta)=\vect{\beta}{1}\text{ if }\beta\ne\infty\text{ and }v(\infty)=\vect10.
\end{equation}
The vector $v(\beta)$ is an eigenvector of $M$ if and only if $M\beta = \beta$.
For 
nonzero polynomial $Q=aX^2+bX+c\in\BC[X]$,
let $\Roots(Q)$ be the multiset of zeros of $Q[x]$ in $\PP^1(\BC)$, 
with the convention that if $\deg(Q)=1$, then $\infty$ is a simple root of $Q$;
and if $\deg(Q)=0$, then $0\neq Q$ has a double root at $\infty$.
When $Q = 0$, say $\Roots(Q) = \PP^1(\BC)$.
For a $2\times 2$ matrix $M$ as above, set
\begin{equation}
\label{recycling}
\Quad(M)=m_{21}X^2+ (m_{22}-m_{11})X- m_{12}.
\end{equation}
We use $\Roots(M)$ to denote $\Roots(\Quad(M))$.
When $\beta\in\Roots(M)$, $v(\beta)$ is an eigenvector of $M$ with
eigenvalue 
\begin{equation}
\label{thunder}
\lambda(\beta)\colonequals m_{21}\beta +m_{22}=m_{11}+m_{12}/\beta.
\end{equation}
In particular, $\lambda(\infty)=m_{11}$ and $\lambda(0)=m_{22}$.

\begin{proposition}
\label{prop:Qstable}
Let $A$, $B$ be $2\times 2$ matrices over $\OO$, $I$ be the $2\times 2$ identity matrix.
Let  $\kappa, \lambda \in \OO$ be not both zero.
There exists $\mu \in \OO$ such that $\kappa B = \lambda A + \mu I$
if and only if
$\kappa\Quad(B)=\lambda\Quad(A)$.
\end{proposition}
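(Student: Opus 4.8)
The plan is to exploit the fact that $\Quad$, defined in \eqref{recycling}, is an $\OO$-linear map from $2\times 2$ matrices to polynomials of degree at most $2$: each coefficient of $\Quad(M)$ is an $\OO$-linear combination of the entries of $M$. The one extra ingredient is the observation that $\Quad(I)=0$, which is immediate since $I$ has zero lower-left entry, zero upper-right entry, and equal diagonal entries, so all three coefficients of $\Quad(I)$ vanish.

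The forward implication is then a one-line computation. If $\kappa B=\lambda A+\mu I$ for some $\mu\in\OO$, applying the linear map $\Quad$ and using $\Quad(I)=0$ gives $\kappa\Quad(B)=\lambda\Quad(A)+\mu\Quad(I)=\lambda\Quad(A)$, as desired.

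For the converse, I would write $A=(a_{ij})$, $B=(b_{ij})$ and compare the three coefficients of the polynomial identity $\kappa\Quad(B)=\lambda\Quad(A)$. The $X^2$-coefficient gives $\kappa b_{21}=\lambda a_{21}$ and the constant coefficient gives $\kappa b_{12}=\lambda a_{12}$; these are precisely the two off-diagonal entries of the target equation $\kappa B=\lambda A+\mu I$ (the term $\mu I$ contributing nothing off the diagonal). The $X$-coefficient gives $\kappa(b_{22}-b_{11})=\lambda(a_{22}-a_{11})$. It then remains to produce a single $\mu\in\OO$ realizing both diagonal entries, i.e.\ with $\mu=\kappa b_{11}-\lambda a_{11}$ and simultaneously $\mu=\kappa b_{22}-\lambda a_{22}$. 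I would simply set $\mu\colonequals\kappa b_{11}-\lambda a_{11}\in\OO$ and verify the second equation: the difference of the two candidate values for $\mu$ equals $-\bigl(\kappa(b_{22}-b_{11})-\lambda(a_{22}-a_{11})\bigr)$, which vanishes by the $X$-coefficient identity.

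The only point requiring any care, and the crux of the argument, is exactly this consistency of the two diagonal conditions; it is encoded precisely in the linear coefficient of $\Quad$, while the remaining entries are forced. I note in passing that the hypothesis that $\kappa,\lambda$ are not both zero is not actually needed for the equivalence itself, since when both vanish each side holds trivially with $\mu=0$; I therefore would not invoke it in the proof.
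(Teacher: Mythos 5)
Your proof is correct and follows essentially the same route as the paper: the forward direction via linearity of $\Quad$ and $\Quad(I)=0$, and the converse by observing that the coefficient identities force $\kappa B-\lambda A$ to be a scalar matrix (the paper phrases this as $\Quad(\kappa B-\lambda A)=0$, which you simply unwind entry by entry). Your side remark that the hypothesis ``$\kappa,\lambda$ not both zero'' is not needed for the equivalence is also accurate.
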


\begin{proof}
Suppose $\kappa, \lambda, \mu\in\OO$ with $(\kappa,\lambda)\neq (0,0)$ 
such that
$\kappa B = \lambda A + \mu I$.
The polynomial $\Quad(\lambda A+\mu I)$
is independent of $\mu$.
We have 
$$\kappa\Quad(B) = \Quad(\kappa B) = 
\Quad(\lambda A + \mu I) = \Quad(\lambda A) = \lambda \Quad(A).$$

Conversely, suppose $\kappa\Quad(B)=\lambda\Quad(A)$ 
for $\kappa,\lambda\in\OO$, $(\kappa,\lambda)\neq (0,0)$.
Since $\Quad$ is linear map, $\Quad(\kappa B - \lambda A) = 0$.
Thus, there is $\mu\in\OO$ such that $\kappa B-\lambda A = \mu I$.
\end{proof}

For a matrix $M$, $\Quad(M)$ and the eigenvalues of $M$
govern the convergence behavior of the sequence $M^n\beta$, $n \geq 0$,
for $\beta\in\PP^1(\BC)$.

\begin{definition1} 
\label{corned}
For any $x, y$, 
let $\delta(x,y)$ denote the diagonal matrix
$$
\delta(x,y) = 
\begin{bmatrix}x & 0 \\0 & y\end{bmatrix}.
$$
\end{definition1}

\begin{proposition}
\label{tetrapartite}
For $M\in \GLT(\BC)$, one of four mutually exclusive possibilities holds:
\begin{enumerate}
\item \label{tetrapartite1}
$\Quad(M)$ has one root $\hat\beta$ of multiplicity \textup{2}
and $\lim_{n\to\infty}M^n\beta=\hat\beta$ for all $\beta\in \PP^1(\BC)$.  
\item \label{tetrapartite2}
$\Quad(M) = 0$.  For all $\beta\in\PP^1(\BC)$, $M\beta = \beta$ and so
$lim_{n \to \infty}M^n\beta = \beta$.
\item \label{tetrapartite3}
$\Quad(M)$ has distinct roots $\beta_+, \beta_-$, and $M$ has corresponding eigenvalues
$\lambda_+=\lambda(\beta_+)$, $\lambda_-=\lambda(\beta_-)$
as in \eqref{thunder}, with $|\lambda_+| > |\lambda_-|$.
For all $\beta\ne\beta_-\in\PP^1(\BC)$, $lim_{n \to \infty}M^n\beta = \beta_+$.
For $\beta = \beta_-$, $lim_{n \to \infty}M^n\beta = \beta_-$.
\item \label{tetrapartite4}
$\Quad(M)$ has distinct roots with  $M$ having
 distinct eigenvalues of the same magnitude.  For $\beta \in \Roots(M)$,
$\lim_{n \to \infty}M^n\beta = \beta$.  Otherwise, the sequence $M^n \beta$ diverges.
\end{enumerate}
\end{proposition}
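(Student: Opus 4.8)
The plan is to exploit conjugation-invariance to reduce $M$ to a normal form and then compute the orbit $M^n\beta$ directly in each form. First I would record the equivariance: for $N\in\GLT(\BC)$ the identity $(NMN^{-1})^n(N\beta)=N(M^n\beta)$ holds, $N$ acts on $\PP^1(\BC)$ as a homeomorphism, and conjugation fixes the eigenvalues of $M$ while carrying its fixed points (the elements of $\Roots(M)$) by $\beta\mapsto N\beta$. Replacing $M$ by a nonzero scalar multiple leaves the linear fractional action unchanged and scales both eigenvalues equally, hence preserves the comparison of $|\lambda_+|$ with $|\lambda_-|$. Thus all four assertions are invariant under conjugation and scaling, so it suffices to treat $M$ in Jordan normal form (up to a harmless scalar).

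Second, I would match the algebraic trichotomy for $\Quad(M)$ with the Jordan type. The discriminant of $\Quad(M)=m_{21}X^2+(m_{22}-m_{11})X-m_{12}$ equals $(m_{22}-m_{11})^2+4m_{21}m_{12}=(m_{11}+m_{22})^2-4\det M$, which is the discriminant of the characteristic polynomial of $M$; this identity, together with the projective conventions for $\Roots$ in the degenerate degree cases ($m_{21}=0$), shows that $\Quad(M)$ has distinct roots exactly when $M$ has distinct eigenvalues, and a single root of multiplicity $2$ exactly when $M$ has a repeated eigenvalue but is not scalar. By Proposition \ref{prop:Qstable} (applied with $A=I$, $\kappa=1$, $\lambda=0$, so that $\Quad(I)=0$), the remaining case $\Quad(M)=0$ holds exactly when $M=\mu I$ is scalar. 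This confirms the four possibilities are mutually exclusive and exhaustive and pins down which Jordan type produces each.

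Third comes the direct computation in each normal form. A scalar $M=\mu I$ fixes every point, giving case \eqref{tetrapartite2}. A non-scalar matrix with a repeated eigenvalue $\lambda\neq 0$ (nonzero since $M$ is invertible) conjugates to the Jordan block $\begin{bmatrix}\lambda&1\\0&\lambda\end{bmatrix}$, whose linear fractional action is the translation $\beta\mapsto\beta+1/\lambda$; hence $M^n\beta\to\infty$ for every $\beta$ and $\infty$ is the unique fixed point $\hat\beta$, which is case \eqref{tetrapartite1}. A matrix with distinct eigenvalues conjugates to $\delta(\lambda_+,\lambda_-)$, acting by $\beta\mapsto(\lambda_+/\lambda_-)\beta$ with fixed points $\beta_+=\infty$ (eigenvalue $\lambda_+=\lambda(\infty)=m_{11}$) and $\beta_-=0$; writing $r=\lambda_+/\lambda_-$ one has $M^n\beta=r^n\beta$. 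When $|\lambda_+|>|\lambda_-|$, so $|r|>1$, this tends to $\infty=\beta_+$ for $\beta\neq 0$ and stays at $0=\beta_-$, which is case \eqref{tetrapartite3}; when $|\lambda_+|=|\lambda_-|$ with $\lambda_+\neq\lambda_-$, so $|r|=1$ and $r\neq 1$, the two fixed points are still fixed while for every other $\beta$ the sequence $r^n\beta$ stays on the circle $|z|=|\beta|$ and cannot converge, which is case \eqref{tetrapartite4}.

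The normal-form computations are routine; the step needing genuine care is this last one, where I must argue divergence in case \eqref{tetrapartite4} from the fact that $r^n$ fails to converge in $\PP^1(\BC)$ when $|r|=1$ and $r\neq 1$, and must check that the magnitude comparison, transported back through the conjugation and scaling of Step 1, correctly separates cases \eqref{tetrapartite3} and \eqref{tetrapartite4}. The other point demanding attention is the bookkeeping of the $\Roots$ conventions in Step 2, ensuring the degenerate-degree cases land in the intended possibility.
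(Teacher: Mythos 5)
Your proposal is correct and follows essentially the same route as the paper: reduce to Jordan normal form by conjugation-equivariance of the linear fractional action and then compute $M^n\beta$ directly in each normal form, with the four Jordan types matching the four root configurations of $\Quad(M)$. You supply more detail than the paper does on the equivariance and on the discriminant identity linking $\Quad(M)$ to the characteristic polynomial, but the underlying argument is the same.
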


\begin{proof} Every $M$ is conjugate to its Jordan normal form, so the result follows if it is 
so for $M$ in Jordan normal form.
Let $\beta\in\PP^1(\BC)$.
The design of cases \eqref{tetrapartite1}--\eqref{tetrapartite4} makes evident 
that exactly one of the following holds, where $\lambda_1$, $\lambda_2$ are the
eigenvalues of $M$.  In cases (a) and (b) below there is one eigenvalue
$\lambda_1$ of multiplicity $2$.
\begin{enumerate}
\item
$M$ is defective, $M-\lambda_1I$ is nilpotent, and
$$\lim_{n\to\infty}\begin{bmatrix}\lambda_1 & 1\\ 0 & \lambda_1\end{bmatrix}^n\beta =\infty;$$
\item
$M = \lambda_1 I$, and
$$\lim_{n\to\infty} \delta(\lambda_1, \lambda_1)^n\beta =\beta;$$
\item
$M = \delta(\lambda_1, \lambda_2), |\lambda_1| > |\lambda_2|$, and
$$
\lim_{n\to\infty}
\delta(\lambda_1, \lambda_2)^n\beta =
\begin{cases}
\infty,\, \beta\ne0,\\[.05in]
0,\,\beta=0;\\
\end{cases}\\[0.05in]
$$
\item
$M = \delta(\lambda_1, \lambda_2), |\lambda_1| = |\lambda_2|$, $\lambda_1 \neq \lambda_2$, and
\[
\lim_{n\to\infty}
\delta(\lambda_1, \lambda_2)^n\beta =\begin{cases}
\beta,\textrm{ for } \beta \in \{0, \infty\}, \\
\text{does not exist for }\beta\notin\{0,\infty\}.
\end{cases}
\]
\end{enumerate}
\end{proof}

\subsection{Subgroups of \texorpdfstring{\except{toc}\protect{\boldmath{$\GLT(\BC)$}}\for{toc}{$\GLT(\BC)$}}
{GL\texttwoinferior(C)} determined by eigenspaces}
\label{subsec:eigenspace}

The subsets of $\GLT(\BC)$ with particular eigenvectors form subgroups of $\GLT(\BC)$.

\begin{proposition}
\label{prop:scalarmatrix}
Let $M \in \GLT(\BC)$.
Every $\beta \in \PP^1(\BC)$ satisfies $M\beta=\beta$
if and only if
$M$ is a scalar multiple of the identity matrix $I$.
\end{proposition}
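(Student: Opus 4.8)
The forward implication is a direct computation: if $M=cI$ for a scalar $c\neq0$, then for $\beta\neq\infty$ we have $M\beta=(c\beta)/c=\beta$, while $Mv(\infty)=c\,v(\infty)$ shows $\infty$ is fixed as well, so every $\beta\in\PP^1(\BC)$ is fixed. The content is in the converse.

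For the converse, the plan is to translate the hypothesis into the vanishing of $\Quad(M)$. First I would record the basic fact, implicit in the setup of Section~\ref{linfrac}, that for $M$ as in \eqref{eq:simplematrix} and finite $\beta$ the fixed-point equation $M\beta=\beta$ unwinds to $m_{11}\beta+m_{12}=\beta(m_{21}\beta+m_{22})$, i.e.\ to $m_{21}\beta^2+(m_{22}-m_{11})\beta-m_{12}=0$, which is exactly $\beta\in\Roots(\Quad(M))$; the conventions at $\infty$ in the definition of $\Roots$ extend this equivalence to all of $\PP^1(\BC)$. Thus the set of points fixed by the linear fractional action of $M$ is precisely $\Roots(M)=\Roots(\Quad(M))$.

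Assuming $M\beta=\beta$ for every $\beta\in\PP^1(\BC)$ then gives $\Roots(\Quad(M))=\PP^1(\BC)$. Under the stated multiset conventions a nonzero polynomial of degree at most $2$ has at most two roots in $\PP^1(\BC)$, while $\PP^1(\BC)$ is infinite, so the only way to realize $\Roots(\Quad(M))=\PP^1(\BC)$ is $\Quad(M)=0$. Reading off the coefficients of $\Quad(M)=m_{21}X^2+(m_{22}-m_{11})X-m_{12}$ then forces $m_{21}=0$, $m_{12}=0$, and $m_{22}=m_{11}$, i.e.\ $M=m_{11}I$ is a scalar multiple of the identity.

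I expect no serious obstacle; the only point needing care is the bookkeeping at $\beta=\infty$ when identifying the fixed-point set with $\Roots(M)$, so that the projective conventions neither drop a genuine fixed point nor introduce a spurious one. An alternative, purely linear-algebraic route avoids $\Quad$ entirely: the hypothesis says every nonzero vector of $\BC^2$ is an eigenvector of $M$, and applying $M$ to $v(\infty)=\vect{1}{0}$, $v(0)=\vect{0}{1}$, and $v(1)=\vect{1}{1}$ and comparing coefficients shows the three associated eigenvalues coincide, again yielding $M=\lambda I$.
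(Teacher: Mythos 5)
Your proposal is correct. The forward direction matches the paper's, but your main converse argument takes a genuinely different route: you identify the fixed-point set of the linear fractional action with $\Roots(\Quad(M))$, observe that a nonzero polynomial of degree at most $2$ has at most two roots in $\PP^1(\BC)$ under the stated conventions, conclude $\Quad(M)=0$, and read off the coefficients. The paper instead argues purely linear-algebraically: fixing $0$ and $\infty$ makes $v(0)$ and $v(\infty)$ eigenvectors, so $M$ is diagonal, and fixing $1$ then forces the diagonal entries to agree --- exactly the alternative you sketch in your last sentence. Your bookkeeping at $\infty$ is sound (with $M$ invertible, the numerator and denominator of $M\beta$ cannot vanish simultaneously, so the finite fixed points are precisely the finite roots of $\Quad(M)$, and $M\infty=\infty$ iff $m_{21}=0$ iff $\infty\in\Roots(\Quad(M))$ under the degree conventions). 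What your route buys is an explicit statement that the fixed-point locus of $M$ equals $\Roots(M)$, a fact the paper uses implicitly throughout Section~\ref{linfrac}, and it dovetails with Proposition~\ref{prop:Qstable} and the case $\Quad(M)=0$ in Proposition~\ref{tetrapartite}; the paper's three-eigenvector argument is shorter and avoids invoking $\Quad$ at all. Either is acceptable here.
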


\begin{proof}
Write $M=[m_{ij}]_{1\leq i,j\leq 2}$ as in \eqref{eq:simplematrix}.
If $M = m_{11}I$, then every $\beta$ satisfies $M\beta = \beta$.
Conversely, if every $\beta$ satisfies $M\beta = \beta$, then $v(0)$ and 
$v(\infty)$ are eigenvectors of $M$ and $M$ is diagonal.
Since $v(1)$ is also an eigenvector, 
$M$ must be a scalar multiple of the identity.
\end{proof}

\begin{proposition}
\label{prop:singlevector}
Let $0\neq L(X)\in\BC[X]$, $\deg(L)\leq 1$,
with root $\beta\in \PP^1(\BC)$. Set
$T(\beta) = \{M \in \GLT(\BC) \mid M\beta = \beta \}$.
\begin{enumerate}
\item
\label{day1}
 The set  $T(\beta)$ is a subgroup of $\GLT(\BC)$ 
conjugate to the Borel subgroup of $\GLT(\BC)$
consisting of upper triangular matrices.
\item
\label{day2}
For every $M\in T(\beta)$, $L(X)$ divides $\Quad(M)$.
If $\deg L(X)=0$, $L(X)|\Quad(M)$ means $\deg \Quad(M)\leq 1$.
\item
\label{day3}
The group $T(\beta)$ has a multiplicative character $\lambda_\beta$
mapping $M$ to its $v(\beta)$-eigenvalue.
\end{enumerate}
\end{proposition}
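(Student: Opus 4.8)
The plan is to view $T(\beta)$ as the stabilizer of the point $\beta$ under the transitive action of $\GLT(\BC)$ on $\PP^1(\BC)$, and then to exploit that every $M\in T(\beta)$ shares $v(\beta)$ as an eigenvector. For \eqref{day1}, I would first record that $\GLT(\BC)$ acts transitively on $\PP^1(\BC)$, so there is $g\in\GLT(\BC)$ with $g\infty=\beta$ (concretely, a matrix carrying $v(\infty)=\binom10$ to a representative of $v(\beta)$). Since $M\beta=\beta$ is equivalent to $v(\beta)$ being an eigenvector of $M$, the set $T(\beta)$ is exactly the point stabilizer of $\beta$; it is therefore a subgroup, and stabilizers along a single orbit are conjugate, giving $T(\beta)=g\,T(\infty)\,g^{-1}$. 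Finally the linear fractional formula gives $M\infty=m_{11}/m_{21}$, so $M\infty=\infty$ if and only if $m_{21}=0$; hence $T(\infty)$ is precisely the upper triangular Borel subgroup and $T(\beta)$ is conjugate to it.

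For \eqref{day2}, the crux is simply that $M\beta=\beta$ places $\beta$ in $\Roots(M)=\Roots(\Quad(M))$. If $\beta\neq\infty$, then $L$ necessarily has degree $1$, say $L=c(X-\beta)$, and because $\beta$ is a zero of $\Quad(M)$ we get $(X-\beta)\mid\Quad(M)$, i.e. $L\mid\Quad(M)$ in the usual sense. If $\beta=\infty$, then under the stated convention $L\mid\Quad(M)$ is to be read as $\infty\in\Roots(\Quad(M))$, equivalently $\deg\Quad(M)\leq1$; and this holds because $M\infty=\infty$ forces $m_{21}=0$ by the computation in \eqref{day1}, killing the leading coefficient of $\Quad(M)$.

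For \eqref{day3}, I would define $\lambda_\beta(M)$ to be the scalar by which $M$ acts on the common eigenvector $v(\beta)$, which is the eigenvalue $\lambda(\beta)$ of \eqref{thunder}. Multiplicativity then drops out of the eigenvector relation: for $M,N\in T(\beta)$,
\[
(MN)\,v(\beta)=M\bigl(\lambda_\beta(N)\,v(\beta)\bigr)=\lambda_\beta(M)\,\lambda_\beta(N)\,v(\beta),
\]
so $\lambda_\beta(MN)=\lambda_\beta(M)\lambda_\beta(N)$, while $\lambda_\beta(M)\neq0$ because $M$ is invertible. Thus $\lambda_\beta$ is a multiplicative character; equivalently, conjugating by the $g$ of \eqref{day1} identifies it with the top-left-entry character of the Borel.

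The group-theoretic content here is routine---a point stabilizer in $\GLT(\BC)$ is a conjugate of the Borel, and a common eigenvector always yields a multiplicative character---so I expect the only real friction to be the bookkeeping at $\infty$ in part \eqref{day2}: reconciling the projective root conventions for $\Quad$ and $\Roots$ with ordinary polynomial divisibility, and in particular carefully honoring the reinterpretation of $L\mid\Quad(M)$ when $\deg L=0$.
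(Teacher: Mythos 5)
Your proposal is correct and follows essentially the same route as the paper: part (\ref{day1}) via transitivity of the $\GLT(\BC)$-action and conjugacy of the stabilizer $T(\beta)$ to $T(\infty)$, part (\ref{day2}) via $\beta\in\Roots(M)$, and part (\ref{day3}) via the eigenvector computation $MM'v(\beta)=\lambda_\beta(M)\lambda_\beta(M')v(\beta)$. Your extra bookkeeping at $\infty$ in part (\ref{day2}) (using $m_{21}=0$ to force $\deg\Quad(M)\leq 1$) only spells out what the paper leaves implicit.
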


\begin{proof}
\eqref{day1}: The stabilizer $T(\infty)$ of $\infty\in\PP^1(\BC)$
consists of the upper triangular matrices.  Since $\GLT(\BC)$
acts transitively on $\PP^1(\BC)$, $T(\beta)$ is a subgroup of 
$\GLT(\BC)$ conjugate to $T(\infty)$ for every $\beta\in\PP^1(\BC)$.\\
\eqref{day2}: Suppose $M\in T(\beta)$.  Then $\beta\in\Roots(M)$
and $L(X)$ divides $\Quad(M)$.\\
\eqref{day3}:  Suppose $M, M'\in T(\beta)$.  Then $MM'\in T(\beta)$ and
\[
\lambda_\beta(MM')v(\beta)=MM'v(\beta)=M\lambda_\beta(M')v(\beta)=
\lambda_\beta(M)\lambda_\beta(M')v(\beta),
\]
so $\lambda_\beta$ is a multiplicative character on $T(\beta)$.
\end{proof}

\begin{proposition}
\label{prop:Qgroup}
Let $0\neq Q\in\BC[X]$ with $\deg(Q)\leq 2$. 
The set $$G(Q) = \{M \in \GLT(\BC) \mid \exists \lambda\in\BC\text{ such that } \Quad(M) = \lambda Q \}$$ is a group.
When $Q$ is not a square, $$G(Q) = T(\beta) \cap T(\beta^*)
\text{ with }\{\beta, \beta^* \}= \Roots(Q), \,\,\beta\neq\beta^\ast,$$
and $G(Q)$ is conjugate to the group of diagonal matrices in $\GLT(\BC)$.
When $Q$ is a square in $\BC[X]$, $G(Q)$ is conjugate to the 
the subgroup of upper triangular matrices in $\GLT(\BC)$ with equal diagonal entries.
\end{proposition}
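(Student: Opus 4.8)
The plan is to reduce everything to two normal forms via conjugation, after recording how $G(Q)$ behaves under it. First I would reformulate membership: since the defining condition $\Quad(M)=\lambda Q$ is invariant under rescaling $Q$, the set $G(Q)$ depends only on the multiset $\Roots(Q)\in\PP^1(\BC)$, and $M\in G(Q)$ if and only if either $M$ is a scalar matrix (so $\Quad(M)=0$, using Proposition~\ref{prop:scalarmatrix}) or $\Roots(M)=\Roots(Q)$ as multisets in $\PP^1(\BC)$. Here I use the correspondence from \eqref{thunder} and the surrounding discussion that $\beta\in\Roots(M)$ exactly when $v(\beta)$ is an eigenvector, i.e.\ when $M\beta=\beta$.

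The key lemma I would establish is \emph{conjugation covariance}: for $g\in\GLT(\BC)$ one has $g\,G(Q)\,g^{-1}=G(Q^g)$, where $Q^g$ is a nonzero polynomial of degree $\leq 2$ with $\Roots(Q^g)=g\cdot\Roots(Q)$ (such $Q^g$ exists and is unique up to scalar, since a degree-$\leq 2$ class is determined by its two projective roots under the conventions of \S\ref{linfrac}). This follows from the reformulation above together with two conjugation-invariances: being scalar is preserved, and the fixed-point multiset transforms equivariantly, $\Roots(gMg^{-1})=g\cdot\Roots(M)$. The set-level statement is immediate because $\beta$ is fixed by $M$ iff $g\beta$ is fixed by $gMg^{-1}$; the point requiring care is matching multiplicities, which I would handle by noting that the three possibilities (two distinct fixed points, a single fixed point with $M$ non-scalar, or all of $\PP^1(\BC)$) are each conjugation-invariant, so the multiplicity of each root is preserved.

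With covariance in hand I would treat the two cases by moving $\Roots(Q)$ to a convenient position, using that $\GLT(\BC)$ acts transitively, indeed $2$-transitively, on $\PP^1(\BC)$. If $Q$ is not a square, choose $g$ sending the distinct roots $\beta,\beta^*$ to $0,\infty$; then $Q^g$ has roots $\{0,\infty\}$, so $Q^g=cX$, and a direct reading of $\Quad(M)=m_{21}X^2+(m_{22}-m_{11})X-m_{12}=\lambda X$ forces $m_{21}=m_{12}=0$, so $G(Q^g)$ is exactly the group of diagonal matrices in $\GLT(\BC)$. Hence $G(Q)=g^{-1}\{\text{diagonal}\}g$ is a group conjugate to the diagonal subgroup; and since that subgroup is $T(\infty)\cap T(0)$, conjugating back gives $G(Q)=T(\beta)\cap T(\beta^*)$ (an equality one can also verify directly from Proposition~\ref{prop:singlevector}\eqref{day2}, using that the coprime linear factors with roots $\beta,\beta^*$ both divide $\Quad(M)$). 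If $Q$ is a square with double root $\hat\beta$, choose $g$ with $g\hat\beta=\infty$; then $Q^g$ has a double root at $\infty$, so $Q^g$ is a nonzero constant, and $\Quad(M)=\lambda Q^g$ forces $m_{21}=0$ and $m_{22}=m_{11}$, so $G(Q^g)$ is exactly the upper triangular matrices with equal diagonal entries. This is visibly a group (it equals $\{aI+bN:a\neq 0\}$ with $N$ nilpotent, $N^2=0$), and conjugating back shows $G(Q)$ is a group conjugate to it.

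The main obstacle is the covariance lemma, and within it the multiplicity and degenerate-root bookkeeping at $\infty$: keeping track of the degree-drop conventions of \S\ref{linfrac} when a root equals $\infty$, and verifying that a double root of $\Quad(M)$ corresponds exactly to a non-scalar matrix with a single fixed point. Once covariance and the root/fixed-point correspondence are pinned down, the case analysis is the short computation of $\Quad$ on diagonal and on upper triangular matrices recorded above.
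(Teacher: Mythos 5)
Your proposal is correct and follows essentially the same route as the paper: identify membership in $G(Q)$ via the correspondence between $\Roots(\Quad(M))$ and the fixed points/eigendirections of $M$, then conjugate the roots to $\{0,\infty\}$ (resp.\ $\{\infty,\infty\}$) and read off that $G(X)$ is the diagonal group and $G(1)$ is the upper triangular group with equal diagonal entries. The only difference is presentational — you isolate the conjugation covariance $gG(Q)g^{-1}=G(Q^g)$ and the multiplicity bookkeeping as an explicit lemma, which the paper leaves implicit.
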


\begin{proof}
When $Q$ is not a square, $\Roots(Q) = \{\beta, \beta^*\}$, $\beta \neq \beta^*$.
In this case, 
$M\in G(Q)$ is equivalent to $M$ having eigenvectors 
$v(\beta)$ and $v(\beta^*)$.
Hence, $G(Q) = T(\beta) \cap T(\beta^*)$. A change of basis mapping $\beta$ to $\infty$
and $\beta^*$ to $0$ conjugates $G(Q)$ to $G(x)$, the group of diagonal matrices.

When $Q$ is a square, $\Roots(Q)$ has one element $\beta$ of multiplicity $2$..
Every $M\in G(Q)$ is either a scalar multiple of $I$ or defective.
A change of basis mapping $\beta$ to $\infty$
conjugates $G(Q)$ to $G(1)$.  A matrix in $G(1)$ is either a scalar multiple of $I$ or
upper triangular with equal diagonal entries and not diagonal.
\end{proof}

\section{Finite continued fractions}
\label{sec:contfrac}
Let $\OO\subseteq \BC$ be a ring containing 1.
We define the set $\FCF(\OO)$ of finite continued fractions
with partial quotients in $\OO$,
and an operation
$\star$ making $\FCF(\OO)$ a semigroup.
We give an equivalence relation on $\FCF(\OO)$ and show that the equivalence classes
$\oFCF(\OO)$ form a group.

Let $c_i\in\BC$, $1\leq i\leq n$.

\begin{definition1}
\label{sun}
A \textsf{finite continued fraction} (FCF) $F=
[c_1,c_2, \ldots, c_n]$ is the formal expression
\begin{equation}
\label{contfrac}
 c_1 + \cfrac{1}{c_2+\cfrac{1}{c_3+\cfrac{1}{c_4+\cfrac{1}
{\raisebox{-1.2em}{\ensuremath{\ddots\quad}}\raisebox{-2.4em}
{\ensuremath{c_{n-1}+\cfrac{1}{c_n}}}}}}}.
\end{equation}
We say that the FCF $F=[c_1,\ldots, c_n]$ has \textsf{length} 
$\length(F)=n$.
\end{definition1}
The elements $c_i$ are called the \textsf{partial quotients}.  
For $k \leq n$, the \textsf{convergent}
$\CC_k(F)$ of $F$ is the evaluation of the finite continued fraction $[c_1,c_2,\ldots, c_k]$, 
where we interpret $1/0$ as $\infty \in \PP^1(\BC)$.
We distinguish between the
formal object $F = [c_1, c_2, \ldots c_k]$ and its value 
$\hat{F}\colonequals\CC_k(F) \in \PP^1( \BC)$.
There is a simple rule to iteratively compute $\CC_k(F)$ as a ratio $p_k/q_k$.  Let
\begin{equation*}
D(x)=\begin{bmatrix} x & 1 \\
1 & 0\end{bmatrix}.
\end{equation*}
Let $\Idd$ be the $2\times 2$ identity matrix and $F=[c_1, \ldots, c_n]$
be a finite continued fraction as in \eqref{contfrac}.
Define $p_{0}, q_{0}, p_{-1}, q_{-1}$ by the matrix equation
$$
\begin{bmatrix} p_0 & p_{-1} \\
q_0 & q_{-1}\end{bmatrix}
= \Idd .$$
Recursively define $p_k=p_k(F)$, $q_k=q_k(F)$ for  $n\geq k \geq 1$ by 
$$\begin{bmatrix} p_k & p_{k-1} \\
q_k & q_{k-1}\end{bmatrix}
=
\begin{bmatrix} p_{k-1} & p_{k-2} \\
q_{k-1} & q_{k-2}\end{bmatrix}
D(c_k)=D(c_1)\cdots D(c_k).$$
The numerators $p_k$ and denominators $q_k$ 
with $\CC_k(F)=p_k(F)/q_k(F)$
can be written in terms of continuant polynomials (see, e.g.,
\cite[p.~385]{bej}).

An $\OO$-FCF is a FCF 
$[c_1,\ldots, c_n]$ having all partial quotients $c_i\in\mathcal{O}$.

\subsection{An equivalence relation on finite continued fractions}
\label{harbor}

\begin{definition1}
The semigroup $\FCF(\OO)$ is the set 
$\{F\mid F\text{ is an $\OO$-FCF}\}$ with the concatenation
operation $\star$:
\[
[c_1,\ldots , c_n]\star[c'_1,\ldots , c'_{n'}]=
[c_1,\ldots , c_n][c'_1,\ldots , c'_{n'}]\colonequals
[c_1,\ldots, c_n, c_1',
\ldots, c_{n'}].
\]
\end{definition1}
Equivalently, $\FCF(\OO)$ is the semigroup of the 
set of words in formal symbols $\BD(x)$, for $x \in \OO$, with the
two descriptions identified by
\[
[x_1,\ldots, x_n]\leftrightarrow \BD(x_1)\cdots \BD(x_n).
\]

\begin{definition1}
\label{muster}
Let $\oFCF(\OO)$ be the set $\FCF(\OO)$
modulo the equivalence relation $\sim$ generated by the relations
\begin{equation}
\label{eq:Dsinglerelation}
\BD(x)\BD(0)\BD(y) = \BD(x+y),\  x, y \in \OO.
\end{equation}
For $F=[c_1,\ldots, c_n]\in \FCF(\OO)$ denote by $\overline{F}=
\lbarbrack c_1,\ldots, c_n\rbarbrack\in\oFCF(\OO)$ the equivalence
class containing $F$. 
For a word $\mathbf{w}$ in the $\mathbf{D}(x)$'s denote by 
$\overline{\mathbf{w}}$ the equivalence class containing $\mathbf{w}$.
\end{definition1}

\begin{proposition}
\label{mush}
\begin{enumerate}
\item
\label{mush1}
If $F_1,G_1, F_2, G_2 \in\FCF(\OO)$ with
$\overline{F}_1=\overline{F}_2$ and $\overline{G}_1=\overline{G}_2$,
then 
$$\overline{F_1\star G_1}=\overline{F_2\star G_2}\in\oFCF(\OO).$$
\item
\label{mush2}
The binary operation $\star$ on $\FCF(\OO)$ induces a well-defined
binary operation $\star$ on $\oFCF(\OO)$, making $\oFCF(\OO)$ a semigroup.
\item
\label{mush3}
The semigroup $\oFCF(\OO)$ is a group with identity $\lbarbrack 0,0\rbarbrack
\leftrightarrow \overline{\BD}(0)^2$.
\item
\label{mush4}
The element $\overline{\BD}(x)\in\oFCF(\OO)$ has inverse 
$\overline{\BD}(x)^{-1}=\overline{\BD}(0)\overline{\BD}(-x)\overline{\BD}(0)$.

\end{enumerate}
\end{proposition}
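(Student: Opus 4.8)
The plan is to view $\FCF(\OO)$ as the free semigroup on the symbols $\BD(x)$, $x\in\OO$, under concatenation, to prove (a) first, and then to derive (b)--(d) formally from it. For (a) I would make the generating relation explicit as a one-step rewrite: write $\mathbf{u}\approx\mathbf{v}$ when there are words $\mathbf{a},\mathbf{b}$ and $x,y\in\OO$ with
\[
\{\mathbf{u},\mathbf{v}\}=\{\,\mathbf{a}\,\BD(x)\BD(0)\BD(y)\,\mathbf{b},\ \mathbf{a}\,\BD(x+y)\,\mathbf{b}\,\},
\]
so that $\approx$ is symmetric and $\sim$ is its reflexive--transitive closure. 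The key point is that a single rewrite takes place inside a subword and is insensitive to the surrounding letters: if $\mathbf{u}\approx\mathbf{v}$ then $\mathbf{s}\,\mathbf{u}\,\mathbf{t}\approx\mathbf{s}\,\mathbf{v}\,\mathbf{t}$ for all words $\mathbf{s},\mathbf{t}$, since the rewritten factor still occurs, now with left context $\mathbf{s}\mathbf{a}$ and right context $\mathbf{b}\mathbf{t}$. Applying this along a chain $\mathbf{u}_1=\mathbf{w}_0\approx\cdots\approx\mathbf{w}_m=\mathbf{u}_2$ shows that $\mathbf{u}_1\sim\mathbf{u}_2$ implies $\mathbf{s}\,\mathbf{u}_1\,\mathbf{t}\sim\mathbf{s}\,\mathbf{u}_2\,\mathbf{t}$. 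Taking $F_1\sim F_2$ and $G_1\sim G_2$ then gives $F_1\star G_1\sim F_2\star G_1\sim F_2\star G_2$, which is (a).

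Part (b) is immediate: (a) says $\sim$ is a congruence for $\star$, so $\overline{F}\star\overline{G}:=\overline{F\star G}$ is well defined, and associativity descends from the associativity of concatenation on $\FCF(\OO)$, making $\oFCF(\OO)$ a semigroup.

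For (c) and (d) I would compute directly with the relation \eqref{eq:Dsinglerelation}. First, set $e:=\oBD(0)^2$ and check it is a two-sided identity: for a word $\mathbf{w}=\BD(x_1)\cdots\BD(x_n)$, one application of \eqref{eq:Dsinglerelation} with $(x,y)=(0,x_1)$ gives $\BD(0)\BD(0)\BD(x_1)\cdots\sim\BD(x_1)\cdots$, so $e\star\overline{\mathbf{w}}=\overline{\mathbf{w}}$, and symmetrically with $(x,y)=(x_n,0)$ on the right. Next I would verify (d) by the two cancellations
\[
\BD(x)\BD(0)\BD(-x)\BD(0)\sim\BD(x+(-x))\BD(0)=\BD(0)\BD(0),
\]
obtained by rewriting the first three factors, and likewise $\BD(0)\BD(-x)\BD(0)\BD(x)\sim\BD(0)\BD(0)$ by rewriting the last three; these exhibit $\oBD(0)\,\oBD(-x)\,\oBD(0)$ as a two-sided inverse of $\oBD(x)$. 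Finally, (c) follows because every class is a product of generators $\oBD(x_i)$, each invertible by (d), so the product of their inverses in reverse order inverts it; hence the monoid $\oFCF(\OO)$ is a group with identity $\lbarbrack 0,0\rbarbrack$.

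The only genuinely delicate step is (a): one must pin down that $\sim$ is the reflexive--transitive closure of the context-insensitive one-step rewrite and that concatenation carries single rewrites to single rewrites. After that everything is formal or a one-line calculation. One should also note that $\FCF(\OO)$ has no empty word, so the identity is the length-two class $\lbarbrack 0,0\rbarbrack$ rather than an empty product; consistency such as $e\star e=\oBD(0)^4\sim\oBD(0)^2=e$ is covered by the same relation.
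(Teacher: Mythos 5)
Your proof is correct and follows essentially the same route as the paper, which simply declares (a) ``immediate,'' verifies the identity by substituting $0$ into relation \eqref{eq:Dsinglerelation}, and exhibits the inverse via $\BD(x)\BD(0)\BD(-x)\BD(0)\sim\BD(0)^2$. The only difference is that you spell out the detail the paper suppresses --- that $\sim$ is the congruence generated by context-insensitive one-step rewrites, so concatenation descends to classes --- and you check the inverse on both sides; both are harmless elaborations, not a different argument.
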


\begin{proof}
\eqref{mush1} is immediate, and \eqref{mush2} is implied by \eqref{mush1}.\\
\eqref{mush3}:
Substituting $y = 0$ into relation \eqref{eq:Dsinglerelation} shows that 
the class of $\BD(0)^2$
is the identity in $\oFCF(\OO)$.  The equality
$$\BD(x)\BD(0)\BD(-x)\BD(0) = \BD(0)^2, \ x \in \OO,$$
exhibits the inverse $\overline{\BD}(x)^{-1}=
\overline{\BD}(0)\overline{\BD}(-x)\overline{\BD}(0)$.
\end{proof}


\begin{definition1}
\label{pork}
Define $\J,\BU(x),\BL(x)\in\FCF(\OO)$ for $x\in \OO$ by
$\J\colonequals \BD(0)$, $\BU(x)\colonequals \BD(x)\BD(0)$,
and $\BL(x)\colonequals \BD(0)\BD(x)$.
We consider the group $\Z/2\Z\ast\OO$,
where $\OO$ is viewed as an additive group and $j\in\Z/2\Z$ is a generator.
Let $\WF:\FCF(\OO)\rightarrow \Z/2\Z \ast \OO$ be the surjective
map of semigroups
\[
\WF:\BD(x)\mapsto xj\in \Z/2\Z\ast\OO.
\]
  In particular, note
that 
$\WF(\J)=j\in\Z/2\Z$, $\WF(\BU(x))=x\in\OO$,
and $\WF(\BL(x)))=jxj$.
\end{definition1}

\begin{proposition}
\label{prop:firstfreeprod}
The morphism of semigroups $\WF:\FCF(\OO)\rightarrow\Z/2\Z\ast\OO$
of Definition \textup{\ref{pork}} induces an isomorphism of 
groups $\oWF:\oFCF(\OO)\stackrel{\simeq}{\longrightarrow}\Z/2\Z\ast\OO$.
\end{proposition}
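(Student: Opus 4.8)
The plan is to produce an explicit two-sided inverse to $\oWF$, which is cleaner than arguing injectivity directly via a normal form. First I would check that $\WF$ is constant on $\sim$-classes, so that it descends to $\oWF$ on $\oFCF(\OO)$. This reduces to evaluating $\WF$ on both sides of the generating relation \eqref{eq:Dsinglerelation}: recalling that in $\Z/2\Z\ast\OO$ the generator $j$ satisfies $j^2=1$ and that juxtaposition of two elements of the factor $\OO$ is the additive operation, one computes $\WF(\BD(x)\BD(0)\BD(y)) = (xj)(j)(yj) = (x+y)j = \WF(\BD(x+y))$. Since $\oFCF(\OO)$ is a group by Proposition \ref{mush} and $\Z/2\Z\ast\OO$ is a group, the induced semigroup morphism $\oWF$ is automatically a group homomorphism, and it is surjective because $j=\WF(\J)$ and $x=\WF(\BU(x))$ generate the target.

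Next I would build the candidate inverse $\phi\colon\Z/2\Z\ast\OO\to\oFCF(\OO)$ using the universal property of the free product, for which it suffices to define a homomorphism on each free factor. On the $\Z/2\Z$ factor I send the generator to $\oBD(0)$; this is well defined because $\oBD(0)^2$ is the identity of $\oFCF(\OO)$ by Proposition \ref{mush}\eqref{mush3}, so the image has order dividing $2$. On the $\OO$ factor I send $x\mapsto\oBU(x)=\oBD(x)\oBD(0)$; this is a homomorphism of additive groups, since the relation \eqref{eq:Dsinglerelation} gives $\BU(x)\star\BU(y)=\BD(x)\BD(0)\BD(y)\BD(0)\sim\BD(x+y)\BD(0)=\BU(x+y)$, and $\oBU(0)=\oBD(0)^2$ is the identity. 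These two homomorphisms assemble into $\phi$.

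Finally I would verify that $\phi$ and $\oWF$ are mutually inverse by checking both composites on generating sets. For $\oWF\circ\phi$: it sends the generator $j$ to $\oBD(0)$ and back to $j$, and it sends $x\mapsto\oBU(x)\mapsto(xj)(j)=x$, so it is the identity on the generators of $\Z/2\Z\ast\OO$. For $\phi\circ\oWF$: on the generators $\oBD(x)$ of $\oFCF(\OO)$ one has $\oWF(\oBD(x))=xj$ and then $\phi(xj)=\oBU(x)\,\oBD(0)=\oBD(x)\oBD(0)^2=\oBD(x)$, again using that $\oBD(0)^2$ is the identity. As both composites agree with the identity on generating sets and all maps in sight are group homomorphisms, they equal the respective identity maps, so $\oWF$ is an isomorphism with inverse $\phi$.

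The only genuine content is the injectivity of $\oWF$, i.e., that no two inequivalent words become equal in $\Z/2\Z\ast\OO$; I expect this to be entirely absorbed into the well-definedness of $\phi$. The sole facts needed to construct $\phi$ are the order-$2$ relation $\oBD(0)^2=\mathrm{id}$ and the additivity $\oBU(x)\oBU(y)=\oBU(x+y)$, and both are immediate consequences of the single defining relation \eqref{eq:Dsinglerelation} together with Proposition \ref{mush}. Thus once the inverse is written down, injectivity comes for free, and the main obstacle dissolves into the bookkeeping of identifying the right generators ($\oBD(0)$ for the $\Z/2\Z$ factor, $\oBU(x)$ for the $\OO$ factor) on which to define $\phi$.
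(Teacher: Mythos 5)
Your proposal is correct and follows essentially the same route as the paper: check that $\WF$ respects the relation \eqref{eq:Dsinglerelation} so that it descends to $\oWF$, then exhibit the inverse by sending $j\mapsto\oJ$ and $x\mapsto\oBU(x)$. You merely spell out in more detail the well-definedness of the inverse on each free factor (the order-$2$ check for $\oJ$ and the additivity of $x\mapsto\oBU(x)$) and the verification that the two composites are the identity on generators, all of which the paper leaves as a ``verify''.
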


\begin{proof}
Verify that $\WF(\BD(x)\BD(0)\BD(y))=\WF(\BD(x+y))$ for $x,y\in\OO$.
Hence $\WF$ induces a well-defined map on equivalence
classes $\oFCF(\OO)=\FCF(\OO)/\sim$ by the Definition \ref{muster}
of  $\sim$. To see that $\oWF$ is an isomorphism, verify that
its inverse is given by
\[
\oWF^{\,-1}:\Z/2\Z\ast\OO\longrightarrow \oFCF(\OO)\quad\text{with}\quad
\oWF^{\,-1}(j)=\oJ\text{ and } \oWF^{\,-1}(x)=\oBU(x)\text{ for }x\in\OO.\qedhere
\]
\end{proof}

\begin{remark1}
\label{feast}
For $F=[c_1,\ldots, c_n]\in\FCF(\OO)$, set 
$F^\ast\colonequals [0,-c_n,\ldots, -c_1,0]$. 
Observe that
 $\overline{F^\ast}=
\overline{F}^{\,-1}\in
\oFCF(\OO)$\textup{:}
$$
\lbarbrack c_1,\ldots, c_n\rbarbrack\star \lbarbrack 0, -c_n, \ldots,
-c_1,0\rbarbrack 
=\lbarbrack c_1,\ldots, c_n\rbarbrack \lbarbrack 0, -c_n, \ldots,
-c_1,0\rbarbrack 
=\lbarbrack 0,0\rbarbrack . $$
This formula for $\overline{F}^{\,-1}$ also follows from
Proposition \ref{mush}\eqref{mush4}.
\end{remark1}

\begin{definition1}
\label{stunt}
The finite continued fraction $F=[c_1,\ldots ,c_n]$ is \textsf{reduced} if
it has no interior zeros:
$c_i\ne 0$ for $2\leq i\leq n-1$.
\end{definition1}

\begin{proposition}
\label{prop:reducedFCF}
For $F \in \FCF(\OO)$, there is a unique reduced $F_\mathrm{red}\in \FCF(\OO)$
such that $F_\mathrm{red} \sim F.$
\end{proposition}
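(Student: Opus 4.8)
The plan is to establish existence by a terminating reduction procedure and uniqueness by transporting the equivalence into the free product $\Z/2\Z\ast\OO$ through the isomorphism $\oWF$ of Proposition~\ref{prop:firstfreeprod}.

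For existence, suppose $F=[c_1,\ldots,c_n]$ has an interior zero $c_i=0$ with $2\le i\le n-1$. Then the neighbors $c_{i-1}$ and $c_{i+1}$ are present, and applying relation~\eqref{eq:Dsinglerelation} in the form $\BD(c_{i-1})\BD(0)\BD(c_{i+1})=\BD(c_{i-1}+c_{i+1})$ replaces $F$ by an $\sim$-equivalent FCF of length $n-2$. Since length is a nonnegative integer that strictly decreases at every such step, the process halts after finitely many steps at an FCF with no interior zeros, that is, at a reduced $F_{\mathrm{red}}\sim F$. (FCFs of length at most $2$ are reduced by definition, so there is nothing to do in those cases.)

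For uniqueness, let $F_1,F_2$ be reduced with $F_1\sim F_2$, so that $\overline{F_1}=\overline{F_2}$ and hence $\WF(F_1)=\oWF(\overline{F_1})=\oWF(\overline{F_2})=\WF(F_2)$ in $\Z/2\Z\ast\OO$ by Proposition~\ref{prop:firstfreeprod}. It therefore suffices to show that $\WF$ is injective on the set of reduced FCFs, and for this I would recover $F$ from its image. Using $\WF(\BD(x))=xj$, and in particular $\WF(\BD(0))=j$, one has
\[
\WF(F)=c_1j\,c_2j\cdots c_nj\in\Z/2\Z\ast\OO.
\]
Because $F$ is reduced, the interior entries $c_2,\ldots,c_{n-1}$ are nonzero, so when this word is put into the normal form of the free product the only simplifications — replacing $0\cdot j$ by $j$ and cancelling $j\cdot j$ — occur at the two ends, and are controlled entirely by whether $c_1=0$ and whether $c_n=0$. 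A short case analysis then shows that the resulting normal form determines whether $c_1=0$, whether $c_n=0$, and the values of all the $c_i$; thus $F$ is reconstructed from $\WF(F)$, the map $\WF$ is injective on reduced FCFs, and the reduced representative is unique.

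The substantive care is confined to the endpoint bookkeeping and to the degenerate short cases — lengths $1$ and $2$, and especially $n=1$, where $c_1$ is at once the first and the last entry and the clean ``shape'' rule fails. I would treat these directly, checking for instance that $[0]\mapsto j$, that $[x,0]\mapsto x$ for $x\neq0$, and that $[0,0]\mapsto e$ (consistent with the identity of $\oFCF(\OO)$ in Proposition~\ref{mush}\eqref{mush3}) are pairwise distinct and distinct from all longer normal forms. A self-contained alternative to the free-product argument is to note that the rewriting rule $\BD(x)\BD(0)\BD(y)\to\BD(x+y)$ is terminating and to verify local confluence at its two critical pairs, coming from the overlaps in $\BD(x)\BD(0)\BD(z)\BD(0)\BD(y)$ and in $\BD(x)\BD(0)\BD(0)\BD(y)$ (which resolve unambiguously to $\BD(x+y+z)$ and to $\BD(x)\BD(y)$, respectively); Newman's lemma then yields unique normal forms.
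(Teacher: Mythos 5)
Your proof is correct and follows essentially the same route as the paper: existence by iteratively applying relation~\eqref{eq:Dsinglerelation} to delete interior zeros, and uniqueness by pushing reduced words through $\oWF$ into the free product $\Z/2\Z\ast\OO$ and comparing normal forms. Your treatment of the endpoint cancellations (when $c_1$ or $c_n$ is zero) is in fact slightly more careful than the paper's, which asserts the images are ``literally expressed without spurious insertions of the identity''; the Newman's-lemma alternative you sketch is a valid independent route but is not needed.
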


\begin{proof}
Given $F \in \FCF(\OO)$, iteratively apply relation \eqref{eq:Dsinglerelation}
to remove interior zeros to produce a reduced $F_\textrm{red} \sim F$.

On the other hand, suppose $F_\textrm{red}, F'_\textrm{red}\in\FCF(\OO)$
are both reduced and $\overline{F}_\textrm{red}=\overline{F'}_\textrm{red}$.
Then $\oWF(\overline{F}_\textrm{red})=
\oWF(\overline{F'}_\textrm{red})\in\Z/2\Z\ast\OO$.
Since they are reduced, both $\overline{F}_\textrm{red}$ and 
$\overline{F'}_\textrm{red}$ map under $\oWF$ 
to words in $\oFCF(\OO) \cong \Z / 2\Z \ast \OO$,
each literally expressed without spurious insertions of the identity, and they are equal.
Since $\Z / 2\Z \ast \OO$ is a free product, these words are the same and
$F'_\textrm{red} = F_\textrm{red}$.
\end{proof}

\begin{definition1}
\label{def:FCFnormalform}
For any 
 $\overline{F}\in\oFCF(\OO)$, the \textsf{normal form} of $\overline{F}$ is
the unique reduced representative $F_\textrm{red} \in \FCF(\OO)$
of the equivalence class $\overline{F}$.
\end{definition1}

\begin{remark1}
\label{gifted}
Two elements $\overline{F}, \overline{F}'\in\oFCF(\OO)$ are equal if and only if they have
the same normal forms in $\FCF(\OO)$: 
$$\overline{F}=\overline{F}'\Longleftrightarrow F_\textrm{red}=F'_\textrm{red}\in
\FCF(\OO).$$

In practice one computes $F_{\mathrm{red}}$ for $F\in\FCF(\OO)$ by
the algorithm of repeatedly applying relation \eqref{eq:Dsinglerelation}
until all interior zeros are eliminated.
For example, consider 
$$F= [0,-2,0,2,0,3,0,5]\in\FCF(\Z).$$
We have
$$
F=[0,-2,0,2,0,3,0,5]\sim [0,0,0,3,0,5]\sim[0,3,0,5]\sim[0,8],
$$
and hence $F_{\mathrm{red}}=[0,8]\in\FCF(\Z)$ with
$\overline{F}=\lbarbrack 0,-2,0,2,0,3,0,5\rbarbrack =
\lbarbrack 0,8\rbarbrack =\overline{F}_\textrm{red}\in\oFCF(\Z)$.
\end{remark1}

\subsection{Mapping finite continued fractions to matrices}
\label{subsec:FCF2matrix}
Let 
\begin{equation*}
\SLTpm(\OO) = \{g\in \GLT(\OO)\mid \det(g)=\pm 1\}.
\end{equation*}
The kernel of the determinant map, $\SLT(\OO)$, is normal of index 2 in $\SLTpm(\OO)$.

\begin{proposition}
\label{prop:mapSLT}
The map 
$\MF:\FCF(\OO)\rightarrow \SLTpm(\OO)$ given by
\begin{equation}
\label{eq:FCFtoSLT}
\MF([c_1,\ldots ,c_n])=D(c_1)\cdots D(c_n)\quad\text{so that}\quad
\MF(\BD(x)) = D(x)
\end{equation}
induces a well-defined homomorphism of groups
$\oMF:\oFCF(\OO)\rightarrow \SLTpm(\OO)$.
We therefore have a homomorphism of groups
$\BM\colonequals \overline{M}\circ\oWF^{\,-1}:\Z/2\Z\ast\OO\rightarrow \SLTpm(\OO)$.
\end{proposition}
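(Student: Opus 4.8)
The plan is to construct $\oMF$ in stages: first check that the unreduced map $\MF$ already lands in $\SLTpm(\OO)$ and is a homomorphism of semigroups for $\star$, then show it is constant on $\sim$-classes, and finally promote the induced map to a group homomorphism and compose with $\oWF^{\,-1}$.

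First I would verify that $\MF$ is well defined as a map into $\SLTpm(\OO)$. Each generator $D(c_i)$ has entries in $\OO$ (since $0,1\in\OO$) and determinant $-1$, so $\MF([c_1,\ldots,c_n])=D(c_1)\cdots D(c_n)$ lies in $\GLT(\OO)$ with determinant $(-1)^n=\pm1$, hence in $\SLTpm(\OO)$. By its very definition on concatenated words, $\MF(F\star G)=\MF(F)\MF(G)$, so $\MF$ is a homomorphism of semigroups from $(\FCF(\OO),\star)$ to $(\SLTpm(\OO),\cdot)$.

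The key step is to show that $\MF$ respects the defining relation \eqref{eq:Dsinglerelation}, i.e. $\MF(\BD(x)\BD(0)\BD(y))=\MF(\BD(x+y))$ for all $x,y\in\OO$. This reduces to the single matrix identity $D(x)D(0)D(y)=D(x+y)$, which I would confirm by direct multiplication: $D(x)D(0)=\begin{bmatrix}1 & x \\ 0 & 1\end{bmatrix}$, and right-multiplying by $D(y)$ gives $\begin{bmatrix}x+y & 1 \\ 1 & 0\end{bmatrix}=D(x+y)$. Because $\sim$ is the congruence generated by these relations (compatibility with $\star$ being Proposition \ref{mush}\eqref{mush1}) and $\MF$ is a semigroup homomorphism, applying the rewrite inside any word $u\cdot\BD(x)\BD(0)\BD(y)\cdot v$ leaves $\MF(u)\,D(x)D(0)D(y)\,\MF(v)=\MF(u)\,D(x+y)\,\MF(v)$ unchanged; iterating and using transitivity shows $F\sim G\Rightarrow \MF(F)=\MF(G)$. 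Thus $\MF$ factors as $\oMF:\oFCF(\OO)\to\SLTpm(\OO)$ with $\oMF(\overline F)=\MF(F)$.

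Finally I would upgrade $\oMF$ to a group homomorphism. Multiplicativity $\oMF(\overline F\star\overline G)=\oMF(\overline F)\,\oMF(\overline G)$ descends from that of $\MF$. Since $\oFCF(\OO)$ is a group (Proposition \ref{mush}\eqref{mush3}) with identity $\overline{\BD}(0)^2$, a multiplicative map into the group $\SLTpm(\OO)$ automatically preserves the identity --- indeed $\oMF(\overline{\BD}(0)^2)=D(0)^2=\Idd$ --- and hence inverses, so $\oMF$ is a group homomorphism. The last assertion is then immediate: $\oWF^{\,-1}:\Z/2\Z\ast\OO\to\oFCF(\OO)$ is a group isomorphism by Proposition \ref{prop:firstfreeprod}, so $\BM=\oMF\circ\oWF^{\,-1}$ is a composite of group homomorphisms. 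There is no real obstacle here; the only computation of substance is the $2\times2$ identity $D(x)D(0)D(y)=D(x+y)$, which is exactly the matrix shadow of the relation $\BD(x)\BD(0)\BD(y)=\BD(x+y)$ defining $\sim$.
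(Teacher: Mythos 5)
Your proof is correct and follows the same route as the paper: the whole content is the matrix identity $D(x)D(0)D(y)=D(x+y)$, which shows $\MF$ respects the defining relation \eqref{eq:Dsinglerelation}, after which everything descends formally. The paper's proof is just a terser version of yours, omitting the routine verifications (determinant $(-1)^n$, semigroup multiplicativity, preservation of identity and inverses) that you spell out.
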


\begin{proof}
We have specified the map $\MF$ on $\FCF(\OO)$.  We must show that $\MF$ respects
equivalence under  \eqref{eq:Dsinglerelation}.
By direct computation,
\[\MF(\BD(x)\BD(0)\BD(y)) = D(x)D(0)D(y) = D(x+y) = \MF(\BD(x+y)).\qedhere\]
\end{proof}

\begin{proposition}
\label{fest}
Suppose $F_1, F_2\in \FCF(\OO)$ and $F_1 \sim F_2$.
\begin{enumerate}
\item
\label{fest1}
The values
of $F_1$ and $F_2$ are equal: $\hat{F}_1=\hat{F}_2$.
\item
\label{fest2}
We have $\length(F_1)\equiv \length(F_2) \bmod 2$, so
there is a well-defined function  
\[
\olength:\oFCF(\OO)\rightarrow 
\Z/2\Z\quad\text{given by}\quad \olength(\overline{F})=\length(F)\pmod 2. 
\]
\end{enumerate}
\end{proposition}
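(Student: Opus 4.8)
The plan is to deduce both parts from the single fact, already established in Proposition \ref{prop:mapSLT}, that $\MF$ descends to a well-defined map $\oMF$ on $\oFCF(\OO)$; equivalently, $F_1 \sim F_2$ forces $\MF(F_1) = \MF(F_2)$. It then remains only to express the value $\hat F$ and the parity of $\length(F)$ in terms of the matrix $\MF(F)$.

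For part \eqref{fest1}, I would first record that the recursion defining $p_k, q_k$ shows that $\MF(F) = D(c_1)\cdots D(c_n)$ has first column $\vect{p_n}{q_n}$, so its action on $\infty \in \PP^1(\BC)$ by linear fractional transformation is $\MF(F)\cdot\infty = p_n/q_n = \CC_n(F) = \hat F$ (the convention $1/0 = \infty$ handling the case $q_n = 0$). Since $F_1 \sim F_2$ gives $\MF(F_1) = \MF(F_2)$, evaluating both matrices at $\infty$ yields $\hat F_1 = \hat F_2$.

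For part \eqref{fest2}, the key observation is that $\det D(x) = -1$ for every $x \in \OO$, so $\det \MF(F) = (-1)^{\length(F)}$. Again $F_1 \sim F_2$ gives $\MF(F_1) = \MF(F_2)$, hence $(-1)^{\length(F_1)} = (-1)^{\length(F_2)}$ and the two lengths agree mod $2$; this is precisely the well-definedness required to define $\olength$ on $\oFCF(\OO)$. Concretely, $\olength$ is the composite of $\oMF$ with $\det\colon\SLTpm(\OO)\to\{\pm1\}$ followed by the isomorphism $\{\pm1\}\cong\Z/2\Z$.

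I expect no serious obstacle: once the identities $\hat F = \MF(F)\cdot\infty$ and $\det\MF(F) = (-1)^{\length(F)}$ are in place, both claims are immediate from Proposition \ref{prop:mapSLT}, and the only real content is choosing these reformulations. As a self-contained alternative that bypasses the matrix map, I could argue directly from the generating relation \eqref{eq:Dsinglerelation}: one rewrite replaces the length-$3$ block $\BD(x)\BD(0)\BD(y)$ by the length-$1$ block $\BD(x+y)$, changing the total length by $\pm2$ and so preserving length mod $2$; as $\sim$ is the equivalence relation generated by such rewrites, part \eqref{fest2} follows, and part \eqref{fest1} follows from the corresponding invariance of the value under a single rewrite.
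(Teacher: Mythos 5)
Your proposal is correct and matches the paper's argument: part \eqref{fest1} is proved exactly as you do, by noting $\MF(F_1)=\MF(F_2)$ (Proposition \ref{prop:mapSLT}) and reading off the value as $\MF(F)_{11}/\MF(F)_{21}$, and your ``self-contained alternative'' for part \eqref{fest2} --- that a single application of \eqref{eq:Dsinglerelation} changes the word length by $2$ --- is verbatim the paper's one-line proof of that part. Your primary route for \eqref{fest2} via $\det D(x)=-1$ is an equally valid minor variant, which the paper in effect records separately afterwards as Proposition \ref{alin}.
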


\begin{proof}
\eqref{fest1}: Since $F_1\sim F_2$, 
$\MF(F_1)=\MF(F_2)\in \SLTpm(\OO)$ by Proposition \ref{prop:mapSLT}
We have that $\hat{F}_1=\hat{F}_2$, because $\hat{F}_1
=\MF(F_1)_{11}/\MF(F_1)_{21}$
and likewise for $\hat{F}_2$ by \eqref{eq:FCFtoSLT}.\\
\eqref{fest2}: Reformation of a word by relation \eqref{eq:Dsinglerelation}
maintains the parity of the word length. 
\end{proof}

\begin{proposition}
\label{alin}
Let 
$\chi:\oFCF(\OO)\rightarrow \langle\pm 1\rangle$ be 
$\det \circ\, \overline{M}$.
Let $\tilde{\chi}:\Z/2\Z\ast\OO\rightarrow \langle\pm 1\rangle$
be the group homomorpism to $\langle\pm 1\rangle$ that maps $j$ to $-1$ 
and is trivial on the second factor.
For $w\in \FCF(\OO)$, we have $\chi(\overline w) = (-1)^{\olength(\overline{w})}$
and $\tilde{\chi}\circ\oWF=\chi$.
\end{proposition}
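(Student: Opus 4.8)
The plan is to reduce everything to the single computation $\det D(x) = -1$ together with the fact that both maps in question are group homomorphisms, hence determined by their values on a generating set. First I would record that $\chi = \det\circ\,\oMF$ is a homomorphism $\oFCF(\OO)\to\langle\pm 1\rangle$, being the composite of the homomorphism $\oMF$ (Proposition \ref{prop:mapSLT}) with the determinant; likewise $\tilde\chi\circ\oWF$ is a homomorphism, since $\oWF$ is an isomorphism (Proposition \ref{prop:firstfreeprod}) and $\tilde\chi$ is a homomorphism by hypothesis. Since $D(x)$ has $\det D(x) = x\cdot 0 - 1\cdot 1 = -1$ for every $x\in\OO$, the value of either homomorphism on a single generator $\oBD(x)$ is forced to be $-1$.

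For the first identity I would use multiplicativity of the determinant together with the defining formula $\MF([c_1,\ldots,c_n]) = D(c_1)\cdots D(c_n)$ from \eqref{eq:FCFtoSLT}. For any representative $F=[c_1,\ldots,c_n]$ of $\overline w$ this gives
\[
\chi(\overline w) = \det\bigl(\oMF(\overline w)\bigr) = \det\bigl(\MF(F)\bigr)
= \prod_{i=1}^{n}\det D(c_i) = (-1)^{n} = (-1)^{\length(F)}.
\]
By Proposition \ref{fest}\eqref{fest2} the parity $\length(F)\bmod 2$ depends only on $\overline w$ and equals $\olength(\overline w)$, so the right-hand side is $(-1)^{\olength(\overline w)}$, as claimed; this also re-confirms that $\chi$ descends to $\oFCF(\OO)$, consistent with the well-definedness of $\oMF$.

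For the identity $\tilde\chi\circ\oWF = \chi$, I would compare the two homomorphisms on the generators $\oBD(x)$ of $\oFCF(\OO)$. These generate because every class is a $\star$-product $\overline F = \oBD(c_1)\star\cdots\star\oBD(c_n)$, since $F=\BD(c_1)\cdots\BD(c_n)$ under the word identification. On one side $\chi(\oBD(x)) = \det D(x) = -1$ by the computation above. On the other, $\oWF(\oBD(x)) = xj$, so $\tilde\chi(\oWF(\oBD(x))) = \tilde\chi(x)\tilde\chi(j) = 1\cdot(-1) = -1$, using that $\tilde\chi$ is trivial on $\OO$ and sends $j$ to $-1$. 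Thus the two homomorphisms agree on a generating set, hence coincide. Equivalently, one could check the identity on $\oJ = \oWF^{\,-1}(j)$ and $\oBU(x) = \oWF^{\,-1}(x)$, whose lengths $1$ and $2$ give $\chi$-values $-1$ and $1$, matching $\tilde\chi(j) = -1$ and $\tilde\chi(x) = 1$.

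The argument is entirely mechanical and there is no genuine obstacle. The only point requiring a moment's care is the appeal to Proposition \ref{fest}\eqref{fest2}: one must know that the relation \eqref{eq:Dsinglerelation} preserves length parity, so that $(-1)^{\length(F)}$ is in fact a well-defined function of $\overline w$ and the first identity is meaningful on $\oFCF(\OO)$ rather than merely on $\FCF(\OO)$.
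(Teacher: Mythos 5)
Your proposal is correct and follows essentially the same route as the paper: both arguments reduce to the computation $\det D(x)=-1$ and the fact that the two homomorphisms agree on the generators $\oBD(x)$, with the parity statement of Proposition \ref{fest}\eqref{fest2} guaranteeing that $(-1)^{\olength(\overline w)}$ is well defined. Your version merely spells out the multiplicativity-of-determinant step more explicitly than the paper does.
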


\begin{proof}
The maps $\chi$ and $\tilde{\chi}\circ\oWF$ are
 determined by their action on the generators 
$\overline{\BD}(x)$ of $\oFCF(\OO)$, namely
$\chi(\oBD(x)) = -1$ and $\tilde{\chi}\circ\oWF(\oBD(x))=\tilde{\chi}(xj)=-1$.
Hence $\chi=\tilde{\chi}\circ\WF$ and 
for $w \in \FCF(\OO)$,
 $\chi(\overline{w}) = (-1)^{\length(w)}= (-1)^{\olength(\overline{w})}  $.
\end{proof}

\begin{definition1}
\label{diana}
Set $\oFCF^{\,+}(\OO)=\ker(\olength)\triangleleft\oFCF(\OO)$.
\end{definition1}

\begin{proposition}
\label{prop:FCFplus}
Let $\tilde{\chi}:\Z/2\Z\ast\OO\rightarrow \langle \pm 1 \rangle$
be as in Proposition \textup{\ref{alin}}.  There is an isomorphism
$\psi:\OO\ast\OO\rightarrow \ker{\tilde{\chi}}\subseteq \Z/2\Z\ast\OO$
given by sending $x\in\OO$ in the first factor to $x\in\Z/2\Z\ast \OO$
and $y\in\OO$ in the second factor to $jyj\in\Z/2\Z\ast\OO$.
Hence $\oWF$ maps $\oFCF^{\,+}(\OO)$ isomorphically onto $\OO\ast\OO\simeq
\ker(\tilde{\chi})$.
\end{proposition}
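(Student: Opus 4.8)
The plan is to construct $\psi$ via the universal property of the free product and then verify it is a bijection onto $\ker\tilde\chi$ by working with normal forms in $\Z/2\Z\ast\OO$. First I would observe that the two prescribed assignments are each group homomorphisms into $\Z/2\Z\ast\OO$: the first is the canonical inclusion of the $\OO$-factor, and the second, $y\mapsto jyj$, is that inclusion followed by conjugation by $j$, so it satisfies $j(y+y')j=(jyj)(jy'j)$. Both images lie in $\ker\tilde\chi$, since an element of the first copy contains no occurrence of $j$ while an element $jyj$ of the second copy contains exactly two. By the universal property of $\OO\ast\OO$ these assemble into a homomorphism $\psi:\OO\ast\OO\to\ker\tilde\chi$.

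For surjectivity I would show $\ker\tilde\chi=\operatorname{im}\psi$ by induction on the number of occurrences of $j$ in the reduced word of $g\in\ker\tilde\chi$. Recall every element of $\Z/2\Z\ast\OO$ has a unique reduced form alternating between the single nontrivial element $j$ and nonzero elements of $\OO$, and $\tilde\chi(g)=(-1)^{\#\{j\text{'s}\}}$; thus $g\in\ker\tilde\chi$ exactly when this count is even. If the count is $0$ then $g\in\OO=\operatorname{im}\psi$. Otherwise, after left-multiplying by a suitable element of $\OO$ (harmless, since $\OO\subseteq\operatorname{im}\psi$) I may assume the reduced word begins $j\,x_1\,j\cdots$ with $x_1\in\OO\setminus\{0\}$; then $g=(jx_1j)\,g''$, where $g''$ is the reduced word obtained by deleting the initial $jx_1j$, which has two fewer $j$'s and still an even count. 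Since $jx_1j\in\operatorname{im}\psi$ and $g''\in\operatorname{im}\psi$ by induction, $g\in\operatorname{im}\psi$.

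For injectivity I would argue that $\psi$ carries nontrivial reduced words to nontrivial reduced words. Take a reduced word $c_1\cdots c_n$ in $\OO\ast\OO$, with the $c_i$ nonzero and alternating between the two copies. Under $\psi$ a first-copy letter becomes a single nonzero $\OO$-symbol while a second-copy letter becomes the block $j\,c_i\,j$; upon concatenating, every junction places an $\OO$-symbol next to a $j$, and the alternation in $\OO\ast\OO$ forbids two adjacent second-copy blocks (which alone could produce a cancelling $jj$) as well as two adjacent first-copy symbols (which alone could merge). Hence the image is already a reduced word of $\Z/2\Z\ast\OO$ of positive length, so it is nontrivial, giving $\ker\psi$ trivial. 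Combining the two parts yields the isomorphism $\psi:\OO\ast\OO\xrightarrow{\ \simeq\ }\ker\tilde\chi$.

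Finally, to deduce the statement about $\oWF$: by Proposition \ref{alin} we have $\chi=\tilde\chi\circ\oWF$ with $\chi(\overline w)=(-1)^{\olength(\overline w)}$, so $\oFCF^{\,+}(\OO)=\ker(\olength)=\ker\chi=\oWF^{-1}(\ker\tilde\chi)$; since $\oWF$ is an isomorphism (Proposition \ref{prop:firstfreeprod}) it restricts to an isomorphism $\oFCF^{\,+}(\OO)\xrightarrow{\simeq}\ker\tilde\chi\cong\OO\ast\OO$. I expect the main obstacle to be injectivity: one must invoke the normal form theorem for free products and check carefully that the alternation constraint in $\OO\ast\OO$ is precisely what prevents the $jj$-cancellations and $\OO$-merges that would otherwise destroy reducedness of the image.
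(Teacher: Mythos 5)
Your proof is correct and follows essentially the same route as the paper: the paper likewise establishes injectivity from the normal form theorem for free products and surjectivity by writing a word with an even number of $j$'s as $x_1jy_1jx_2jy_2j\cdots x_njy_nj$, i.e., as $\psi(x_1y_1\cdots x_ny_n)$. You simply spell out the reduced-word bookkeeping (and the induction for surjectivity) that the paper's two-sentence proof leaves implicit.
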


\begin{proof}
The definition of the free product $\Z/2\Z\ast\OO$ implies that $\psi$ is
injective.
A word in $\Z/2\Z\ast\OO$ with an even number of $j$'s is of the form
$x_1jy_1jx_2jy_2j\cdots x_njy_nj$ with possibly $x_1$, $y_n$, or both 0,
which is in the image of $\psi$.
\end{proof}

\begin{definition1}
\label{rent}
Set  
\[
W_{\sF^+}\colonequals \psi^{-1} \circ W_{\sF}|_{\FCF^+(\OO)}:
\FCF^+(\OO)\longrightarrow \OO\ast\OO.
\]
\end{definition1}

\begin{proposition}
\label{goat}
\begin{enumerate}
\item
\label{goat1}
The map $W_{\sF^+}$ of Definition \textup{\ref{rent}}
induces an isomorphism 
\[
\overline{W}_{\sF^+}:\oFCF^{\, +}(\OO)\stackrel{\simeq}{\longrightarrow} 
\OO\ast\OO.
\]
\item
\label{goat2}
Let $\J, \BU(x),  \BL(x)\in \FCF(\OO)$ for $x\in\OO$  be as in
Definition \textup{\ref{pork}}.
We have $\overline{W}_{\sF^+}(\oBU(x))=x$ in the first factor of $\OO\ast\OO$
and $\overline{W}_{\sF^+}(\oBL(y))=y$ in the second factor.
In particular $\{\oBU(x),\oBL(y)\mid x,y\in\OO\}$ generates
$\oFCF^{\, +}(\OO)$.
\end{enumerate}
\end{proposition}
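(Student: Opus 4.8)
The plan is to identify the induced map $\overline{W}_{\sF^+}$ with the composite of two isomorphisms that are already established, so that parts (a) and (b) reduce to bookkeeping. First I would record that $W_{\sF^+}$ is genuinely defined on $\FCF^+(\OO)$: for $F\in\FCF^+(\OO)$ the length $\length(F)$ is even, so by Proposition~\ref{alin} we have $\tilde{\chi}(\WF(F))=\chi(\overline F)=(-1)^{\olength(\overline F)}=1$, whence $\WF(F)\in\ker(\tilde{\chi})=\psi(\OO\ast\OO)$ and $\psi^{-1}$ may legitimately be applied.

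For part (a), since $\oWF$ respects $\sim$ by Proposition~\ref{prop:firstfreeprod} and $\psi^{-1}$ is a fixed group isomorphism, the composite $W_{\sF^+}=\psi^{-1}\circ\WF|_{\FCF^+(\OO)}$ is constant on $\sim$-classes and therefore descends to a map $\overline{W}_{\sF^+}$ on $\oFCF^{\,+}(\OO)$ satisfying $\overline{W}_{\sF^+}=\psi^{-1}\circ\oWF|_{\oFCF^{\,+}(\OO)}$. By Proposition~\ref{prop:FCFplus} the restriction of $\oWF$ carries $\oFCF^{\,+}(\OO)$ isomorphically onto $\ker(\tilde{\chi})$, and $\psi\colon\OO\ast\OO\to\ker(\tilde{\chi})$ is an isomorphism; a composite of isomorphisms is an isomorphism, which gives (a).

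For part (b), I would track the two families of generators through the maps. The words $\BU(x)=\BD(x)\BD(0)$ and $\BL(y)=\BD(0)\BD(y)$ have even length, hence lie in $\FCF^+(\OO)$ and define classes in $\oFCF^{\,+}(\OO)$. Definition~\ref{pork} gives $\WF(\BU(x))=x$ in the $\OO$-factor of $\Z/2\Z\ast\OO$ and $\WF(\BL(y))=jyj$, while the explicit formula for $\psi$ in Proposition~\ref{prop:FCFplus} shows $\psi^{-1}(x)=x$ in the first factor of $\OO\ast\OO$ and $\psi^{-1}(jyj)=y$ in the second. Composing, $\overline{W}_{\sF^+}(\oBU(x))=x$ in the first factor and $\overline{W}_{\sF^+}(\oBL(y))=y$ in the second. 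Finally, $\OO\ast\OO$ is generated by the images of its two free factors, which are precisely the elements just computed; since $\overline{W}_{\sF^+}$ is an isomorphism by (a), transporting this generating set back through $\overline{W}_{\sF^+}^{\,-1}$ shows that $\{\oBU(x),\oBL(y)\mid x,y\in\OO\}$ generates $\oFCF^{\,+}(\OO)$.

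The main point requiring care, rather than a genuine obstacle, is the correct reading of the two-factor conventions: one must match Definition~\ref{pork}'s values $\WF(\BU(x))=x$ and $\WF(\BL(y))=jyj$ against Proposition~\ref{prop:FCFplus}'s description of $\psi$ (first factor $\mapsto x$, second factor $\mapsto jyj$), so that $\oBU$ lands in the first factor and $\oBL$ in the second. All the substantive content, namely that $\oFCF^{\,+}(\OO)$ maps isomorphically to $\ker(\tilde{\chi})$ and that $\ker(\tilde{\chi})\cong\OO\ast\OO$, has already been proved, so no new hard step arises here.
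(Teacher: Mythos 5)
Your proposal is correct and follows essentially the same route as the paper, which simply notes that part (a) follows because the equivalence relation preserves the parity of the number of $\J$'s (i.e., $\oWF$ restricts to an isomorphism of $\oFCF^{\,+}(\OO)$ onto $\ker(\tilde{\chi})\cong\OO\ast\OO$ via Proposition \ref{prop:FCFplus}) and that part (b) is obtained by unwinding the definitions of $\WF$ and $\psi$ on $\oBU(x)$ and $\oBL(y)$. Your write-up just makes explicit the well-definedness and composition-of-isomorphisms steps that the paper leaves implicit.
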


\begin{proof}
The fact that the equivalence relation does not change the parity of the number
of $\J$'s implies \eqref{goat1}.
\eqref{goat2} follows from Proposition \ref{prop:FCFplus} after unwinding
the definitions.
\end{proof}

\begin{remark1}
\label{butter}
The map $\oMF$ of Proposition \ref{prop:mapSLT}
on $\oFCF(\OO)$ takes $\oJ$ to
$J \colonequals D(0) = 
\left[\begin{smallmatrix}0 & 1\\1&0\end{smallmatrix}\right].$
For $x\in\OO$,
\begin{equation*}
\oMF(\overline{\mathbf{D}}(x))=D(x)\colonequals \begin{bmatrix}
x & 1 \\ 1 & 0\end{bmatrix},\,\,
\oMF(\oBU(x)) = U(x)\colonequals \begin{bmatrix} 1 & x\\ 0 & 1 
\end{bmatrix},\,\, \oMF(\oBL(x))= L(x) \colonequals
\begin{bmatrix} 1 & 0 \\
x & 1\end{bmatrix}.
\end{equation*}
\end{remark1}

\begin{definition1}
\label{undo}
\begin{enumerate}
\item
The \textsf{elementary matrices} in $\SLT(\OO)$ are 
$\{L(c), U(c)\mid  c\in \OO\}$.
\item
The \textsf{elementary matrices} in $\SLTpm(\OO)$ are 
$\{L(c), U(c), J\mid c\in\OO\}$.
\item
The group $\BBE(\OO)$ is the subgroup of $\SLT(\OO)$ generated by elementary matrices in $\SLT(\OO)$.
\item
The group $\BBEpm(\OO)$ is the subgroup of $\SLT^\pm(\OO)$ generated by elementary matrices
in $\SLT^\pm(\OO)$.
\item
\label{undo1}
Set $\overline{M}^+=\overline{M}|_{\oFCF^{\,+}(\OO)}:\oFCF^{\,+}(\OO)\longrightarrow
\SLT(\OO)$.
\end{enumerate}
\end{definition1}

\begin{proposition}
\label{planet}
The image of $\oMF:\oFCF(\OO)\rightarrow \SLTpm(\OO)$ 
is $\BBEpm(\OO)$
and $\oMF(\oFCF^{\, +}(\OO))=\BBE(\OO)$.
\end{proposition}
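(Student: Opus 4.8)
The plan is to exploit that $\oMF$ is a group homomorphism (Proposition \ref{prop:mapSLT}), so its image is a subgroup of $\SLTpm(\OO)$ equal to the subgroup generated by the images of any generating set of the domain; the whole argument then reduces to matching these images against the elementary matrices of Definition \ref{undo}.

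First, for the statement about $\oFCF(\OO)$: since $\oFCF(\OO)$ is generated by the classes $\oBD(x)$, $x\in\OO$ (Definition \ref{pork}, Proposition \ref{prop:firstfreeprod}), the image $\oMF(\oFCF(\OO))$ is generated by the matrices $D(x)=\oMF(\oBD(x))$ recorded in Remark \ref{butter}. I would note the identity $D(x)=U(x)J$ in $\SLTpm(\OO)$, which exhibits each $D(x)$ as a product of elementary matrices and hence gives $\oMF(\oFCF(\OO))\subseteq\BBEpm(\OO)$; at the group level this is the relation $\oBD(x)=\oBU(x)\,\oJ$, using $\oJ=\oBD(0)$ and $\oBD(0)^2$ being the identity (Proposition \ref{mush}\eqref{mush3}). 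For the reverse inclusion I would read off directly from Remark \ref{butter} that $J=\oMF(\oJ)$, $U(x)=\oMF(\oBU(x))$, and $L(x)=\oMF(\oBL(x))$ all lie in the image; since $\{L(c),U(c),J\mid c\in\OO\}$ is the defining generating set of $\BBEpm(\OO)$, we obtain $\BBEpm(\OO)\subseteq\oMF(\oFCF(\OO))$, and the two inclusions give equality.

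For the second statement I would invoke Proposition \ref{goat}\eqref{goat2}, which asserts that $\{\oBU(x),\oBL(y)\mid x,y\in\OO\}$ generates $\oFCF^{\,+}(\OO)$. Applying the homomorphism $\oMF$ and Remark \ref{butter}, the image $\oMF(\oFCF^{\,+}(\OO))$ is generated by $\{U(x),L(y)\mid x,y\in\OO\}$, which is precisely the defining generating set of $\BBE(\OO)$. This yields $\oMF(\oFCF^{\,+}(\OO))=\BBE(\OO)$.

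There is no serious obstacle here: the argument is a generator-by-generator matching, and the only computation is verifying $D(x)=U(x)J$. The one point requiring minor care is that the reverse inclusion in the first statement must realize each of the three elementary generators $J$, $U(x)$, $L(x)$ individually as images—which Remark \ref{butter} supplies through $\oJ$, $\oBU(x)$, $\oBL(x)$—rather than relying only on the $D(x)$'s, whose images a priori generate a smaller-looking set.
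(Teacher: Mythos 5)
Your proof is correct and follows essentially the same route as the paper, which simply observes that $\oMF$ sends generators of $\oFCF(\OO)$ (resp.\ $\oFCF^{\,+}(\OO)$) to generators of $\BBEpm(\OO)$ (resp.\ $\BBE(\OO)$) via Remark \ref{butter}. Your version is slightly more careful than the paper's one-line argument, since you explicitly supply the identity $D(x)=U(x)J$ for the inclusion $\oMF(\oFCF(\OO))\subseteq\BBEpm(\OO)$ and separately realize $J$, $U(x)$, $L(x)$ as images for the reverse inclusion---a detail the paper leaves implicit.
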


\begin{proof}
By Remark \ref{butter},
$\oMF$ maps generators of 
$\oFCF(\OO)$ to generators of $\BBEpm(\OO)$
and generators of $\oFCF^{\, +}(\OO)$ to generators of $\BBE(\OO)$.
\end{proof}

\begin{corollary}
We have
$\BBEpm(\OO)=\langle D(x) : x\in\OO \rangle$ and
$\BBEpm(\OO)\cap\SLT(\OO)=\BBE(\OO).$
\end{corollary}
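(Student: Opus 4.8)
The plan is to deduce both equalities from Proposition \ref{planet}, which identifies $\BBEpm(\OO)$ and $\BBE(\OO)$ as the images under $\oMF$ of $\oFCF(\OO)$ and of $\oFCF^{\,+}(\OO)$ respectively, together with the determinant computation of Proposition \ref{alin}. For the first equality I would start from the observation that $\oFCF(\OO)$ is generated as a group by the classes $\oBD(x)$, $x\in\OO$, and that $\oMF(\oBD(x))=D(x)$ by Remark \ref{butter}. Hence the image $\oMF(\oFCF(\OO))$ is precisely $\langle D(x):x\in\OO\rangle$, and Proposition \ref{planet} identifies this image with $\BBEpm(\OO)$, giving $\BBEpm(\OO)=\langle D(x):x\in\OO\rangle$ at once. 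If one prefers an explicit check, the identities $J=D(0)$, $U(x)=D(x)J$, and $L(x)=JD(x)$ place all the elementary generators of $\SLTpm(\OO)$ inside $\langle D(x)\rangle$, while $D(x)=U(x)J$ (using $J^2=I$) gives the reverse containment.

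For the second equality, the containment $\BBE(\OO)\subseteq\BBEpm(\OO)\cap\SLT(\OO)$ is immediate from the definitions: each generator $L(c),U(c)$ of $\BBE(\OO)$ is simultaneously a generator of $\BBEpm(\OO)$ and an element of $\SLT(\OO)$. For the reverse containment I would take an arbitrary $g\in\BBEpm(\OO)\cap\SLT(\OO)$ and, invoking Proposition \ref{planet}, write $g=\oMF(\overline F)$ for some $\overline F\in\oFCF(\OO)$. Since $g\in\SLT(\OO)$ we have $\det g=1$, so Proposition \ref{alin} forces $(-1)^{\olength(\overline F)}=1$; thus $\overline F$ lies in $\oFCF^{\,+}(\OO)=\ker(\olength)$ of Definition \ref{diana}. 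The second assertion of Proposition \ref{planet} then gives $g=\oMF(\overline F)\in\oMF(\oFCF^{\,+}(\OO))=\BBE(\OO)$, completing the argument.

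The one place that needs care, and the step I expect to be the crux, is this reverse containment: a priori an element of $\BBEpm(\OO)\cap\SLT(\OO)$ is only known to be some word in the $\SLTpm(\OO)$-elementary matrices $L(c),U(c),J$ whose determinant happens to be $1$, and it is not obvious that such a word can be rewritten using the $\SLT(\OO)$-elementary generators alone. The content of Proposition \ref{alin}, namely that the determinant records the parity of the length, is exactly the bridge that turns the numerical condition $\det g=1$ into the membership $\overline F\in\oFCF^{\,+}(\OO)$; once that is in hand, the second half of Proposition \ref{planet} closes the gap and the rest is routine bookkeeping with the generators listed in Remark \ref{butter}.
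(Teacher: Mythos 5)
Your proposal is correct and follows essentially the same route as the paper: the first equality from Proposition \ref{planet} and the generation of $\oFCF(\OO)$ by the $\oBD(x)$, and the second by identifying $\oFCF^{\,+}(\OO)$ as the preimage of determinant $1$ via Proposition \ref{alin} and then applying the second half of Proposition \ref{planet}. Your version merely spells out the surjectivity/kernel bookkeeping that the paper leaves implicit.
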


\begin{proof}
Since $\oFCF(\OO)$ is generated by $\BD(x)$, the equality
$\BBEpm(\OO)=\langle D(x) : x\in\OO \rangle$ is an
immeditate consequence of Proposition \ref{planet}.
The kernel of the determinant map applied to $\BBEpm(\OO)$ is $\BBEpm(\OO)\cap\SLT(\OO)$.
The group $\oFCF^{\, +}(\OO)$ is the kernel of determinant composed with 
$\oMF$ by Proposition \ref{alin} and Definition \ref{diana}. 
Thus, the equality
$\BBEpm(\OO)\cap\SLT(\OO)=\BBE(\OO)$ follows.
\end{proof}

\begin{remark1}
Note that $\BBEpm(\OO)=\SLTpm(\OO)$ if and only if 
$\BBE(\OO)=\SLT(\OO)$.
For a number field $K$ with integers $\OO_K$, it is known
that $\BBE(\OO_K)=\SLT(\OO_K)$ unless $K=\mathbf{Q}(\sqrt{-D})$
where $D\ne 1,2,3,7,11$ and is squarefree.
Vaser\v{s}te\u{\i}n \cite{Vas} proved that $\BBE(\OO_K)=\SLT(\OO_K)$
if $K$ is {\em not} imaginary quadratic (see also \cite{Lie}), and Cohn
\cite[Thm.~6.1]{Cohn} had previously settled the imaginary quadratic case.

Nica \cite{Nica} provides a short constructive proof that for $\OO_K$
the ring of integers of an imaginary quadratic field $K$,
either $\BBE(\OO_K)=\SLT(\OO_K)$ or $\BBE(\OO_K)$
is an infinite index non-normal subgroup of $\SLT(\OO_K)$.
\end{remark1}

\subsection{The kernel of \texorpdfstring{\except{toc}{\boldmath
{$\overline{M}$}}\for{toc}{$\overline{M}$}}{M} and presentations of
 \texorpdfstring{\except{toc}{\boldmath{$\BBE(\OO)$}}\for{toc}
{$\BBE(\OO)$}}{E\texttwoinferior(O)}}
\label{rotten}

The map $\oMF:\oFCF(\OO)\rightarrow \SLTpm(\OO)$
of Proposition \ref{prop:mapSLT}
has image $\BBEpm(\OO)$ by 
Proposition \ref{planet}. It is a difficult
problem to find the kernel of $\oMF$.  We begin with the 
observation that $\oMF$ is not injective.

\begin{remark1}
\label{stress}
Let 
$$x, y, z \in \OO \quad\text{with}\quad w = -x - z - xyz.$$
Suppose $x, y, z \neq 0$, and there exists $a, b \in \OO$ such that
$xy = aw$, $yz = bw$.
Note $\OO$ is an integral domain since $\OO\subset\BC$, so $x,y\ne0$
implies $w\ne0$.
We also have $y+a+b+awb=0$ because
$w(y+a+b+awb)=wy+xy+yz+xy^2z=y(w+x+z+xyz)=0$.
This is sufficient to compute that

\begin{align*}
\overline{M}(\lbarbrack x,y,z, a, w, b\rbarbrack) &=
\left( D(x)D(y)D(z)\right)\left( D(a)D(w)D(b)\right) \\
&= \begin{bmatrix}-w & aw+1\\bw+1 & y\end{bmatrix}
\begin{bmatrix}-y & aw+1\\bw+1 & w\end{bmatrix}=1\in\SLT(\OO).
\end{align*}
Hence, for example we have the following nontrivial elements in the kernel of
$\oMF$:
\begin{align}
\nonumber\lbarbrack x, -x^{-1}, x&,x^{-1}, -x, x^{-1}\rbarbrack, \\
\label{sixterm4}\lbarbrack x, -4x^{-1}, x&, -2x^{-1}, 2x, -2x^{-1}\rbarbrack,\\ 
\label{sixterm3}\lbarbrack x, -3x^{-1}, x&, -3x^{-1}, x, -3x^{-1}\rbarbrack,\\
\label{sixterma}\lbarbrack x, \alpha, -\alpha^{-1}&,x\alpha^2, 
\alpha^{-1},-\alpha\rbarbrack,
\end{align}
provided $1/x,2/x,3/x,1/\alpha\in\OO$, respectively.

A ring $\OO \subset \BC$ has a norm $|\cdot|$ given by the usual absolute value.

\begin{definition1}
A ring $\OO \subset \BC$  is \textsf{discretely normed} \cite[p.~16]{Cohn}
when every unit $x \in \OO^\times$ has norm 1, and
every nonzero, nonunit $x \in \OO$ has norm at least $2$.
\end{definition1}

An imaginary quadratic number ring $\OO$ is discretely normed unless it has
discriminant
$$-D \in \{-3,-4,-7,-8,-11,-12\}.$$
The exceptions arise from elements $x$ such that $|x|=\sqrt{2}$ when
$-D=-4,-7,-8$ or $|x|=\sqrt{3}$ when $-D=-3,-8,-11,-12$, namely
\[
x=1+i,\, \frac{1+\sqrt{-7}}2,\, \sqrt{-2}, \,\quad\text{and}\quad
x=\frac{3+\sqrt{-3}}2,\, 1+\sqrt{-2},\, \frac{1+\sqrt{-11}}2,\, \sqrt{-3}
\]
up to signs.
These all give nontrivial kernel elements when substituted into
expressions \eqref{sixterm4} and \eqref{sixterm3}, respectively,
such that all partial quotients are nonzero nonunits.

If $\OO$ is any discretely normed ring
and $\lbarbrack c_1, c_2, \ldots, c_n\rbarbrack\in\ker \oMF$, 
then some $c_i$ is a unit or zero as shown by 
a modification of 
\cite[Lemma 5.1]{Cohn}.
\end{remark1}

In parallel with \cite[p. 27]{Cohn}, we define the following subgroup of
$\oFCF(\OO).$

\begin{definition1}
\label{def:ko}
\begin{enumerate}
\item
\label{def:bc}
For $a \in \OO^\times$, let $\Bc(a) = \lbarbrack a, -a^{-1}, a
\rbarbrack \in \oFCF(\OO).$
\item
\label{def:ko2}
Let $\BK(\OO)$ be the normal closure in $\oFCF(\OO)$ of the group generated by
\begin{equation}
\label{eq:c1square}
\Bc(1)^2,
\end{equation}
\begin{equation}
\label{eq:cproduct}
\Bc(a)\Bc(b)\Bc(a^{-1}b^{-1})\Bc(1), \ a,b\in \OO^\times,
\end{equation}
\begin{equation}
\label{eq:ccommute}
\oBD(x)\Bc(a)\oBD(0)\oBD(a^2x)\oBD(0)\Bc(-a), \ a\in\OO^\times, x\in\OO.
\end{equation}
\end{enumerate}
\end{definition1}
Calculating that
$\oMF(\Bc(a)) = \delta(a, -a^{-1})$ in the notation of Definition \ref{corned},
it is routinely verified that \eqref{eq:c1square} -- \eqref{eq:ccommute}
map to $I\in\SLTpm(\OO)$ under $\oMF$  and hence 
$\BK(\OO)$ is a normal subgroup of $\ker \oMF$.
Relations \eqref{eq:c1square} and \eqref{eq:cproduct}
give $\langle \Bc(a) : a \in \OO^\times \rangle \,\,\text{mod}\, \BK(\OO)\, \subset \oFCF(\OO)/\BK(\OO)$
the same commutative group structure as
$\langle \oMF(\Bc(a)) \rangle \subset \SLT^\pm(\OO)$.
In particular,
\[
\Bc(1)\Bc(-1)\,\,\text{mod}\,\BK(\OO)=\Bc(-1)\Bc(1)\,\,\text{mod}\, \BK(\OO)
\]
has order $2$ and maps by $\oMF$ to $-I$. 
Relation \eqref{eq:ccommute} implies
\[
\Bc(-a)\oBD(x)\Bc(a)=\oBD(-a^2x)\bmod\BK(\OO), 
\]
from which we
see that $\Bc(1)\Bc(-1)\bmod\BK(\OO)$ is central in
$\oFCF(\OO)/\BK(\OO)$.

\begin{definition1}
\label{soak}
Borrowing the terminology of \cite[p.~8]{Cohn}, we say a ring $\OO$ is
\textsf{universal} for $\BBE^\pm$ when $\oFCF(\OO) / \BK(\OO) \cong \BBE^\pm(\OO)$,
in other words, if $\ker \overline{M}=\BK(\OO)$.
\end{definition1}
\begin{remark1} 
\label{sunset}
Note that taking $a=-1, b=1$ in \eqref{eq:cproduct}
gives $\Bc(-1)\Bc(1)\Bc(-1)\Bc(1)\in\BK(\OO)$ for any $\OO$,
implying the same for its conjugate $\Bc(1)\Bc(-1)\Bc(1)\Bc(-1)$.
Set  
\begin{equation}
\label{east}
\mathbf{K'}(\OO)\colonequals \ncl  \langle \Bc(1)\Bc(-1)\Bc(1)\Bc(-1), 
\lbarbrack x, 1,-1,x,1,-1\rbarbrack, \lbarbrack x, -1,1,x,-1,1\rbarbrack, x\in\OO\rangle ,
\end{equation}
the normal closure being taken in $\oFCF(\OO)$.
Note that $\Bc(1)^2,\, \Bc(-1)^2\in
\BK'(\OO)$
by taking $x=1$, $x=-1$ in \eqref{east}.
  From \eqref{eq:cproduct}, \eqref{eq:ccommute}
we see that $\BK'(\OO)\lhd\, \BK(\OO)$.
When $\OO^\times =\langle \pm 1\rangle$,
we have $\BK'(\OO)=\BK(\OO)$.
\end{remark1}

\begin{proposition}
\label{forgot}
Let
\begin{equation}
\label{bitten}
\BK''(\Z)=\ncl\langle \Bc(1)^2, \Bc(-1)^2,\Bc(1)\Bc(-1)\Bc(1)\Bc(-1)\rangle,
\end{equation}
the normal closure being taken in $\oFCF(\Z)$.
Then $\BK(\Z)=\BK'(\Z)=\BK''(\Z)$ and
$\BK(\Z)$ is a normal subgroup of $\oFCF^{\,+}(\Z)$, equal to 
the normal closure of 
$\langle \Bc(1)^2, \Bc(-1)^2,\Bc(1)\Bc(-1)\Bc(1)\Bc(-1)\rangle$
in $\oFCF^{\, +}(\Z)$.
\end{proposition}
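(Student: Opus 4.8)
The plan is to move the whole computation into $\oFCF^{\,+}(\Z)$, which by Proposition~\ref{goat} is identified with the free group $\Z\ast\Z$ on $u\colonequals\oBU(1)$ and $\ell\colonequals\oBL(1)$, where $\oBU(x)=u^x$ and $\oBL(x)=\ell^x$. Since $\oBD(0)^2=\lbarbrack0,0\rbarbrack$ is the identity (Proposition~\ref{mush}), every even-length class factors as $\lbarbrack c_1,\dots,c_{2m}\rbarbrack=\oBU(c_1)\oBL(c_2)\cdots\oBU(c_{2m-1})\oBL(c_{2m})$, and I would first record the images
\[
\Bc(1)^2=u\ell^{-1}u\ell u^{-1}\ell,\quad \Bc(-1)^2=u^{-1}\ell u^{-1}\ell^{-1}u\ell^{-1},\quad \Bc(1)\Bc(-1)\Bc(1)\Bc(-1)=(u\ell^{-1})^6,
\]
together with, writing $R(x)=\lbarbrack x,1,-1,x,1,-1\rbarbrack$ and $S(x)=\lbarbrack x,-1,1,x,-1,1\rbarbrack$ for the remaining generators of $\BK'(\Z)$ in \eqref{east},
\[
R(x)=u^x\ell u^{-1}\ell^x u\ell^{-1},\qquad S(x)=u^x\ell^{-1}u\ell^x u^{-1}\ell.
\]
Now $\BK(\Z)=\BK'(\Z)$ is Remark~\ref{sunset} (as $\Z^\times=\langle\pm1\rangle$), so it remains to prove $\BK'(\Z)=\BK''(\Z)$. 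The inclusion $\BK''(\Z)\subseteq\BK'(\Z)$ is immediate once one observes the coincidences $\Bc(1)^2=S(1)$ and $\Bc(-1)^2=R(-1)$: all three generators of $\BK''(\Z)$ then appear literally among the generators of $\BK'(\Z)$.

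The crux is the reverse inclusion $\BK'(\Z)\subseteq\BK''(\Z)$, i.e.\ proving $R(x),S(x)\in\BK''(\Z)$ for all $x\in\Z$. The key is the uniform factorization $R(x)=u^xw^x$ with the single $x$-independent element $w\colonequals\ell u^{-1}\ell u\ell^{-1}=\ell(u^{-1}\ell u)\ell^{-1}$, valid because $w^x=\ell u^{-1}\ell^x u\ell^{-1}$; likewise $S(x)=u^xw'^x$ with $w'\colonequals\ell^{-1}u\ell u^{-1}\ell$. The base cases lie in $\BK''(\Z)$: $R(-1)=\Bc(-1)^2$ and $S(1)=uw'=\Bc(1)^2$ are generators, while $R(1)=uw=(u\ell u^{-1}\ell)\,\Bc(1)^2\,(u\ell u^{-1}\ell)^{-1}$ and $S(-1)$ are cyclic rotations, hence conjugates, of $\Bc(1)^2$ and $\Bc(-1)^2$ respectively. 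I would then run, for $x\ge1$, the telescoping identity
\[
u^xw^x=\bigl(u^{x-1}(uw)u^{1-x}\bigr)\bigl(u^{x-2}(uw)u^{2-x}\bigr)\cdots\bigl(u(uw)u^{-1}\bigr)(uw),
\]
which uses only $u^{-1}(uw)=w$, exhibiting $R(x)$ as a product of conjugates of $R(1)$; the mirror identity for $x\le-1$ writes $R(x)$ as a product of conjugates of $R(-1)$, and the same two identities handle $S(x)$. As $\BK''(\Z)$ is normal and contains $R(\pm1)$ and $S(\pm1)$, every $R(x),S(x)\in\BK''(\Z)$; together with the generator $\Bc(1)\Bc(-1)\Bc(1)\Bc(-1)$ this gives $\BK'(\Z)\subseteq\BK''(\Z)$, so $\BK(\Z)=\BK'(\Z)=\BK''(\Z)$.

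For the last assertion, the three generators of $\BK''(\Z)$ all have even length, so they lie in the normal subgroup $\oFCF^{\,+}(\Z)=\ker\olength$; hence $\BK(\Z)\subseteq\oFCF^{\,+}(\Z)$ and is normal there. To replace the normal closure in $\oFCF(\Z)$ by the one in $\oFCF^{\,+}(\Z)$, I would use that $\oFCF^{\,+}(\Z)$ has index $2$ with coset representative the involution $\oJ=\oBD(0)$, so any conjugator is $g'\oJ^{\varepsilon}$ with $g'\in\oFCF^{\,+}(\Z)$ and $\varepsilon\in\{0,1\}$; it therefore suffices to check that conjugation by $\oJ$ sends each generator into the normal closure taken inside $\oFCF^{\,+}(\Z)$. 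Since $\oJ\oBU(x)\oJ=\oBL(x)$, conjugation by $\oJ$ swaps $u\leftrightarrow\ell$, and a direct check gives $\oJ\,\Bc(1)^2\,\oJ=(\Bc(-1)^2)^{-1}$, $\oJ\,\Bc(-1)^2\,\oJ=(\Bc(1)^2)^{-1}$, and $\oJ(u\ell^{-1})^6\oJ=(u\ell^{-1})^{-6}$, each visibly in that normal closure. Thus conjugation by odd elements contributes nothing new, proving $\BK(\Z)$ equals the normal closure of $\langle\Bc(1)^2,\Bc(-1)^2,\Bc(1)\Bc(-1)\Bc(1)\Bc(-1)\rangle$ in $\oFCF^{\,+}(\Z)$. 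The main obstacle is the crux paragraph: spotting the uniform factorization $R(x)=u^xw^x$ that compresses the one-relation-per-$x$ family of \eqref{east} into a single conjugacy pattern is exactly the point where the singly-generated additive structure of $\Z$ (every $x$ built from $x=\pm1$) is essential; for a general ring this compression fails.
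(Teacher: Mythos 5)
Your proof is correct and follows essentially the same route as the paper's: both reduce the problem to showing $\lbarbrack x,1,-1,x,1,-1\rbarbrack$ and $\lbarbrack x,-1,1,x,-1,1\rbarbrack$ lie in $\BK''(\Z)$, both prove this by stepping $x\mapsto x\pm1$ from the base cases $x=\pm1$ (your telescoping of $u^xw^x$ into conjugates of $uw$ is exactly the closed form of the paper's inductive identity $\lbarbrack x\rbarbrack\Bc(1)^2\lbarbrack x\rbarbrack^{-1}\cdot R(x-1)=R(x)$), and both finish the passage from the normal closure in $\oFCF(\Z)$ to that in $\oFCF^{\,+}(\Z)$ with the identical index-two, conjugation-by-$\oJ$ computation. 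The only difference is presentational: you carry out the bookkeeping in the free-product coordinates $u=\oBU(1)$, $\ell=\oBL(1)$ of Proposition \ref{goat} rather than directly via the bracket relation \eqref{eq:Dsinglerelation}.
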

\begin{proof}
To show that $\BK''(\Z)=\BK'(\Z)=\BK(\Z)$, it suffices
to show that 
\begin{equation}
\label{dinner}
\langle  
\lbarbrack x, 1,-1,x,1,-1\rbarbrack, \lbarbrack x, -1,1,x,-1,1\rbarbrack, x\in\OO
 \rangle\subseteq
\BK''(\Z) 
\end{equation}
in light of Remark \ref{sunset}.
Verify that
\begin{equation*}
\label{sandy}
\Bc(1)^2\lbarbrack x\rbarbrack^{-1}\lbarbrack x-1,1,-1,x-1,1,-1\rbarbrack\lbarbrack x
\rbarbrack = \lbarbrack x\rbarbrack^{-1}\lbarbrack
x,1,-1,x,1,-1\rbarbrack\lbarbrack x\rbarbrack
\end{equation*}
for $x\in\OO$, and hence 
\begin{equation*}
\lbarbrack x-1,1,-1,x-1,1,-1\rbarbrack\in \BK''(\Z)\Longleftrightarrow
\lbarbrack
x,1,-1,x,1,-1\rbarbrack\in\BK''(\Z).
\end{equation*}
Since $\Bc(-1)^2=\lbarbrack -1,1,-1, -1,1,-1\rbarbrack\in\BK''(\Z)$, it follows by induction that
\[
\lbarbrack x,1,-1, x,1,-1\rbarbrack \in\BK''(\Z) \text{ for all $x\in\Z$.}
\]
Likewise the identity
\begin{equation*}
\label{sandy2}
\Bc(-1)^2\lbarbrack x\rbarbrack^{-1}\lbarbrack x+1,-1,1,x+1,-1,1\rbarbrack\lbarbrack x
\rbarbrack = \lbarbrack x\rbarbrack^{-1}\lbarbrack
x,-1,1,x,-1,1\rbarbrack\lbarbrack x\rbarbrack
\end{equation*}
implies that
\begin{equation*}
\lbarbrack x+1,-1,1,x+1,-1,1\rbarbrack\in \BK''(\Z)\Longleftrightarrow
\lbarbrack
x,-1,1,x,-1,1\rbarbrack\in\BK''(\Z).
\end{equation*}
Since $\lbarbrack 1,-1,1,1,-1,1\rbarbrack =\Bc(1)^2\in\BK''(\Z)$, induction
shows that $\lbarbrack x,-1,1,x,-1,1\rbarbrack\in\BK''(\Z)$ for all
$x\in\Z$, establishing \eqref{dinner}.

We have $\BK(\Z)\leq \oFCF^{\,+}(\Z)$ since each of its generators
in \eqref{bitten} has even length, satisfying Definition \ref{diana}.
Since $\oFCF^{\, +}(\Z)$ is a normal subgroup of index $2$ with the
nontrivial coset of $\oFCF(\Z)/\oFCF^{\, +}(\Z)$ containing
$\lbarbrack 0\rbarbrack =\lbarbrack 0\rbarbrack^{-1}$, to show
that the normal closure of 
$\langle \Bc(1)^2,\Bc(-1)^2,\Bc(1)\Bc(-1)\Bc(1)\Bc(-1)\rangle$ in
$\oFCF(\Z)$ is equal to its normal closure in $\oFCF^{\, +}(\Z)$, it suffices
to observe that
\begin{align*}
\lbarbrack 0\rbarbrack \Bc(-1)^2\lbarbrack 0\rbarbrack &= \Bc(1)^{-2}\\
\lbarbrack 0\rbarbrack \Bc(1)^2\lbarbrack 0\rbarbrack &= \Bc(-1)^{-2}\\
\lbarbrack 0\rbarbrack \Bc(1)\Bc(-1)\Bc(1)\Bc(-1)\lbarbrack 0\rbarbrack
&=[\Bc(1)\Bc(-1)\Bc(1)\Bc(-1)]^{-1}.  \qedhere
\end{align*}
\end{proof}
\begin{theorem}[Cohn]
\label{rain}
The map $\overline{M}:\oFCF(\Z)\rightarrow \BBE^\pm(\Z)
=\SLTpm(\Z)$ is surjective with kernel
$\BK(\Z)$. Likewise $\overline{M}^{\, +}:\oFCF^{\, +}(\Z)\rightarrow \BBE(\Z)$
is surjective with kernel $\BK(\Z)$. 
\end{theorem}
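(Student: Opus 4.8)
The plan is to reduce the entire statement to two explicit relation checks modulo $\BK(\Z)$. Surjectivity is immediate: Proposition~\ref{planet} identifies the images of $\oMF$ and $\overline{M}^{\,+}$ with $\BBEpm(\Z)$ and $\BBE(\Z)$, and for $\Z$ the Euclidean algorithm gives $\BBE(\Z)=\SLT(\Z)$, hence $\BBEpm(\Z)=\SLTpm(\Z)$ after adjoining $J$. The inclusion $\BK(\Z)\subseteq\ker\oMF$ has already been recorded. Finally, any $\overline F\in\ker\oMF$ has $\chi(\overline F)=\det\oMF(\overline F)=1$, so $\olength(\overline F)=0$ by Proposition~\ref{alin}; thus $\ker\oMF\subseteq\oFCF^{\,+}(\Z)$ and $\ker\oMF=\ker\overline{M}^{\,+}$. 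It therefore suffices to prove $\ker\overline{M}^{\,+}=\BK(\Z)$.

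Next I would use that, by Proposition~\ref{goat}, $\oFCF^{\,+}(\Z)$ is free on $u\colonequals\oBU(1)$ and $\ell\colonequals\oBL(1)$, with $\overline{M}^{\,+}(u)=U(1)$ and $\overline{M}^{\,+}(\ell)=L(1)$ in the notation of Remark~\ref{butter}. Put $S=\left[\begin{smallmatrix}0&-1\\1&0\end{smallmatrix}\right]$ and $T=U(1)$, so that $\SLT(\Z)=\langle S,T\mid S^4=I,\ (ST)^3=S^2\rangle$ (the amalgam $\Z/4\Z\ast_{\Z/2\Z}\Z/6\Z$). A direct matrix check gives $S=U(1)^{-1}L(1)U(1)^{-1}$ and $L(1)=U(1)\,S\,U(1)$. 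Hence, setting $s\colonequals u^{-1}\ell u^{-1}$, the pair $\{u,s\}$ again freely generates $\oFCF^{\,+}(\Z)$ (it generates, so it is a basis, free groups being Hopfian), and $\overline{M}^{\,+}$ sends it to $\{T,S\}$ with $\overline{M}^{\,+}(su)=ST$. Pulling the two defining relators of $\SLT(\Z)$ back along this basis yields
\[
\ker\overline{M}^{\,+}=\ncl\big\langle\, s^4,\ (su)^3 s^{-2}\,\big\rangle .
\]
Since $\BK(\Z)\subseteq\ker\overline{M}^{\,+}$ and $\BK(\Z)$ is normal, it now suffices to show $s^4\in\BK(\Z)$ and $(su)^3 s^{-2}\in\BK(\Z)$; equivalently, one is building the inverse homomorphism $\SLT(\Z)\to\oFCF^{\,+}(\Z)/\BK(\Z)$.

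The engine of the computation is the identity $s=\Bc(-1)\,\oJ$, which I would derive from $u=\oBD(1)\oJ$, $\ell=\oJ\oBD(1)$, $\oJ^2=1$, and Proposition~\ref{mush}\eqref{mush4}; the same facts give $\oJ\,\Bc(\mp1)\,\oJ=\Bc(\pm1)^{-1}$, whence $s^2=\Bc(-1)\Bc(1)^{-1}$. As $\Bc(1)^2,\Bc(-1)^2\in\BK(\Z)$, this yields $s^2\equiv\Bc(-1)\Bc(1)$ and so $s^4\equiv(\Bc(-1)\Bc(1))^2\pmod{\BK(\Z)}$; the latter element lies in $\BK(\Z)$ by Remark~\ref{sunset} (relation~\eqref{eq:cproduct} with $a=-1$, $b=1$). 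This disposes of the first relator.

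The hard part will be the second relator $(su)^3 s^{-2}$. Here $su=\Bc(-1)\,\oBD(-1)^{-1}$, and the right tool is the commutation relation drawn from~\eqref{eq:ccommute}, namely $\Bc(\mp1)\,\oBD(x)\,\Bc(\pm1)\equiv\oBD(-x)\pmod{\BK(\Z)}$, applied repeatedly together with the fact, recorded after Definition~\ref{def:ko}, that $\Bc(1)\Bc(-1)\bmod\BK(\Z)$ is central of order two; commuting the three copies of $\Bc(-1)$ past the intervening $\oBD$-factors should collapse $(su)^3$ to $s^2$, giving $(su)^3 s^{-2}\in\BK(\Z)$. Once both relators lie in $\BK(\Z)$ we get $\ker\overline{M}^{\,+}\subseteq\BK(\Z)$, hence equality, and together with surjectivity this proves both isomorphism statements. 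Here Proposition~\ref{forgot} is what cuts $\BK(\Z)$ down to the three generators of $\BK''(\Z)$, making these two checks finite and elementary rather than requiring Cohn's general discretely-normed reduction.
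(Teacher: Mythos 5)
Your proposal is correct, but it proves the theorem by a genuinely different route than the paper. The paper's proof is essentially a citation: it invokes Cohn's general theorem that discretely normed rings are universal for $\BBEpm$, notes that $\Z$ is discretely normed, and gets $\BBEpm(\Z)=\SLTpm(\Z)$ from the Euclidean algorithm. You instead give a direct, $\Z$-specific verification: after the (correct) reductions that $\ker\oMF\subseteq\oFCF^{\,+}(\Z)$ and that $\{u,s\}$ with $s=u^{-1}\ell u^{-1}$ is a free basis mapping to $\{T,S\}$, you pull back the two relators of the classical presentation $\langle S,T\mid S^4,\ (ST)^3S^{-2}\rangle$ and check they lie in $\BK(\Z)$. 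I verified the computations you leave implicit: $s=\Bc(-1)\oJ$, $s^2=\Bc(-1)\Bc(1)^{-1}$, so $s^4\equiv(\Bc(-1)\Bc(1))^2\in\BK(\Z)$ by relation \eqref{eq:cproduct}; and writing $su\equiv\Bc(-1)\oBD(-1)^{-1}$ and pushing the $\Bc(\pm1)$'s rightward with $\Bc(\mp1)\oBD(x)^{-1}=\oBD(-x)^{-1}\Bc(\pm1)\bmod\BK(\Z)$ and the centrality of $\Bc(1)\Bc(-1)$ collapses $(su)^3$ to $(\Bc(1)\Bc(-1))\cdot\oBD(1)^{-1}\oBD(-1)^{-1}\oBD(1)^{-1}\cdot\Bc(1)=\Bc(1)\Bc(-1)\equiv s^2$, as you predicted. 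Two caveats. First, your argument takes the amalgam presentation of $\SLT(\Z)$ as input, whereas the paper \emph{derives} that presentation from this theorem in Corollary \ref{storm}; to avoid circularity within the paper's logical structure you must source the presentation independently (it is classical, e.g.\ via ping-pong or the action on the tree), and then Corollary \ref{storm} becomes redundant rather than a consequence. Second, your closing appeal to Proposition \ref{forgot} is not actually needed: your relator checks use the generators \eqref{eq:cproduct} and \eqref{eq:ccommute} of $\BK(\Z)$ directly, not the reduced generating set $\BK''(\Z)$. What each approach buys: the paper's citation of Cohn covers all discretely normed rings at once and keeps the logical flow toward Corollary \ref{storm}; yours is elementary, self-contained apart from the classical presentation, and makes explicit exactly which relations in $\BK(\Z)$ are responsible for the kernel over $\Z$.
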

\begin{proof}
Cohn \cite[Thm.~5.2]{Cohn} shows that discretely normed rings are universal for 
$\BBE^\pm$ as in Definition \ref{soak}, and $\Z$ is discretely normed.
The statement for $\overline{M}$
implies that for $\overline{M}^{\, +}$.
Of course, it follows from the Euclidean algorithm that $\BBE^{\pm}(\Z)=
\SLTpm(\Z)$; see, for example,\cite[Chap.~12, Sect.~4]{Artin}.
\end{proof}

Knowing the kernel of $\overline{M}^{\,+}:\oFCF^+(\OO)\cong \OO\ast\OO
\rightarrow \BBE(\OO)\leq\SLT(\OO)$ gives a presentation of $\BBE(\OO)$,
and a presentation of $\SLT(\OO)$ if $\overline{M}^{\, +}$
is surjective. More generally, information
on $\ker(\overline{M}^{\, +})$ gives information on the generation of 
$\SLT(\OO)$ if $\BBE(\OO)=\SLT(\OO)$.
It is an interesting question whether information on $\ker(\overline{M}^{\,+})$,
such as knowing $\BK(\OO)\leq \ker(\overline{M}^{\, +})$, gives 
results on bounded generation in case $\OO^\times$ is infinite 
as in \cite{mrs} and \cite{jz}.

We show that the presentation of $\SLT(\Z)$ given by Theorem \ref{rain} gives
the familiar amalgamated product presentation.
\begin{corollary}
\label{storm}
We have $\SLT(\Z)\cong \Z/6\Z\ast_{\Z/2\Z}\Z/4\Z$.
\end{corollary}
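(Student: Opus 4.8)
The plan is to combine the kernel computation of Theorem \ref{rain} with the free-product description of $\oFCF^{\,+}(\Z)$ from Proposition \ref{goat}, and then to recognize the resulting finitely presented group as the amalgam by a single change of generators. By Theorem \ref{rain} the map $\overline{M}^{\,+}$ identifies $\SLT(\Z)$ with $\oFCF^{\,+}(\Z)/\BK(\Z)$, while Proposition \ref{goat} gives an isomorphism $\overline{W}_{\sF^+}$ of $\oFCF^{\,+}(\Z)$ onto the free product $\Z\ast\Z=\langle u\rangle\ast\langle\ell\rangle$, where $u=\overline{W}_{\sF^+}(\oBU(1))$ and $\ell=\overline{W}_{\sF^+}(\oBL(1))$. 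By Proposition \ref{forgot}, $\BK(\Z)$ is the normal closure in $\oFCF^{\,+}(\Z)$ of $\Bc(1)^2$, $\Bc(-1)^2$, and $\Bc(1)\Bc(-1)\Bc(1)\Bc(-1)$. Hence $\SLT(\Z)$ acquires the presentation $\langle u,\ell\mid R_1,R_2,R_3\rangle$, where $R_i$ is the image under $\overline{W}_{\sF^+}$ of the $i$-th generator of $\BK(\Z)$.

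The first substantive step is to rewrite each $R_i$ as an explicit word in $u,\ell$. Using $\oBD(x)=\oBU(x)\oJ=\oJ\,\oBL(x)$, the relation $\oJ^2=1$, and the swap $\oJ\,\oBU(x)\,\oJ=\oBL(x)$, one computes $\Bc(1)=\oBU(1)\oBL(-1)\oBU(1)\oJ$ and $\Bc(-1)=\oBU(-1)\oBL(1)\oBU(-1)\oJ$. Squaring and conjugating the trailing $\oJ$ through the remaining factors, then applying $\overline{W}_{\sF^+}$ (so $\oBU(x)\mapsto u^x$ and $\oBL(y)\mapsto\ell^y$), yields
\[
R_1=u\ell^{-1}u\ell u^{-1}\ell,\qquad R_2=u^{-1}\ell u^{-1}\ell^{-1}u\ell^{-1},\qquad R_3=(u\ell^{-1})^6,
\]
the last because $\Bc(1)\Bc(-1)\mapsto(u\ell^{-1})^3$.

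The key move is the change of free generators $P=u\ell^{-1}$, $Q=\ell u^{-1}\ell$. This is an automorphism of $\langle u\rangle\ast\langle\ell\rangle$, with inverse $u=P^2Q$, $\ell=PQ$, and it is motivated by the observation that $\overline{M}^{\,+}(P)=\left[\begin{smallmatrix}0&1\\-1&1\end{smallmatrix}\right]$ and $\overline{M}^{\,+}(Q)=\left[\begin{smallmatrix}0&-1\\1&0\end{smallmatrix}\right]$ are exactly the order-$6$ and order-$4$ matrices generating the two cyclic factors of the amalgam. Substituting collapses the relators to $R_3=P^6$, $R_1=P^3Q^2$, and $R_2=Q^{-1}P^{-3}Q^{-1}$; since $R_2$ lies in the normal closure of $R_1$ (both relations are equivalent to $P^3=Q^{-2}$), the normal closure of $\{R_1,R_2,R_3\}$ equals that of $\{P^3Q^2,\,P^6\}$. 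Thus $\SLT(\Z)\cong\langle P,Q\mid P^3Q^2,\,P^6\rangle$, and because $P^3Q^2=1$ together with $P^6=1$ force $Q^4=(Q^2)^2=P^{-6}=1$ and $Q^2=P^{-3}=P^3$, this is precisely the amalgam presentation $\langle P,Q\mid P^6,\,Q^4,\,P^3=Q^2\rangle\cong\Z/6\Z\ast_{\Z/2\Z}\Z/4\Z$, with $\langle P\rangle\cong\Z/6\Z$, $\langle Q\rangle\cong\Z/4\Z$, and shared subgroup $\langle P^3\rangle=\langle Q^2\rangle\cong\Z/2\Z$.

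The main obstacle is the explicit translation in the second step: one must track the alternating role of $\oJ$, which interchanges the two free factors, carefully enough to extract the correct relators, and then hit upon the generating change $(u,\ell)\mapsto(P,Q)$ that exhibits them as powers of $P$ and the single amalgamating relation. Once that substitution is found, everything else is mechanical: verifying that $\{P,Q\}$ is again a free basis (via $u=P^2Q$, $\ell=PQ$), simplifying the three relators, discarding the redundant $R_2$, and checking that the two surviving relations generate the full amalgam relation set.
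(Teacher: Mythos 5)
Your argument is correct and follows essentially the same route as the paper: both reduce via Theorem \ref{rain} and Proposition \ref{forgot} to presenting $\oFCF^{\,+}(\Z)/\BK(\Z)$ and then change free generators to elements mapping to the order-$6$ and order-$4$ matrices, and indeed your $P=u\ell^{-1}$ is exactly the paper's $\alpha=\lbarbrack 1,-1\rbarbrack$ while your $Q=\ell u^{-1}\ell$ is the paper's $\beta=\Bc(1)\lbarbrack 0\rbarbrack$ conjugated by $\oJ$. The only cosmetic differences are that you carry out the bookkeeping in the $\OO\ast\OO$ coordinates of Proposition \ref{goat} rather than directly with continued-fraction words, and you discard the relator $Q^{-1}P^{-3}Q^{-1}$ as a conjugate of $R_1^{-1}$ where the paper keeps all three relators and matches them with $\alpha^6$, $\alpha^3\beta^2$, $\beta^4$.
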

\begin{proof}
Note that by Remark \ref{feast}, $\Bc(-1)^{-1}=\lbarbrack 0,1,-1,1,0\rbarbrack$.
Let $\alpha=\lbarbrack 1,-1\rbarbrack\in\oFCF^+(\Z)$ and 
$\beta=\lbarbrack 1,-1,1,0\rbarbrack=\Bc(1)\lbarbrack 0\rbarbrack
\in\oFCF^+(\Z)$, so that
\begin{equation}
\label{red}
\alpha^3=\Bc(1)\Bc(-1),\, \beta^2=\Bc(1)\Bc(-1)^{-1},\,
\overline{M}(\alpha)=\begin{bmatrix}0 &1\\-1 & 1\end{bmatrix},\,
\text{ and }\overline{M}(\beta)=\begin{bmatrix}0 & 1 \\-1 & 0\end{bmatrix}.
\end{equation}

We now consider the group $\Z/6\Z\ast_{\Z/2\Z}\Z/4\Z$.  A presentation of
this group is the free group on two generators $\bm{\alpha}$
and $\bm{\beta}$ modulo the obvious relations:
\begin{equation}
\label{warm}
\Z/6\Z\ast_{\Z/2\Z}\Z/4\Z\cong \frac{\langle \bm{\alpha} \rangle\ast
\langle \bm{\beta}\rangle}{\ncl\langle \bm{\alpha}^6,
\bm{\alpha}^3\bm{\beta}^2,
\bm{\beta}^4\rangle},
\end{equation}
where $\ncl\langle \bm{\alpha}^6,
\bm{\alpha}^3\bm{\beta}^2,
\bm{\beta}^4\rangle$ is the normal closure 
of 
$\langle \bm{\alpha}^6,
\bm{\alpha}^3\bm{\beta}^2,
\bm{\beta}^4\rangle$ in $\langle \bm{\alpha} \rangle\ast
\langle \bm{\beta}\rangle$.

We now state and prove a series of claims:\\[.1in]
\textbf{Claim 1.} \textit{Set $\BK''(\Z)\colonequals
\ncl\langle \alpha^6, \alpha^3\beta^2,\beta^4\rangle$, the normal closure
of $\langle \alpha^6, \alpha^3\beta^2,\beta^4\rangle$ in $\oFCF^{\, +}(\Z)$.
Then $\BK''(\Z)$ is the normal closure of 
$\langle \Bc(1)^2, \Bc(-1)^2, \Bc(1)\Bc(-1)\Bc(1)\Bc(-1)
\rangle$ in $\oFCF^{\, +}(\Z)$.  So by Proposition \textup{\ref{forgot}},
$\BK''(\Z)=\BK(\Z)=\BK'(\Z)$.
} \\[.05in]
\textit{Proof.}
$\BK''(\Z)\subseteq \BK(\Z)$:
Using \eqref{red}, verify that $\overline{M}(\alpha^6)=
\overline{M}(\alpha^3\beta^2)=\overline{M}(\beta^4)=1\in\SLT(\Z)$.
Hence $\BK''(\Z)\subseteq \ker (\overline{M})
=\ker(\overline{M}^{\, +})=\BK(\Z)$ by Theorem \ref{rain}.\\
$\BK(\Z)\subseteq \BK''(\Z)$: It suffices to note the following:
\begin{align*}
\Bc(1)\Bc(-1)\Bc(1)\Bc(-1) &= \alpha^6,\\
\Bc(-1)^2  &=\beta^{-2}\alpha^3=(\alpha^3\beta^2)^{-1}\alpha^6,\\
\Bc(1)^2 &=\beta\alpha^{-3}\beta =\beta^{-1}[\beta^4(\alpha^3\beta^2)^{-1}]\beta\in\ncl\langle \alpha^6,\alpha^3\beta^2,\beta^4\rangle\equalscolon
\BK''(\Z),
\end{align*}
proving Claim 1.\\[.1in]
\textbf{Claim 2.} \textit{The elements $\alpha$ and $\beta$ generate
$\oFCF^{\, +}(\Z)$}.\\[.05in]
\textit{Proof.}
By Proposition \ref{goat}, $\oPCF^{\,+}(\Z)$ is a free group
on the two generators $\oBU(-1)=\oBU(1)^{-1}$ and $\oBL(-1)=\oBL(1)^{-1}$.
But
\begin{align*}
\oBU(-1)&=\lbarbrack -1,0\rbarbrack =\lbarbrack 0,0,-1,1,-1,0\rbarbrack
\lbarbrack 1,-1\rbarbrack = \beta^{-1}\alpha\\
\oBL(-1)&= \lbarbrack 0,-1\rbarbrack =\lbarbrack 0,0,-1,1,-1,0\rbarbrack
\lbarbrack 1,-1\rbarbrack \lbarbrack 1,-1\rbarbrack =\beta^{-1}\alpha^2,
\end{align*}
proving Claim 2.\\[.1in]
We can now conclude the proof of Corollary \ref{storm}.  By Theorem
\ref{rain} we have 
\begin{align*}
\SLT(\Z)&\cong \frac{\oFCF^{\, +}(\Z)}{\BK(\Z)}
\cong \frac{\langle\alpha\rangle \ast\langle \beta\rangle}
{\ncl\langle \alpha^6, \alpha^3\beta^2, \beta^4\rangle}
\quad\text{by Claims 1 and 2}\\
&\cong \Z/6\Z\ast_{\Z/2\Z}\Z/4\Z\quad\text{by \eqref{warm}}.\qedhere
\end{align*} 
\end{proof}

When $\OO$ is a discretely normed quadratic imaginary number ring,
Cohn shows $\BK(\OO)=\ker \overline{M}$ 
and, in \cite[Thm.~6.1]{Cohn},
$\BBEpm(\OO)\subset \SLTpm(\OO)$ is a proper containment.

\section{Periodic continued fractions}
\label{sec:PCF}

An infinite continued fraction $C$ is a formal expression $[c_1,c_2,\ldots]$
where the sequence of partial quotients does not terminate.
The infinite continued fraction $C=[c_1, c_2, \ldots]$ may
also be expressed as a nonterminating version of \eqref{contfrac}.
As in the finite case, the convergent $\CC_k(C)$ is the evaluation of 
$[c_1, c_2, \ldots, c_k].$
For $\MF$ as in \eqref{eq:FCFtoSLT},
set  $M_k(C)\colonequals \MF([c_1, c_2, \ldots, c_k])$
with  $M_0(C) =I$ so that $\CC_k(C)=M_k(C)_{11}/M_k(C)_{21}$.
The \textsf{value} (or \textsf{limit}) of $C$ is
\begin{equation*}
\hat{C}=\lim_{k\to\infty}\CC_k(C)
\end{equation*}
when the limit exists in $\PP^1(\BC)$, in which case 
$C$ \textsf{converges}.

A \textsf{periodic continued fraction} (PCF) $P$ is an
infinite continued fraction $[c_1,c_2,\ldots]$ together with a \textsf{type}
$(N,k)$ with $N\ge 0$, $k\ge 1$ such that $c_{n+k}=c_n$ for $n>N$.
We denote the sequence of partial quotients of the PCF $P$ by
\begin{equation}
\label{berate}
P=[b_1, \ldots, b_N, \overline{a_1, a_2, \ldots, a_k}\,].
\end{equation}
The natural number $k$ is the \textsf{period} of $P$; the 
\textsf{initial part} of $P$ in \eqref{berate}
is the FCF $\In(P)=[b_1,\ldots, b_N]$; and the \textsf{repeated part}
of $P$ is the FCF $\Rep(P)=[a_1, \ldots, a_k]$.
The PCF $P$ as in \eqref{berate} has Galois dual
\begin{equation}
\label{azure}
P^\ast \colonequals [b_1, \ldots, b_N,0,\overline{-a_k,\ldots, -a_1}\,];
\end{equation}
see, e.g., \cite[p.~384]{bej}.
For a ring $\OO\subseteq \BC$ containing 1,
say $P= [b_1,\ldots, b_N,\overline{a_1, a_2, \ldots, a_k}\,]$
is an $\OO$-$\PCF$ if $b_i, a_j\in\OO$
for $1\leq i\leq N$, $1\leq j\leq k$.
Let $\PCF(\OO)$ denote the set of all $\OO$-PCFs.  If $P\in\PCF(\OO)$, then
$P^\ast\in\PCF(\OO)$.

A PCF with type $(0,k)$ is purely periodic.
For brevity, we abbreviate purely periodic continued fraction as RCF, 
after the phrase \textsf{repeating continued fraction}. 
The set of RCFs which are $\OO$-PCFs is denoted $\RCF(\OO)$.
An \textsf{untyped periodic continued fraction} (UCF) $U$
is an infinite continued fraction which is periodic of some type $(N,k)$.
Notice that the type of a UCF is not uniquely determined:
A UCF $U$ with type $(N,k)$ also has type $(N',mk)$ for every $N'\ge N$
and every multiple $mk$ of $k$.

\begin{definition1}
\label{orange}
There are three equality relationships between a PCF $P$ of type $(N,k)$
and a PCF $P'$ of type $(N',k')$.
\begin{enumerate}
\item 
$P$ and $P'$ are \textsf{CF-equal} 
(written $=_{\textrm{CF}}$)
when $P$ and $P'$ are the same as UCFs.
\item 
\label{orange1}
$P$ and $P'$ are \textsf{k-equal} (written $=_{\textrm{k}}$) when $P$ and $P'$ are the same as UCFs
and $k=k'$.
\item 
$P$ and $P'$ are \textsf{equal} (written $=$) when $P$ and $P'$ are the same as PCFs, that is, $P=_{\textrm{CF}}P'$ and  $(N,k)=(N',k')$.
\end{enumerate}
\end{definition1}
\noindent For example, $[1,\overline{2}]=_{\textrm{CF}} [1,\overline{2,2}]$,
but
$[1,\overline{2}\,]\neq_{\textrm{k}} [1,\overline{2,2}\,]$. 
And $[1,\overline{2,1}\,]=_{\textrm{k}}
[1,2,\overline{1,2}\,]$, but
$[1,\overline{2,1}\,]\neq
[1,2,\overline{1,2}\,]$.

\subsection{An equivalence relation and group law on PCFs}
\label{sec:eqrelPCF}
We give an equivalence relation on $\PCF(\OO)$ and a group law
on the equivalence classes by means of the group law on $\oFCF(\OO)$.

\begin{definition1}
\label{def:PCFtoOFCF}
The map $\WP: \PCF(\OO)\rightarrow \Z/2\Z\ast\OO$ maps the
PCF $P$ with initial part $\In(P)$ and repeating part $\Rep(P)$ to
\begin{equation}
\label{teal}
\WP(P) = \WF(\In(P))\WF(\Rep(P))\WF(\In(P))^{-1}=
 \WF(\In(P))\WF(\Rep(P))\WF(\In(P)^\ast),
\end{equation}
where $\WF$ is as in Definition \ref{pork} and $\In(P)^\ast$
is as in Remark \ref{feast}.
\end{definition1}

The type $(N,k)$ is essential data attached to $P$ for computing 
the map $\WP$.
We also have maps
$\SF:\PCF(\OO)\rightarrow \FCF(\OO)$ and
$\SR:\FCF(\OO)\rightarrow \RCF(\OO)\subseteq \PCF(\OO)$ defined by
\begin{align}
\label{eq:FtoR}
\SF([b_1,\ldots, b_N,\overline{a_1,\ldots , a_k}\,]) & =\begin{cases}
[b_1,\ldots, b_N,a_1, \ldots, a_k,0,-b_N, \ldots, -b_1,0], \ N > 0\\
[a_1,\ldots , a_k],\ N=0.
\end{cases}\\
\label{eq:RtoF}
\SR([c_1,\ldots, c_n]) &= [\,\overline{c_1,\ldots, c_n}\,].
\end{align}
Note that 
$\SF\circ\SR$ is the identity on $\FCF(\OO)$ and
the map $\WP$ factors as 
\begin{equation}
\label{settle}
\WP=\WF\circ\SF.
\end{equation}

The maps $\SR$ and 
$\SF|_{\RCF(\OO)}$ give inverse bijections:
\begin{equation}
\label{rated}
\SF|_{\RCF(\OO)}:\RCF(\OO)\stackrel{\simeq}{\longrightarrow}\FCF(\OO)
\quad\text{and}\quad\SR:\FCF(\OO)\stackrel{\simeq}{\longrightarrow}
\RCF(\OO).
\end{equation}

The equivalence relation $\sim$ on $\FCF(\OO)$
defined in Section \ref{harbor}
induces an equivalence relation, also denoted $\sim$.
on $\PCF(\OO)$.

\begin{definition1}
\label{wealth}
For $P, P'\in\PCF(\OO)$, say
\begin{align*}
P\sim P'&\Longleftrightarrow \SF(P)\sim\SF(P')\Longleftrightarrow \WF(\SF(P))=\WF(\SF(P'))\\ \nonumber
&\Longleftrightarrow
\WP(P)=\WP(P')\in\Z/2\Z\ast\OO .
\end{align*}

\end{definition1}

\begin{remark1}
\label{wealth2}
For $F, F'\in\FCF(\OO)$ we have 
\begin{equation}
F\sim F'\Longleftrightarrow
\SR(F)\sim\SR(F')\text{ since  }\SF\left(\SR(F)\right)=F.
\end{equation}
\end{remark1}

\begin{proposition}
\label{desktop}
If $P, P'\in\PCF(\OO)$ and $P=_{\textrm{k}}P'$ as in
 Definition \textup{\ref{orange}\eqref{orange1}}, then $P\sim P'$.
\end{proposition}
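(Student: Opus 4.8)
The plan is to check the criterion $\WP(P)=\WP(P')$ of Definition~\ref{wealth}, working with the conjugation formula $\WP(P)=\WF(\In(P))\WF(\Rep(P))\WF(\In(P))^{-1}$ from Definition~\ref{def:PCFtoOFCF}. Since $P\sim P'$ is \emph{by definition} the equality $\WP(P)=\WP(P')$, the relation $\sim$ is transitive, so I would first reduce to adjacent types. Write the common underlying \textsf{UCF} as $[c_1,c_2,\ldots]$; if $P$ has type $(N,k)$ and $P'$ has type $(N',k)$ with $N\le N'$, then for every $i$ with $0\le i\le N'-N$ the pair $(N+i,k)$ is again a valid type of the same \textsf{UCF}, because the condition $c_{n+k}=c_n$ for $n>N$ passes to every larger cutoff. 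Chaining the one-step equivalences then yields $P\sim P'$, so it suffices to handle $N'=N+1$.

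For the adjacent case I would set $g=\WF(\In(P))$, $d=\WF(\BD(c_{N+1}))=c_{N+1}j\in\Z/2\Z\ast\OO$, and $s=\WF([c_{N+2},\ldots,c_{N+k}])$, with $s$ the identity when $k=1$ and $g$ the identity when $N=0$. As $\WF$ is a morphism of semigroups, $\WF(\Rep(P))=\WF([c_{N+1},\ldots,c_{N+k}])=ds$, so $\WP(P)=g(ds)g^{-1}$. Incrementing the cutoff moves $c_{N+1}$ into the initial part, giving $\WF(\In(P'))=gd$; and since type $(N,k)$ forces $c_{N+k+1}=c_{N+1}$, the repeated part $\Rep(P')=[c_{N+2},\ldots,c_{N+k},c_{N+1}]$ is the cyclic shift of $\Rep(P)$, whence $\WF(\Rep(P'))=sd$.

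The identity then closes by a telescoping conjugation,
\[
\WP(P')=(gd)(sd)(gd)^{-1}=gd\,sd\,d^{-1}g^{-1}=g(ds)g^{-1}=\WP(P),
\]
the cancellation $dd^{-1}=1$ using only that $\Z/2\Z\ast\OO$ is a group; by Definition~\ref{wealth} this is exactly $P\sim P'$.

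I expect the genuine care to be entirely combinatorial: confirming that moving the type boundary by one step is legitimate, that the periodicity identity $c_{N+k+1}=c_{N+1}$ needed to recognize the cyclic shift is precisely what the original type $(N,k)$ supplies, and that the degenerate cases $N=0$ (empty initial part, $g=1$) and $k=1$ (empty middle, $s=1$) are subsumed by the same formula. The algebraic core—the single cancellation $dd^{-1}=1$—is immediate once this notation is in place.
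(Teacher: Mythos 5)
Your proof is correct and follows essentially the same route as the paper's: reduce to the adjacent case $N'=N+1$ and iterate, then observe that incrementing the cutoff produces a cancelling pair. The only cosmetic difference is that you perform the cancellation in $\Z/2\Z\ast\OO$ via the conjugation formula for $\WP$ (the $dd^{-1}=1$ step), whereas the paper performs the equivalent cancellation in $\FCF(\OO)$ by applying relation \eqref{eq:Dsinglerelation} to the segment $a_1,0,-a_1$ inside $\SF(P')$; Definition \ref{wealth} makes these two criteria interchangeable.
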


\begin{proof}
First consider the case that $P$ is of type $(N,k)$ and $P'$
is of type $(N+1,k)$ with $P=_{\textrm{k}}P'$.  Then we must have
$$
P=[b_1,\ldots, b_N,\overline{a_1,\ldots, a_k}]\quad\text{and}\quad
P'=[b_1,\ldots, b_N, a_1,\overline{a_2, \ldots, a_k, a_1}].$$
We have
\begin{align*}
\SF(P)&=[b_1,\ldots, b_N, a_1, \ldots, a_k,0,-b_N,\ldots, -b_1,0]\text{ and}\\
\SF(P')&=[b_1,\dots, b_N,a_1,a_2,\ldots , a_k, a_1,0,-a_1,-b_N,\ldots, -b_1,0]
\end{align*}
with $\SF$ as in \eqref{eq:FtoR}.
But then $\SF(P)\sim \SF(P')$ by applying \eqref{eq:Dsinglerelation}.

The proposition then follows from this by iteration.
\end{proof}

\begin{definition1}
An RCF  $R=[\,\overline{a_1,\ldots a_n}\,]\in\RCF(\OO)$ 
is \textsf{reduced} when $\SF(R)\in\FCF(\OO)$
is reduced as in Definition \ref{stunt}, i.e., 
$a_i\neq 0$ for $2\leq i\leq n-1$.
\end{definition1}

\begin{remark1}
\label{ship}
Note that $F\in\FCF(\OO)$ is reduced if and only
if $\SR(F)\in\RCF(\OO)$ is reduced since $\SF\circ\SR(F)=F$.
\end{remark1}

\begin{proposition}
\label{prop:reducedPCF}
For $P\in\PCF(\OO)$, 
let $\SF(P)_\mathrm{red}\in\FCF(\OO)$ be as in Proposition 
\textup{\ref{prop:reducedFCF}}. Then 
the element $P_\mathrm{red} = \SR(\SF(P)_{\mathrm{red}})$ is
the unique reduced element of $\RCF(\OO)$ such that $P_\mathrm{red} \sim P.$
\end{proposition}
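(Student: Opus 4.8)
The plan is to transport the entire problem from $\PCF(\OO)$ and $\RCF(\OO)$ back into $\FCF(\OO)$ using the inverse bijections $\SF|_{\RCF(\OO)}$ and $\SR$ of \eqref{rated}, and then invoke Proposition \ref{prop:reducedFCF} directly. The three structural facts I will lean on are: $\SF\circ\SR$ is the identity on $\FCF(\OO)$; by definition an RCF is reduced exactly when its $\SF$-image is a reduced FCF (together with Remark \ref{ship}); and by Definition \ref{wealth} the relation $\sim$ on $\PCF(\OO)$ is pulled back along $\SF$ from $\sim$ on $\FCF(\OO)$.

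First I would check that $P_{\mathrm{red}}=\SR(\SF(P)_{\mathrm{red}})$ is reduced. Since $\SF(P)_{\mathrm{red}}$ is a reduced FCF by Proposition \ref{prop:reducedFCF}, Remark \ref{ship} gives that its image $\SR(\SF(P)_{\mathrm{red}})$ is a reduced element of $\RCF(\OO)$. Next I would verify $P_{\mathrm{red}}\sim P$. By Definition \ref{wealth} this amounts to $\SF(P_{\mathrm{red}})\sim\SF(P)$ in $\FCF(\OO)$. Here $\SF(P_{\mathrm{red}})=\SF(\SR(\SF(P)_{\mathrm{red}}))=\SF(P)_{\mathrm{red}}$ because $\SF\circ\SR$ is the identity, and $\SF(P)_{\mathrm{red}}\sim\SF(P)$ holds by Proposition \ref{prop:reducedFCF}; the equivalence follows.

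The step requiring the most care is uniqueness. Suppose $R\in\RCF(\OO)$ is reduced with $R\sim P$. Then $\SF(R)$ is a reduced FCF by the definition of a reduced RCF, and $\SF(R)\sim\SF(P)$ by Definition \ref{wealth}. Since $\SF(P)_{\mathrm{red}}$ is the \emph{unique} reduced FCF equivalent to $\SF(P)$ by Proposition \ref{prop:reducedFCF}, I conclude $\SF(R)=\SF(P)_{\mathrm{red}}$. Applying the bijection $\SR$ and using $\SR(\SF(R))=R$ for $R\in\RCF(\OO)$ from \eqref{rated}, I obtain $R=\SR(\SF(P)_{\mathrm{red}})=P_{\mathrm{red}}$, as desired. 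Since every assertion reduces to a property already established for the FCF machinery in Proposition \ref{prop:reducedFCF} together with the bijectivity of $\SF|_{\RCF(\OO)}$, no genuine obstacle arises; the only thing to stay vigilant about is keeping straight the three places where ``reduced'' and ``$\sim$'' are interpreted on $\RCF$/$\PCF$ versus on $\FCF$, each converted across by the same bijection.
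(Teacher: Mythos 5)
Your proposal is correct and follows essentially the same route as the paper: establish that $\SR(\SF(P)_{\mathrm{red}})$ is reduced via Remark \ref{ship}, transport the equivalence $P\sim P_{\mathrm{red}}$ through $\SF$ using $\SF\circ\SR=\mathrm{id}$ and Definition \ref{wealth}, and derive uniqueness from the uniqueness in Proposition \ref{prop:reducedFCF} together with the bijection \eqref{rated}. No gaps.
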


\begin{proof}
For $P\in\PCF(\OO)$ we have that
$$P_\mathrm{red} = \SR\left(\SF(P)_{\mathrm{red}}\right)\in \RCF(\OO)$$
is reduced, as stated in Remark \ref{ship}.  We claim that
$P\sim P_\mathrm{red}$.  Note that
$\SF(P)\sim\SF(P)_{\mathrm{red}}$ implies that
$\SR\left(\SF(P)\right)\sim\SR\left(\SF(P)_{\mathrm{red}}\right)$ as in Remark
\ref{wealth2}. By Definition \ref{wealth}, $\SF(P)=\SF(\SR(\SF(P)))$ implies
$P\sim \SR(\SF(P))$.  Thus,
$P\sim \SR\left(\SF(P)_{\mathrm{red}}\right)$.
Suppose $R, R'\in\RCF(\OO)$ are both reduced and $R\sim R'$.
In this case, $\SF(R),\SF(R')\in\FCF(\OO)$ are both reduced and $\SF(R)\sim\SF(R')$
by Definition \ref{wealth}.  Proposition \ref{prop:reducedFCF} shows
that $\SF(R)=\SF(R')\in\FCF(\OO)$ and hence $R=R'$ by \eqref{rated}.
\end{proof}

Write $\oPCF(\OO)\colonequals \PCF(\OO)/\sim $.  
For $P=[b_1,\ldots, b_N,\overline{a_1,\ldots , a_k}\,]\in\PCF(\OO)$,
let $\overline{P}=\lbarbrack b_1,\ldots, b_N,\overline{a_1, \ldots
, a_k}\,\rbarbrack\in\oPCF(\OO)$ be the
equivalence class containing $P$. By Definition \ref{wealth} and 
Remark \ref{wealth2}, 
the maps
$\SF$ and $\SR$ induce inverse bijections
\begin{equation*}
\overline{\SF}:\oPCF(\OO)\stackrel{\simeq}{\longrightarrow}\oFCF(\OO)
\quad\text{and}\quad \overline{\SR}:\oFCF(\OO)
\stackrel{\simeq}{\longrightarrow}\oPCF(\OO).
\end{equation*}
Via the bijections $\overline{\SF}$, $\overline{\SR}$ 
the binary operation $\star$ of
Proposition \ref{mush} on $\oFCF(\OO)$ induces a binary
operation, also denoted $\star$, on $\oPCF(\OO)$:
for $P, P'\in\PCF(\OO)$ we have
\begin{equation}
\label{status}
\overline{\SF}(\overline{P}\star \overline{P}')=
\overline{\SF}(\overline{P})\star
\overline{\SF}(\overline{P}')\quad\text{and}\quad
\overline{P}\star \overline{P}'=\overline{\SR}
\left(\overline{\SF}(\overline{P})\star
\overline{\SF}(\overline{P}')\right).
\end{equation}

\begin{theorem}
\label{thm:PCFisom}
The semigroup $\oPCF(\OO)$
with its binary operation $\star$ 
is a group isomorphic to $\oFCF(\OO)\cong \Z/2\Z\ast\OO$.
The map $\WP:\PCF(\OO)\rightarrow \Z/2\Z\ast\OO$
of Definition \textup{\ref{def:PCFtoOFCF}}  induces an 
isomorphism of groups  $\oWP:\oPCF(\OO)\stackrel{\simeq}
{\longrightarrow}\Z/2\Z\ast\OO$.
\end{theorem}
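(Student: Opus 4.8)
The plan is to obtain the theorem entirely by transport of structure: the group $\oFCF(\OO)$ and all of its structure have already been established, and the bijection $\overline{\SF}$ recorded just before the statement lets me carry that structure over to $\oPCF(\OO)$; composing with the isomorphism $\oWF$ of Proposition \ref{prop:firstfreeprod} then supplies the identification with $\Z/2\Z\ast\OO$.

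First I would recall that $\overline{\SF}\colon\oPCF(\OO)\to\oFCF(\OO)$ and $\overline{\SR}\colon\oFCF(\OO)\to\oPCF(\OO)$ are mutually inverse bijections, as already noted above. By the defining formula \eqref{status} for $\star$ on $\oPCF(\OO)$, the map $\overline{\SF}$ satisfies $\overline{\SF}(\overline{P}\star\overline{P}')=\overline{\SF}(\overline{P})\star\overline{\SF}(\overline{P}')$; that is, $\overline{\SF}$ is by construction a homomorphism for the two binary operations. The key observation is then the elementary fact that a bijection intertwining a binary operation with a group operation transports all of the group axioms: since $\oFCF(\OO)$ is a group by Proposition \ref{mush}\eqref{mush3}, associativity, the identity, and inverses pull back through $\overline{\SR}=\overline{\SF}^{\,-1}$, so $\oPCF(\OO)$ becomes a group with identity $\overline{\SR}(\lbarbrack 0,0\rbarbrack)$ and with $\overline{P}^{\,-1}=\overline{\SR}(\overline{\SF}(\overline{P})^{-1})$, and $\overline{\SF}$ is an isomorphism of groups onto $\oFCF(\OO)$.

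To identify the quotient explicitly, I would invoke the factorization $\WP=\WF\circ\SF$ of \eqref{settle}. Passing to $\sim$-classes, this descends to $\oWP=\oWF\circ\overline{\SF}$, a composite of the group isomorphism $\overline{\SF}$ just obtained with the group isomorphism $\oWF$ of Proposition \ref{prop:firstfreeprod}; hence $\oWP\colon\oPCF(\OO)\to\Z/2\Z\ast\OO$ is an isomorphism of groups, which combined with $\oFCF(\OO)\cong\Z/2\Z\ast\OO$ finishes the proof. Because the substantive work---that $\overline{\SF}$ is a well-defined bijection and that $\oFCF(\OO)$ is a group---is already in place, no genuine obstacle remains; the only point needing a word is that $\WP$ descends to $\oPCF(\OO)$ at all, and this is immediate from Definition \ref{wealth}, which defines $P\sim P'$ to mean precisely $\WP(P)=\WP(P')$.
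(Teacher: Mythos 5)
Your proposal is correct and follows essentially the same route the paper takes: the group structure on $\oPCF(\OO)$ is obtained by transport of structure through the inverse bijections $\overline{\SF}$, $\overline{\SR}$ and the defining formula \eqref{status}, and the isomorphism $\oWP$ is the composite $\oWF\circ\overline{\SF}$ coming from the factorization $\WP=\WF\circ\SF$ in \eqref{settle}. Nothing is missing; the one point you flag---that $\WP$ descends to equivalence classes---is indeed immediate from Definition \ref{wealth}.
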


For the convenience of the reader we give in Figure 1  the maps defined
thus far and the relations between them.
\begin{figure}[ht]
\begin{equation*}
\xymatrixcolsep{3.5pc}
\xymatrixrowsep{3pc}
\xymatrix{
\PCF(\OO)\ar@<0ex>[r]^{\SF}\ar@<0ex>[d]\ar@/^4pc/[rrrd]^{E}   & 
\FCF(\OO)\ar@<0ex>[d]
\ar@/^1pc/[rrd]^{M}\ar@<0ex>[rd]^{W_{\sF}}
& & &\\
\oPCF(\OO)\ar@<0ex>[r]^{\overline{\SF}}_{\approx}
\ar@/_3.4pc/[rrr]_{\overline{E}}
&\oFCF(\OO)\ar@<0ex>[r]_{\approx}
^{\overline{W}_{\sF}}\ar@/_1.6pc/[rr]_{\overline{M}}
& \Z/2\Z\ast \OO\ar@<0ex>[r]^{\BM}&
\BBEpm(\OO)\ar@{^{(}->}[r]&\SLTpm(\OO)
}
\end{equation*}
\vspace*{.1in}
\begin{equation*}
\xymatrixcolsep{3.5pc}
\xymatrixrowsep{3pc}
\xymatrix{
\oPCF(\OO)\ar@<0ex>[r]^{\overline{\SF}}_{\approx}
\ar@/^4pc/[rrr]^{\overline{E}}
   & \oFCF(\OO)
\ar@/^1.8pc/[rr]^{\overline{M}}\ar@<0ex>[r]^{\overline{W}_{\sF}}_{\approx}
& \Z/2\Z\ast\OO\ar@{->>}[r]^{\BM}&\BBEpm(\OO)\ar@{^{(}->}[r]&\SLTpm(\OO)\\
\oPCF^{\, +}(\OO)\ar@<0ex>[r]_{\approx}\ar@{^{(}->}[u]
& \oFCF^{\, +}(\OO)
\ar@<0ex>[r]^{\overline{W}_{\sF^+}}_{\approx}\ar@{^{(}->}[u]
\ar@/_1.6pc/[rr]_{\overline{M}^{\, +}}
& \OO\ast\OO\ar@{->>}[r]\ar@{^{(}->}[u]_{\psi}
&\BBE(\OO)\ar@{^{(}->}[r]\ar@{^{(}->}[u]
&\SLT(\OO)\ar@{^{(}->}[u]   
}
\end{equation*}
\caption{The commutative diagrams formed by the maps in Sections 
\ref{sec:contfrac}
and \ref{sec:eqrelPCF}. The maps $W_{\sP}=W_{\sF}\circ\SF:
\PCF(\OO)\rightarrow \Z/2\Z\ast\OO$ and $\overline{W}_{\sP}=
\overline{W}_{\sF}\circ\overline{\SF}:\oPCF(\OO)\rightarrow \Z/2\ast\OO$ can
be added to the diagrams.}
\end{figure}

\begin{remark1}
If $\overline{P}\in\oPCF(\OO)$ with $P\in\PCF(\OO)$, then
Proposition \ref{prop:reducedPCF}
 shows that the equivalence class $\overline{P}$
contains a unique reduced RCF $P_{\textrm{red}}=\overline{P}_{\textrm{red}}$.  We call the $P_{\textrm{red}}=\overline{P}_{\textrm{red}}$
the \textsf{normal form} of $\overline{P}$.  Two elements 
$\overline{P}, \overline{P}'\in\oPCF(\OO)$ are equal if and only if they have
the same normal forms: 
$$\overline{P}=\overline{P}'\Longleftrightarrow \overline{P}_\textrm{red}=
\overline{P}'_\textrm{red}\in
\RCF(\OO).$$

For $\OO \ne \BC$, there are PCFs $P$ with not all partial quotients in $\OO$
with normal form $P_{\mathrm{red}}\in\RCF(\OO)$; most simply,
$[\,\overline{c,0,-c}\,] \sim [\,\overline{0}\,]$ for any $c \in \BC$.
We test the equivalence of $P, P'\in\PCF(\OO)$ via their
normal forms $P_{\mathrm{red}}, P'_{\mathrm{red}}\in\RCF(\OO)$.
It is never necessary to make an excursion outside $\OO$ to determine
equivalence of PCFs with partial quotients in $\OO$.

We give an example of how to compute $P_{\mathrm{red}}$ using
Proposition \ref{prop:reducedPCF} and Remark \ref{gifted}.
Suppose $P=[3,0,-3,4,\overline{0,1,0,-5}\,]\in\PCF(\Z)$.
Then \[\SF(P)=[3,0,-3,4,0,1,0,-5,0,-4,3,0,-3,0]\in\FCF(\Z).\]
We have
$$
\SF(P)\sim[0,4,0,1,0,-5,0,-4,3,0,-3,0]\sim [0,5,0,-9,0,0]\sim
[0,-4,0,0]\sim [0,-4].$$
Hence $\SF(P)_{\mathrm{red}}=[0,-4]\in\FCF(\Z)$ and 
$P_{\mathrm{red}}=\SR([0,-4])=[\,\overline{0,-4}\,]\in\RCF(\Z)$ with
$$
\lbarbrack 3,0,-3,4,\overline{0,1,0,-5}\,\rbarbrack=
\overline{P}=\overline{P}_{\mathrm{red}}=\lbarbrack\,\overline{0,-4}\,\rbarbrack\in\oPCF(\Z).
$$
In practice, $\overline{P}\star\overline{P}'\in\oPCF(\OO)$ is computed
using the normal forms $\overline{P}_{\textrm{red}}$,$\overline{P}'_{\textrm{red}}$:
if $\overline{P}_{\textrm{red}}=[\,\overline{c_1, \ldots, c_n}\,]\in\RCF(\OO)$ and
$\overline{P}'_{\textrm{red}}=[\,\overline{c'_1,\ldots, c'_{n'}}\,]\in\RCF(\OO)$, then
\[
\overline{P}\star\overline{P}'=\lbarbrack\, \overline{c_1, \ldots,  c_n, c'_1,
\ldots , c'_{n'}}\,\rbarbrack\in\oPCF(\OO).
\]
\end{remark1}

\begin{remark1}
\label{roses}
Suppose 
\[
P=[ b_1,\overline{a_1,\ldots, a_k}\,],\,
P'=[ b_1',\overline{a_1',\ldots, a_{k'}'}\,] \in\PCF(\OO)
\]
are of types $(1,k)$ and $(1,k')$ respectively.  Then the product takes
a particularly simple form with a representative of type $(1,k+k')$:
\begin{align}
\label{meter}
\overline{P}\star \overline{P'}&= \lbarbrack \,\overline{b_1, a_1,\ldots,
a_k,0,-b_1,0,b_1', a_1',\ldots a_{k'}', 0, -b_{1}',0}\,\rbarbrack\text{ by
\eqref{status}}\nonumber\\
&= \lbarbrack \,
\overline{b_1, a_1,\ldots, a_k+b_1'-b_1, a_1', \ldots, a_{k'}'-b_1',0}\,
\rbarbrack\text{ using \eqref{muster}} \nonumber \\  
&=\lbarbrack b_1,\overline{a_1,\ldots, a_{k-1}, a_k+b'_1-b_1, a'_1,
\ldots, a'_{k'-1}, a'_{k'}-b_1',0,b_1}\,\rbarbrack\nonumber\\
&= \lbarbrack b_1, \overline{a_1,\ldots, a_{k-1}, a_{k}+b_1'-b_1, a_1', \ldots,
a'_{k'-1}, a'_{k'}-b'_1+b_1}\,\rbarbrack .
\end{align}

A similar exercise shows that the product is simple for $P,P'\in\PCF(\OO)$
with $\In(P)=\In(P')$.
Suppose 
\begin{equation*}
P=[b_1,\ldots, b_N,\overline{a_1,\ldots, a_k}\,]\text{ and }
P'=[b_1,\ldots, b_N,\overline{a_1', \ldots , a_{k'}'}\,].
\end{equation*}
Then
\begin{equation}
\label{beetle}
\overline{P}\star\overline{P'}=\lbarbrack
b_1,\ldots, b_N,\overline{a_1,\ldots, a_k,a_1',\ldots, a_{k'}'}\,\rbarbrack.
\end{equation}

\end{remark1}

\begin{remark1}
\label{angel}
For every PCF of the form $P=[b_1,\ldots, b_N,\overline{0}\,]$, 
$\WP(P) \in \Z/2\Z\ast \OO$ is an involution, but, in
general, $P$ is not equivalent to $[\,\overline{0}\,]$.

Every PCF of the form $[b_1,\ldots, b_N,\overline{0,0}\,]$ is in the class of the identity in
$\oPCF(\OO)$, that is, $P \sim [\,\overline{0,0}\,]$.
\end{remark1}

\section{The subgroup \texorpdfstring{$\oPCF_{\OO}(A)\leq\oPCF(\OO)$}{\textover PCF\textoinferior(A)\textle\textover PCF(O)} and its representations}
\label{sec:twoDreps}

For $P\in\PCF(\OO)$, \cite[Defn.~2.4]{bej}
defines a matrix $E(P)$ which plays a large role
in the study of $P$.  We review the definition, using
the notation of this paper.

\begin{definition1}
\label{ghost}
Let $E:\PCF(\OO)\rightarrow \SLTpm(\OO)$ be the map
$E=\MF\circ\SF$
with $\MF$ as in \eqref{eq:FCFtoSLT} and $\SF$ as in 
\eqref{eq:FtoR}.
\end{definition1}
We denote the quadratic polynomial $\Quad(E(P))$
as in \eqref{recycling}  by $\Quad(P)\in\OO[x]$ 
and the multiset of its roots $\Roots(E(P))$ by $\Roots(P)$.

\begin{proposition}
\label{prop:EmatchesM}
The map $E$ induces a homomorphism $\overline{E}:\oPCF(\OO)
\rightarrow\SLTpm(\OO)$ with image
$\BBEpm(\OO)\le\SLTpm(\OO)$ as in Section \textup{\ref{sec:contfrac}}.
\end{proposition}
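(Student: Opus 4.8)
The plan is to exhibit $\overline{E}$ as the composite of two maps already understood in the excerpt, namely the isomorphism $\overline{\SF}\colon\oPCF(\OO)\stackrel{\simeq}{\longrightarrow}\oFCF(\OO)$ of Theorem \ref{thm:PCFisom} and the homomorphism $\oMF\colon\oFCF(\OO)\to\SLTpm(\OO)$ of Proposition \ref{prop:mapSLT}. First I would check that $E$ descends to the quotient $\oPCF(\OO)=\PCF(\OO)/\!\sim$. By Definition \ref{ghost} we have $E=\MF\circ\SF$, and by Definition \ref{wealth} the relation $P\sim P'$ on $\PCF(\OO)$ holds precisely when $\SF(P)\sim\SF(P')$ in $\FCF(\OO)$. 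Since Proposition \ref{prop:mapSLT} shows that $\MF$ is constant on $\sim$-classes in $\FCF(\OO)$ (that is, $\MF$ factors through $\oMF$), any $P\sim P'$ yields $E(P)=\MF(\SF(P))=\MF(\SF(P'))=E(P')$. Hence $E$ is constant on each equivalence class and induces a well-defined map $\overline{E}\colon\oPCF(\OO)\to\SLTpm(\OO)$.

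Next I would record the factorization $\overline{E}=\oMF\circ\overline{\SF}$, which is immediate from the definitions: for $P\in\PCF(\OO)$,
\[
\oMF\bigl(\overline{\SF}(\overline{P})\bigr)=\oMF\bigl(\overline{\SF(P)}\bigr)=\MF(\SF(P))=E(P)=\overline{E}(\overline{P}).
\]
From this the homomorphism property is automatic: $\overline{\SF}$ is a group isomorphism by Theorem \ref{thm:PCFisom} and $\oMF$ is a group homomorphism by Proposition \ref{prop:mapSLT}, so their composite $\overline{E}$ is a homomorphism of groups.

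Finally I would compute the image. Because $\overline{\SF}$ is surjective (indeed bijective), the image of $\overline{E}=\oMF\circ\overline{\SF}$ equals the image of $\oMF$, and Proposition \ref{planet} identifies the latter as $\BBEpm(\OO)$. This gives $\overline{E}(\oPCF(\OO))=\BBEpm(\OO)$, as claimed.

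There is no serious obstacle here: every ingredient has been established earlier, and the only point requiring genuine care is the match in Definition \ref{wealth} between the equivalence relation on $\PCF(\OO)$ and the map $\SF$, which is exactly what forces $E$ to descend despite its apparent dependence on the type $(N,k)$. The proposition is thus a bookkeeping consequence of Theorem \ref{thm:PCFisom} together with Propositions \ref{prop:mapSLT} and \ref{planet}, assembled through the factorization $\overline{E}=\oMF\circ\overline{\SF}$.
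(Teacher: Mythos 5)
Your proof is correct and follows essentially the same route as the paper's: the paper likewise deduces well-definedness from Definition \ref{wealth} together with Proposition \ref{prop:mapSLT} and identifies the image via Proposition \ref{planet}, with the factorization $\overline{E}=\oMF\circ\overline{\SF}$ appearing in the paper's Figure 1. Your write-up simply makes explicit the details the paper leaves implicit.
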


\begin{proof}
From Definition \ref{wealth} and Proposition \ref{prop:mapSLT}
we see that $E$ is well-defined on equivalence classes in $\PCF(\OO)$.
By Proposition \ref{planet}, the image of $E$ is $\BBEpm(\OO)$.
\end{proof}

\begin{definition1}
\label{def:vecPCF}
Let $\beta \in \PP^1(\BC).$  Set
$$\oPCF_\OO(\beta) = 
\{ \overline{P} \in \oPCF(\OO) \mid \overline{E}(\overline{P})\beta = \beta\}.
$$
\end{definition1}
Recall from Proposition \ref{prop:singlevector}
that $T(\beta)$ is the subgroup $\{M\in\GLT(\BC)\mid M\beta=\beta\}$
and $T(\beta)$ has a multiplicative character $\lambda_\beta$
mapping $M$ to its $v(\beta)$-eigenvalue.
The preimage of $T(\beta) \cap \BBEpm(\OO)$ under $\overline{E}$ is $\oPCF_\OO(\beta)$,
so $\oPCF_\OO(\beta)$ is a subgroup of $\oPCF(\OO)$. The 
multiplicative character $\lambda_\beta \circ \overline{E}$ maps an element
$\overline{P}\in\oPCF_\OO(\beta)$ to the $v(\beta)$-eigenvalue of $\overline{E}(\overline{P})$.

\begin{definition1}
\label{roster}
\begin{enumerate}
\item
For $Q\in\OO[X]$ with $\deg Q \leq 2,$ set
\[
\oPCF_{\OO}(Q)=\{\overline{P}\in \oPCF(\OO)\mid \overline{E}(\overline{P})\in G(Q)\},
\]
where $G(Q)$ is the subgroup of $\GLT(\BC)$ defined in
Proposition \ref{prop:Qgroup}.
\item
For $A\in\GLT(\OO)$, set
$$\oPCF_{\OO}(A) = \oPCF_{\OO}(\Quad(A)).$$
\item
\label{def:matrixPCF}
Recall from Proposition \ref{prop:EmatchesM}
that $E:\PCF(\OO)\rightarrow \SLTpm(\OO)$ factors through $\oPCF(\OO)$:
\[
E:\PCF(\OO)\longrightarrow \oPCF(\OO)\stackrel{\overline{E}}{\longrightarrow}
\SLTpm(\OO).
\]
For $P \in \PCF(\OO)$, set
$$\oPCF_{\OO}(P) = \oPCF_{\OO}(\overline{P})=\oPCF_{\OO}(E(P))=\oPCF_{\OO}
(\overline{E}(\overline{P})).$$
\end{enumerate}
\end{definition1}
For $Q\in \OO[X]$ with $\deg Q \leq 2$, the preimage of $G(Q) \cap \BBEpm(\OO)$ under $\overline{E}$ is $\oPCF_\OO(Q)$,
so $\oPCF_\OO(Q)$ is a group. For $\beta\in\Roots(Q)$, the group $\oPCF_\OO(Q)$ 
has a multiplicative character $\lambda_\beta \circ \overline{E}$
inherited from the containment $\oPCF_\OO(Q) \subset \oPCF_\OO(\beta)$.

\begin{theorem}
\label{thm:characters}
Let $A \in \GLT(\OO)$.
\begin{enumerate}
\item
\label{charA}
If $\Roots(A) = \{\beta, \beta^*\}$, $\beta \neq \beta^*$,
and $A$ has corresponding eigenvectors $v(\beta), v(\beta^*)$, then
$$\oPCF_{\OO}(A) = \oPCF_\OO(\beta) \cap \oPCF_\OO(\beta^*),$$
and the values of the characters $\lambda_\beta \circ \overline{E}$ and
$\lambda_{\beta^*}\circ\, \overline{E}$ on $\oPCF_\OO(A)$ are
quadratic integral over $\OO$ and in $\OO[\beta]^\times$.
The product of these characters is $\det \circ\, \overline{E}$.
\item
\label{charB}
If $\Roots(A)$ has one element $\beta$ of multiplicity $2$, $\oPCF_\OO(A)$
has a character
$\lambda_\beta \circ\, \overline{E}$ whose square is $\det \circ\, \overline{E}$.
\item
\label{charC}
Otherwise, $A$ is a scalar multiple of the identity and $\oPCF_\OO(A)$ has a
character
$\lambda \circ\, \overline{E}$ equal to $\lambda_\beta \circ \overline{E}$
for every $\beta.$  Its square is $\det \circ\, \overline{E}$.
\end{enumerate}
\end{theorem}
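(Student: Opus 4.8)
The plan is to deduce all three cases from the shape of the quadratic $\Quad(A)$, using that by Definition \ref{roster} and Proposition \ref{prop:EmatchesM} the group $\oPCF_\OO(A)=\oPCF_\OO(\Quad(A))$ is precisely the $\overline{E}$-preimage of $G(\Quad(A))\cap\BBEpm(\OO)$, combined with the character theory of Propositions \ref{prop:singlevector} and \ref{prop:Qgroup}. The three alternatives of the theorem match the three possibilities for $\Quad(A)$: two distinct roots (so $\Quad(A)$ is not a square), a double root (a nonzero square), or $\Quad(A)=0$ (equivalently, by \eqref{recycling}, $A$ scalar). Throughout I write $M=\overline{E}(\overline{P})\in\BBEpm(\OO)\subseteq\SLTpm(\OO)$ for $\overline{P}\in\oPCF_\OO(A)$, and use that the eigenvalues of such an $M$ are roots of its characteristic polynomial $X^2-\operatorname{tr}(M)X+\det(M)$, whose coefficients lie in $\OO$ with $\det(M)=\pm1$.

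For part \eqref{charA}, since $\beta\neq\beta^*$ the polynomial $\Quad(A)$ is not a square, so Proposition \ref{prop:Qgroup} gives $G(\Quad(A))=T(\beta)\cap T(\beta^*)$ with $\{\beta,\beta^*\}=\Roots(A)$. As the image of $\overline{E}$ lies in $\BBEpm(\OO)$, taking $\overline{E}$-preimages and using that a preimage of an intersection is an intersection of preimages turns this into $\oPCF_\OO(A)=\oPCF_\OO(\beta)\cap\oPCF_\OO(\beta^*)$, since $\oPCF_\OO(\beta)$ is by definition the preimage of $T(\beta)\cap\BBEpm(\OO)$. For the character values I would note first that $\lambda_\beta(M)=m_{21}\beta+m_{22}\in\OO[\beta]$ by \eqref{thunder}; that it is quadratic integral over $\OO$ because it is a root of the monic characteristic polynomial above; and that $\lambda_\beta(M)\,\lambda_{\beta^*}(M)=\det(M)=\pm1$ as the product of the two eigenvalues. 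Since $\lambda_{\beta^*}(M)=\operatorname{tr}(M)-\lambda_\beta(M)$ also lies in $\OO[\beta]$, the relation $\lambda_\beta(M)\,\lambda_{\beta^*}(M)\in\OO^\times$ forces both to be units in $\OO[\beta]$. That same identity $\lambda_\beta(M)\,\lambda_{\beta^*}(M)=\det(M)$ is exactly the assertion that the product of the two characters is $\det\circ\overline{E}$.

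Parts \eqref{charB} and \eqref{charC} are the degenerate cases. When $\Quad(A)$ is a nonzero square with double root $\beta$, every $M\in G(\Quad(A))$ has $\beta$ as a root of $\Quad(M)$ and hence fixes $\beta$, so $G(\Quad(A))\subseteq T(\beta)$ and $\oPCF_\OO(A)\subseteq\oPCF_\OO(\beta)$ inherits the character $\lambda_\beta\circ\overline{E}$ as in the remark following Definition \ref{roster}. Because such an $M$ is, by Proposition \ref{prop:Qgroup}, conjugate to an upper-triangular matrix with equal diagonal entries, both its eigenvalues equal $\lambda_\beta(M)$, whence $\det(M)=\lambda_\beta(M)^2$ and $(\lambda_\beta\circ\overline{E})^2=\det\circ\overline{E}$. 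When $A$ is scalar, $\Quad(A)=0$ and $\oPCF_\OO(A)$ consists of those $\overline{P}$ with $\overline{E}(\overline{P})$ a scalar matrix, i.e.\ those $M$ with $\Quad(M)=0$ (equivalently $M$ scalar by \eqref{recycling}), read through the $\Roots(0)=\PP^1(\BC)$ convention of Section \ref{sec:2x2}; a scalar $M=\lambda I$ fixes every $\beta$ with common eigenvalue $\lambda$, so $\lambda_\beta\circ\overline{E}$ is independent of $\beta$ and $\lambda^2=\det(M)$ again yields the square relation.

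I expect the friction here to be bookkeeping rather than a single hard step, since the theorem mainly assembles earlier results. The two points that most want care are that $\lambda_{\beta^*}(M)$ lands in $\OO[\beta]$ (not merely in $\OO[\beta^*]$) via the trace relation, which is what makes the unit statement symmetric in $\beta$ and $\beta^*$, and the boundary conventions: when a root is $\infty$ (so $\deg\Quad(A)\le1$) one reads $\lambda_\infty(M)=m_{11}\in\OO$ from \eqref{thunder} and interprets $\OO[\infty]$ as $\OO$, while in the scalar case $\Quad(A)=0$ must be fed through the $\Roots(0)=\PP^1(\BC)$ convention so that ``$\lambda_\beta\circ\overline{E}$ for every $\beta$'' is literally meaningful.
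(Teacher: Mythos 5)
Your proposal is correct and follows essentially the same route as the paper: Proposition \ref{prop:Qgroup} gives the intersection $\oPCF_{\OO}(A)=\oPCF_\OO(\beta)\cap\oPCF_\OO(\beta^*)$, relation \eqref{thunder} places $\lambda_\beta(E)$ in $\OO[\beta]$, the trace puts $\lambda_{\beta^*}(E)$ there too, and $\det E=\pm1$ forces both to be units, with the degenerate cases handled the same way. Your extra remarks on preimages of intersections and the conventions at $\infty$ and at $\Quad(A)=0$ are harmless elaborations of what the paper leaves implicit.
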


\begin{proof}
Let $\overline{P} \in \oPCF(A)$ and $E = \overline{E}(\overline{P}) \in \BBEpm(\OO)$.

\eqref{charA}:
If $\Quad(A)$ is not zero and not a square, then
$\Roots(A)$ has distinct elements $\beta, \beta^*$ and $E$ has corresponding eigenvalues
$\lambda_\beta(E), \lambda_{\beta^*}(E)$ that solve the monic quadratic equation $\det(E-\lambda I)$.
It follows from Propostion \ref{prop:Qgroup} that
$$\oPCF_{\OO}(A) = \oPCF_\OO(\beta) \cap \oPCF_\OO(\beta^*).$$
Relation \eqref{thunder} shows
the eigenvalue $\lambda_\beta(E) \in \OO[\beta]$.
The sum $\lambda_\beta(E)+\lambda_{\beta^*}(E)$ is in $\OO$, so $\lambda_{\beta^*}(E)\in\OO[\beta]$.
The product $\lambda_\beta(E)\lambda_{\beta^*}(E)$ is $\det E =  \pm1$, so both 
$\lambda_\beta(E)$ and $\lambda_{\beta^*}(E)$ are units.

\eqref{charB}:
When $\Quad(A)$ is a nonzero square, $A$ has a single eigenvector $v(\beta)$.
For $E$ as above,
the eigenvalue $\lambda_\beta(E)$ solves a monic linear equation over $\OO$ and
$\lambda_\beta(E)^2 = \pm1$.

\eqref{charC}:
When $\Quad(A)$ is zero, every $E \in \oPCF_\OO(A)$ is $\mu_E I$, a scalar
multiple of $I$.  In this case, $\lambda_\beta(E) = \mu_E$ for every $\beta$.
\end{proof}

\begin{remark1}
Let $P=[b_1,\overline{a_1,\ldots,a_k}]$ be a PCF of type $(1,k)$.
The convergent $\mathcal{C}_k(P)$
relates $\Roots(P)$ and the eigenvalues of $E = \MF(\SF(P))$.
When
\begin{equation}
\label{eq:type1Kspecial}
P \sim [\overline{b_1, a_1, \ldots, a_{k-1}, a_k-b_1,0}],
\end{equation}
we have $E(P)=\MF([b_1,a_1, \ldots, a_{k-1}, a_k-b_1,0])$.
For $M=M([b_1,a_1,\ldots,a_{k-1}])=\left[\begin{smallmatrix} m_{11} & 
m_{12}\\m_{21} & m_{22}\end{smallmatrix}\right]$,
we have
\[
E=\begin{bmatrix}
E_{11} & E_{12}\\E_{21} & E_{22}\end{bmatrix}=
\begin{bmatrix}
m_{11} & m_{12}\\ m_{21} & m_{22}\end{bmatrix}
\begin{bmatrix}
1 & a_k-b_1\\
0 & 1 \end{bmatrix}=
\begin{bmatrix}
m_{11} & m_{11}(a_k-b_1)+m_{12}\\
m_{21} & m_{21}(a_k - b_1) + m_{22}\end{bmatrix}.
\]
Thus, $E_{11}/E_{21}$ is the $k$th convergent $\mathcal{C}_k(P)$.
If $\Roots(P)$ has two distinct elements $\beta, \beta^*$ and $E$ has associated eigenvalues
$\lambda, \lambda^*$, 
$$\lambda=E_{11}-E_{21}\beta^\ast.$$
If $\Roots(P)$ has one element $\beta$ of multiplicity two, $E_{11}-E_{21}\beta$ is the eigenvalue of $E$.  If $E$ is a scalar multiple of $I$, its eigenvalue is $E_{11}$.
\end{remark1}
\begin{remark1}
The maps defined in this section form a commutative diagram as given in
Figure 2.
\begin{figure}[ht]
\begin{equation*}
\label{snails3}
\xymatrixcolsep{3.5pc}
\xymatrixrowsep{2.5pc}
\xymatrix{
\oPCF(\OO)\ar@<0ex>[r]^{\overline{E}}
   & \BBEpm(\OO)\ar@{^{(}->}[r]   & \SLTpm(\OO)\\
\oPCF_\OO(\beta)\ar@<0ex>[r]\ar@{^{(}->}[u]
   & T(\beta)\cap\BBEpm(\OO)\ar@<0ex>[r]^{\lambda_\beta}\ar@{^{(}->}[u]
& \OO[\beta]^\times\\
\oPCF_\OO(Q)\ar@<0ex>[r]\ar@{^{(}->}[u]
   & G(Q)\cap\BBEpm(\OO)\ar@{^{(}->}[u]
}
\end{equation*}
\caption{The commutative diagram formed by the maps in Section 
\ref{sec:twoDreps}
for $Q\in\OO[X]$ with $\deg(Q)\leq 2$ and $\beta\in\Roots(Q)$.}
\end{figure}
\end{remark1}

\section{Examples of Theorem \ref{thm:characters} arising from number rings}
\label{dirge}

\subsection{\texorpdfstring{\except{toc}{\boldmath{$P_1=
[1,\overline{2}], \hat{P}_1=\sqrt{2}$}}\for{toc}{$P_1=
[1,\overline{2}], \hat{P}_1=\sqrt{2}$}}
{P\textoneinferior=[1,\textover{2}], \^{P}\textoneinferior=\textsurd 2}}
\label{runt}
The $\Z$-PCF $P_1$ is of type $(N,k)=(1,1)$ and
\[
E(P_1)=D(1)D(2)D(1)^{-1}=\begin{bmatrix}
1 & 2 \\ 1 & 1\end{bmatrix},
\]
so $\Quad(P_1)=x^2-2$, $\Roots(P_1)=\{ \sqrt{2},-\sqrt{2}\}$, and the first convergent
$\mathcal{C}_1(P_1) =1/1$.  
We have $\overline{P}_1\in\oPCF_\Z(P_1)$, with $\oPCF_\Z(P_1)$
as in Definition \ref{roster}\eqref{def:matrixPCF}.
The characters 
\begin{equation}
\label{diamond}
\lambda\colonequals \lambda_{\sqrt{2}}\circ \overline{E},\,\, 
\lambda^*\colonequals \lambda_{-\sqrt{2}}\circ \overline{E} \,\,
\text{ of $\oPCF_\Z(P_1)$}
\end{equation}
map $\overline{P}_1$
to the eigenvalues of $E(P_1)$:
\begin{equation}
\label{took}
\lambda(\overline{P}_1)=1+\sqrt{2}\text{ and }
\lambda^\ast(\overline{P}_1)=1-\sqrt{2},
\end{equation}
which are units in $\Z[\sqrt{2}]$,
and $\lambda(\overline{P}_1)\lambda^\ast(\overline{P}_1)
=\det(E({P}_1))=(-1)^k=-1$.

\subsection{\texorpdfstring{\except{toc}{\boldmath{$P_2=
[1,\overline{2,2,2}], \hat{P}_2=\sqrt{2}$}}\for{toc}{$P_2=
[1,\overline{2,2,2}], \hat{P}_2=\sqrt{2}$}}
{P\texttwoinferior=[1,\textover 2,\textover 2,\textover 2], \^{P}\texttwoinferior=\textsurd 2}}
\label{runt1}

The $\Z$-PCF $P_2$ has type $(N,k)=(1,3)$, and
\begin{align*}
E(P_2)&=D(1)D(2)^3D(1)^{-1}=\begin{bmatrix} 7 & 10\\5 & 7\end{bmatrix}=
E(P_1)^3=\begin{bmatrix} 1 & 2 \\ 1 & 1\end{bmatrix}^3,
\end{align*}
so $\Quad(P_2)=5x^2-10 = 5\Quad(P_1)$ and $\Roots(P_2)=\Roots(P_1)=
\{\sqrt{2}, -\sqrt{2}\}$.  By \eqref{beetle}, 
the equivalence class 
$\overline{P}_2$ satisfies 
$$\overline{P}_2 = \overline{P}_1\star\overline{P}_1
\star\overline{P}_1 \in \oPCF_\Z(P_2)=\oPCF_\Z(P_1).$$
The convergent $\mathcal{C}_3(P_2)$ is 7/5.
The characters $\lambda$, $\lambda^\ast$ of $\oPCF_\Z(P_2)=\oPCF_\Z(P_1)$
in \eqref{diamond} map
$\overline{P}_2$ to the eigenvalues of $E(P_2)$:
\[\lambda(\overline{P}_2)=7+5\sqrt{2}\text{ and }
\lambda^\ast(\overline{P}_2)=7-5\sqrt{2},\]
which are units in $\Z[\sqrt{2}]$,
and $\lambda(\overline{P}_2)\lambda^\ast(\overline{P}_2)
=\det(\overline{E}(\overline{P}_2))=(-1)^3=-1$.
Consequent to $\lambda$
being a homomorphism, 
\[
\lambda(\overline{P}_2)=\lambda(\overline{P}_1\star 
\overline{P}_1\star \overline{P}_1)=5\sqrt{2}+7=
(1+\sqrt{2})^3=\lambda(\overline{P}_1)^3
\]
using \eqref{took}.

\subsection{\texorpdfstring{\except{toc}{\boldmath{$P_3=
[2,\overline{-2,4}], \hat{P}_3=\sqrt{2}$}}\for{toc}{$P_3=
[2,\overline{-2,4}], \hat{P}_3=\sqrt{2}$}}
{P\textthreeinferior=[2,\textover -2,\textover 4], 
\^{P}\textthreeinferior=\textsurd 2}}
\label{runt2}

The $\Z$-PCF $P_3$ is of type $(N,k)=(1,2)$ and
\begin{align*}
E(P_3) & = D(2)D(-2)D(4)D(2)^{-1}=\begin{bmatrix}
-3 & -4\\ -2 & -3\end{bmatrix},
\end{align*}
so $\Quad(P_3)=-2x^2+4 = -2\Quad(P_1)$ and $\Roots(P_3)=\Roots(P_1)$.
The convergent $\mathcal{C}_2(P_3)$ is $3/2$.
The class  $\overline{P}_3$ is in $\oPCF_\Z(P_1)=\oPCF_\Z(P_3)$.
For the characters $\lambda$, $\lambda^\ast$ as in \eqref{diamond},
we have 
\begin{equation}
\label{peets}
\lambda(\overline{P}_3)=-3-2\sqrt{2}\text{ and }
\lambda^\ast(\overline{P}_3)=-3+2\sqrt{2},
\end{equation}
with
which are units in $\Z[\sqrt{2}]$,
and $\lambda(\overline{P}_3)\lambda^\ast(\overline{P}_3)=
\det(\overline{E}(\overline{P}_3))=(-1)^2=1$.

\subsection{\texorpdfstring{\except{toc}{\boldmath{$P_4=
[2,\overline{3,-2,3}], \hat{P}_4=\sqrt{2}$}}\for{toc}{$P_4=
[2,\overline{3,-2,3}], \hat{P}_4=\sqrt{2}$}}
{P\textfourinferior=[2,\textover 3,\textover -2,\textover 3], 
\^{P}\textfourinferior=\textsurd 2}}
\label{runt3}

The $\Z$-PCF
$P_4$ is of type $(N,k)=(1,3)$ and
\begin{align*}
E(P_4) &= D(1)D(3)D(-2)D(3)D(1)^{-1}=\begin{bmatrix} -7 & -10\\
-5 & -7 \end{bmatrix},
\end{align*}
so $\Quad(P_4)=-5x^2+10=-5\Quad(P_1)$ and $\Roots(P_4)=\Roots(P_1)$.
The convergent $\mathcal{C}_3(P_4)$ is $7/5$.
We have  $\overline{P}_4\in\oPCF_Z(P_4)=\oPCF_\Z(P_1)$ and, by \eqref{meter},
\[
\overline{P}_4=\lbarbrack 1, \overline{2-1+2, -2, 4-2+1}\rbarbrack
 = \lbarbrack 1, \overline{2}\rbarbrack  \lbarbrack 2, \overline{-2,4}
\rbarbrack =
\overline{P}_1\star\overline{P}_3.
\]
The values of the characters $\lambda, \lambda^\ast$ in \eqref{diamond}
on $\overline{P}_4$ are
$$\lambda(\overline{P}_4)=-7-5\sqrt{2}\text{ and }
\lambda^\ast(\overline{P}_4)=-7+5\sqrt{2}.$$
We have
$\lambda(\overline{P}_4)\lambda^\ast(\overline{P}_4)=
\det(\overline{E}(\overline{P}_4))=(-1)^3=-1.$
From Example \ref{runt} $\lambda(\overline{P}_1)=1+\sqrt{2}$ and from
Example \ref{runt2} $\lambda(\overline{P}_3)=-2\sqrt{2}-3$.
We verify using \eqref{took} and \eqref{peets}
\[
\lambda(\overline{P}_4)=\lambda(\overline{P}_1\star \overline{P}_3)
=-5\sqrt{2}-7=(1+\sqrt{2})(-2\sqrt{2}-3)
=\lambda(\overline{P}_1)\lambda(\overline{P}_3).
\]

\subsection{\texorpdfstring{\except{toc}{\boldmath{$P_5=[442+312\sqrt{2}, \overline{-298532+211094\sqrt{2},
884+624\sqrt{2}}],\hat{P}_5=\sqrt{2+\sqrt{2}}$}}\for{toc}{$P_5
=[442+312\sqrt{2}, 
\overline{-298532+211094\sqrt{2},884+624\sqrt{2}}], 
\hat{P}_5=\sqrt{2+\sqrt{2}}$}}
{P\textfiveinferior=[442+312\textsurd{2},\textover -298532+211094\textsurd{2},\textover 884+624\textsurd{2}],
\^{P}\textfiveinferior =\textsurd(2+\textsurd{2})}}
\label{runt4}
The $\Z[\sqrt{2}]$-PCF $P_5$ from \cite[Cor.~8.19]{bej}
is of type $(N,k)=(1,2)$.  We have
\begin{align*}
E=E(P_5) & = D(442+312\sqrt{2})D(-298532+211094\sqrt{2})
D(884+624\sqrt{2})D(442+312\sqrt{2})^{-1}\\
&= \begin{bmatrix}
-228487+161564\sqrt{2} & -174876+123656\sqrt{2}\\
-298532+211094\sqrt{2} & -228487+161564\sqrt{2}\end{bmatrix},\text{ so}\\
\Quad(P_5)&=(-298532+211094\sqrt{2})x^2+174876-123656\sqrt{2}\\
&=(-298532+211094\sqrt{2})(x^2-2-\sqrt{2}),
\end{align*}
$\Roots(P_5)=\{ \beta\colonequals \sqrt{2 + \sqrt{2}}, -\beta\}$,
and the convergent
$\mathcal{C}_2(P_5)=(11502+8105\sqrt{2})/52\approx 441.6$,
which is a horrible approximation to $\beta$ in keeping with
\cite[p. 415]{bej}, which remarks on and quantifies the 
extremely slow convergence of  $P_5$.
The values of the characters 
\[
\mu\colonequals \lambda_\beta\circ\overline{E},
,\mu^\ast\colonequals \lambda_{-\beta}\circ\overline{E}
:\oPCF_{\Z[\sqrt{2}]}(P_5)\rightarrow \Z[\sqrt{2}][\beta]^\times
\]
on the class $\overline{P}_5\in\oPCF_{\Z[\sqrt{2}]}(P_5)$
are
\begin{align*}
\mu(\overline{P}_5)&=(-228487+161564\sqrt{2})+(-298532+211094\sqrt{2})\beta\text{ and}\\
\mu^\ast(\overline{P}_5)&=(-228487+161564\sqrt{2})-(-298532+211094\sqrt{2})
\beta,
\end{align*}
which are both units in the ring $\Z[\sqrt{2}][\beta]=\Z[\beta]$.
In fact
\[
\Norm_{\Q(\beta)/\Q(\sqrt{2})}\mu(\overline{P}_5)=\Norm_{\Q(\beta)/\Q(\sqrt{2})}\mu^\ast(\overline{P}_5)=
\mu(\overline{P}_5)\mu^\ast(\overline{P}_5)=
\det(\overline{E}(\overline{P}_5))=1.
\]

\section{Refined notions of convergence for periodic continued fractions}
\label{sec:convergePCF}
For a PCF $P$, the multiset $\Roots(P)$ contains the limit
$\hat P$ of $P$ when that limit exists; see, e.g.,
\cite[Appendix]{bej}.

\begin{theorem}
\label{ratify}
Let $P=[b_1,\ldots, b_N,\overline{a_1,\ldots, a_k}]$
and $P'=[b'_1,\ldots, b'_{N'},\overline{a'_1,\ldots , a'_{k'}}]$
be PCFs of types $(N,k)$ and $(N',k')$ that are $\CF$-equal.
\begin{enumerate}
\item\label{ratify1} If $k'=k$ so that $P$ and $P'$ are $\kk$-equal, then
$E(P)=E(P')$, $\Quad(P)=\Quad(P')$,
and $\Roots(P)=\Roots(P')$.
\item\label{ratify2} If $k'\ne k$, then $E(P)^{k'}=E(P')^k$, and
$\Quad(P)$ and $\Quad(P')$ are linearly dependent.
If $\Quad(P)$ and $\Quad(P')$ are both nonzero, then
$\Roots(P)=\Roots(P')$.
\item\label{ratify3}
The PCF $P$ converges if and only if $P'$ converges and in this case
their limits are equal: $\hat P=\hat P'\in\Roots(P)=\Roots(P')$.
\end{enumerate}
\end{theorem}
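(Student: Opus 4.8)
The plan is to reduce everything to two structural facts: that $\overline E\colon\oPCF(\OO)\to\SLTpm(\OO)$ is a homomorphism (Proposition \ref{prop:EmatchesM}), and that enlarging the period corresponds to taking a power. For a PCF $Q$ of type $(N,k)$, write $Q^{(m)}$ for the $\CF$-equal PCF of type $(N,mk)$ obtained by repeating $\Rep(Q)$ exactly $m$ times. The concatenation formula \eqref{beetle} gives $\overline{Q^{(m)}}=\overline Q^{\,m}$ in $\oPCF(\OO)$, and applying $\overline E$ yields $E(Q^{(m)})=E(Q)^m$. For part \eqref{ratify1}, if $k=k'$ then $P$ and $P'$ are $\kk$-equal, hence $P\sim P'$ by Proposition \ref{desktop}; as $E$ is well defined on $\sim$-classes this gives $E(P)=E(P')$, and therefore $\Quad(P)=\Quad(P')$ and $\Roots(P)=\Roots(P')$.

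\textbf{Part \eqref{ratify2}.} First I would pass to a common primitive period. Set $d=\gcd(k,k')$; since the shared underlying UCF has a period dividing both $k$ and $k'$, its tail is $d$-periodic (by the standard fact that a sequence with periods $k$ and $k'$ has period $\gcd(k,k')$), so there is a $\CF$-equal PCF $\widetilde P$ of type $(\max(N,N'),d)$. Then $\widetilde P^{(k/d)}$ is $\kk$-equal to $P$ and $\widetilde P^{(k'/d)}$ is $\kk$-equal to $P'$, so part \eqref{ratify1} together with the power formula gives $E(P)=G^{k/d}$ and $E(P')=G^{k'/d}$, where $G\colonequals E(\widetilde P)$. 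In particular $E(P)^{k'}=G^{kk'/d}=E(P')^{k}$. Now $E(P)$ and $E(P')$ are both powers of the single matrix $G$; since $G$ is $2\times2$, Cayley--Hamilton expresses each power as $\alpha G+\beta I$, and because $\Quad$ is linear with $\Quad(I)=0$ (cf.\ Proposition \ref{prop:Qstable}) we obtain $\Quad(P)=\alpha\,\Quad(G)$ and $\Quad(P')=\alpha'\,\Quad(G)$. Thus $\Quad(P)$ and $\Quad(P')$ are linearly dependent, and if both are nonzero they are nonzero multiples of $\Quad(G)$, whence $\Roots(P)=\Roots(G)=\Roots(P')$.

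\textbf{Part \eqref{ratify3}.} This is almost immediate at the level of sequences: since $P=_{\CF}P'$, the two PCFs have identical convergents $\CC_n$, so one converges if and only if the other does and, in that case, $\hat P=\hat P'$. The common limit lies in $\Roots(P)\cap\Roots(P')$ because the limit of a convergent PCF always lies in its root multiset (recalled at the start of this section). It remains to upgrade this to the full equality $\Roots(P)=\Roots(P')$ when one of the quadratics vanishes, i.e.\ when some $E$ is scalar. If $k=k'$ this is part \eqref{ratify1}; if $k\ne k'$ and both quadratics are nonzero it is part \eqref{ratify2}. In the remaining case, say $E(P)=\pm I$, the matrix $G$ has finite order, so $E(P')=G^{k'/d}$ is either scalar or has distinct roots-of-unity eigenvalues; the latter, having eigenvalues of equal magnitude (Proposition \ref{tetrapartite}\eqref{tetrapartite4}), would force the decimations of $(\CC_n)$ to diverge, contradicting convergence. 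Hence $E(P')$ is scalar as well and $\Roots(P)=\PP^1(\BC)=\Roots(P')$.

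\textbf{Main obstacle.} The crux is the bookkeeping behind part \eqref{ratify3}, namely that convergence excludes two eigenvalues of equal absolute value. When $E(P)$ is of this type, each of the $k$ decimated subsequences of $(\CC_n)$ is controlled by a matrix with distinct eigenvalues of equal magnitude, so by Proposition \ref{tetrapartite}\eqref{tetrapartite4} each either diverges or is eventually constant; the elementary observation that consecutive convergents never coincide ($\CC_n\ne\CC_{n+1}$, since $D(x)\infty=x\ne\infty$) then rules out the degenerate possibility that all decimations are constant with a single common value. Once the identity $E(Q^{(m)})=E(Q)^m$ is in place, the rest is formal.
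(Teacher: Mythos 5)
Your argument is correct, and it rests on the same two pillars as the paper's proof --- concatenation of repeated parts becomes matrix powers under $\overline{E}$, and Cayley--Hamilton --- but it organizes part \eqref{ratify2} differently. The paper works with $E(P)$ and $E(P')$ directly: it gets $E(P)^{k'}=E(P')^{k}$ from concatenation and applies Cayley--Hamilton to that one identity to obtain a linear dependence among $E(P)$, $E(P')$, $I$, hence of the quadratics. You instead descend to the common period $d=\gcd(k,k')$ and write $E(P)=G^{k/d}$, $E(P')=G^{k'/d}$ for a single matrix $G=E(\widetilde P)$, so that Cayley--Hamilton exhibits $\Quad(P)$ and $\Quad(P')$ as explicit scalar multiples of $\Quad(G)$. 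This buys some robustness: in the paper's formulation the dependence among $E(P)$, $E(P')$, $I$ degenerates if the Cayley--Hamilton coefficients of both $E(P)$ and $E(P')$ vanish (i.e.\ both powers are scalar), whereas your two relations $\Quad(P)=\alpha\Quad(G)$, $\Quad(P')=\alpha'\Quad(G)$ give linear dependence unconditionally. The price is the Fine--Wilf-type fact that an eventually periodic sequence with periods $k$ and $k'$ is eventually $\gcd(k,k')$-periodic, which is true but should be stated as a lemma. For part \eqref{ratify1} you route through Proposition \ref{desktop} and the well-definedness of $\overline{E}$ on $\sim$-classes (Proposition \ref{prop:EmatchesM}), where the paper redoes the cancellation $M([a,0,-a])=M([0])$ by hand; this is the same content with better reuse of earlier results. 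Part \eqref{ratify3} is handled identically in both (the convergent sequences coincide); your extra analysis of the scalar case is sound but vacuous, since $E(P)$ scalar already forces $P$ to diverge (consecutive convergents differ), so that case cannot occur under the convergence hypothesis and you could simply note that convergence forces both quadratics to be nonzero and cite parts \eqref{ratify1} and \eqref{ratify2}.
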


\begin{proof}
\eqref{ratify1}: Suppose $k=k'$. If $N=N'$, then $P$ equals $P'$.
In the alternative $N \neq N'$, we may assume $N'>N$ with $N'=N+j$.
In this case,
$$P'=[b_1,\ldots, b_{N},b_{N+1},\ldots,b_{N+j},\overline{a_{j+1},\ldots , a_{j+k}}],$$
where we take the subscripts of the $a$'s $\bmod\ k$ to put them in the range 1 to $k$
and $b_i=a_{i-N}$ for $i>N$.
So
\begin{align*}
E(P')&=M([b_1,\ldots, b_{N},b_{N+1},\ldots,b_{N+j},a_{j+1},\ldots , a_{j+k},0,
-b_{N+j},\ldots,-b_1,0])\\
&=M([b_1,\ldots, b_{N},a_1,a_2,\ldots, a_{j+k},0,
   -a_j,\ldots,-a_1,-b_N,\ldots,-b_1,0])\\
&=M([b_1,\ldots, b_{N},a_1,a_2,\ldots, a_{j+k-1},0,
   -a_{j-1},\ldots,-a_1,-b_N,\ldots,-b_1,0])\\
& \qquad\qquad\qquad\qquad\qquad\qquad \vdots\\
&=M([b_1,\ldots, b_{N},a_1,\ldots, a_k,0,-b_{N},\ldots,-b_1,0])=E(P)
\end{align*}
using the fact that $M([a,0,-a])=M([0])$ and $a_i=a_{i+k}$ $j$ times.
The rest of \eqref{ratify1} follows trivially.

\eqref{ratify2}:  Since the choice of $N$ does not affect $E$,
we may assume $N=N'$.
The equality $E(P)^{k'}=E(P')^k$ follows from the observation that the concatenation of
$k'$ copies of $[a_1,\ldots, a_k]$ equals the concatenation of $k$ copies of
$[a'_1,\ldots , a'_{k'}]$.
The Cayley--Hamilton theorem implies that any power of a $2\times2$
matrix $M$ is a linear combination of $M$ and $I$.
This implies $E(P)$, $E(P')$, and $\Idd$ are
linearly dependent, which implies $\Quad(P)$ and $\Quad(P')$ are linearly dependent,
which implies they have the same roots if both are nonzero.

\eqref{ratify3}: This is trivial,
since $P$ and $P'$ are $\CF$-equal.
\end{proof}

For the PCF $P=[b_1,\ldots, b_N,\overline{a_1,\ldots, a_k}]$
of type $(N,k)$, we introduce notation 
for the matrix $M_{N+j}(P)^{-1}M_{N+j+k}(P)$ and its entries:
$$
 R_j(P) = M_{N+j}(P)^{-1}M_{N+j+k}(P) = M([a_{j+1},\ldots, a_k, a_1,\ldots, a_j])=
\begin{bmatrix} r_j(P) & s_j(P) \\ t_j(P) & u_j(P) \end{bmatrix},
$$
and for the limits, if they exist,
\begin{equation}
\label{stuffing}
\hat{P}_j = \lim_{n\to\infty} \CC_{N+j+nk}(P).
\end{equation}

\begin{definition1}
The matrix  $R_j(P)$ is \textsf{heavy} when
\begin{equation*}
t_j(P) = 0\ \textrm{and}\ |u_j(P)| > 1.
\end{equation*}
Rephrased with this terminology, the condition
\cite[Thm.~4.3(b), INEQ 4.1]{bej} says that $R_j(P)$ is heavy for some
$0\le j\le k-1$.
\end{definition1}

The matrix $R_0(P)$ is $M(\Rep(P))$. The matrices $R_j(P)$ are all conjugate to $E(P)$.
From the definition, we see $\hat{P}_j = \hat{P}_{j+k}$.

\begin{proposition}
\label{betaMinus}
Let $P$ be a PCF.  Suppose $R_j(P)$ is heavy.
\begin{enumerate}
\item \label{betaMinus1} 
The limit $\hat{P}_j$ exists 
and $v(\hat{P}_j)$ is an $r_j(P)$-eigenvector of $E(P)$.
\item\label{betaMinus2} 
$R_{j+1}(P)$ is not heavy.
\end{enumerate}
\end{proposition}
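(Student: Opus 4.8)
The plan is to factor the convergent matrices of $P$ through $R_j(P)$ and to observe that heaviness makes $\infty$ a fixed point of the linear fractional action of $R_j(P)$, so that the $j$-th decimated sequence is in fact \emph{constant}; the eigenvector assertion and part \eqref{betaMinus2} then reduce to bookkeeping with two conjugation relations. First I would record that, since the partial quotients repeat with period $k$ past index $N$, each block of $k$ consecutive factors $D(\cdot)$ beginning after position $N+j$ multiplies to $R_j(P)=M([a_{j+1},\dots,a_k,a_1,\dots,a_j])$, whence
\[
M_{N+j+nk}(P)=M_{N+j}(P)\,R_j(P)^n\qquad(n\ge0).
\]
Writing $\CC_m(P)=M_m(P)_{11}/M_m(P)_{21}=M_m(P)\,\infty$ for the linear fractional action on $\infty\in\PP^1(\BC)$, this gives $\CC_{N+j+nk}(P)=M_{N+j}(P)\bigl(R_j(P)^n\,\infty\bigr)$.

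For \eqref{betaMinus1}, heaviness says $R_j(P)=\left[\begin{smallmatrix}r_j&s_j\\0&u_j\end{smallmatrix}\right]$ with $|u_j|>1$; as $R_j(P)\in\SLTpm(\OO)$, the relation $\det R_j(P)=r_ju_j=\pm1$ forces $|r_j|=|u_j|^{-1}<1$, so in particular $r_j\ne0$ and $R_j(P)\,\infty=\infty$. Hence $R_j(P)^n\,\infty=\infty$ for all $n$, the decimated sequence $\CC_{N+j+nk}(P)$ is constant, and $\hat P_j=M_{N+j}(P)\,\infty$ exists (equivalently the case $\beta=\beta_-=\infty$ of Proposition \ref{tetrapartite}\eqref{tetrapartite3}, since $|u_j|>|r_j|$). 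To identify the eigenvector I would use the conjugation $E(P)=M_{N+j}(P)\,R_j(P)\,M_{N+j}(P)^{-1}$, which follows by combining $E(P)=M(\In(P))\,M(\Rep(P))\,M(\In(P))^{-1}$ (from $\SF(P)=\In(P)\star\Rep(P)\star\In(P)^\ast$ together with $M(\In(P)^\ast)=M(\In(P))^{-1}$ of Remark \ref{feast}) with the cyclic conjugacy $M(\Rep(P))=M([a_1,\dots,a_j])\,R_j(P)\,M([a_1,\dots,a_j])^{-1}$ and $M_{N+j}(P)=M(\In(P))\,M([a_1,\dots,a_j])$. By \eqref{thunder}, $v(\infty)=\vect10$ is an eigenvector of $R_j(P)$ with eigenvalue $\lambda(\infty)=r_j$; applying $M_{N+j}(P)$ and using that the column-vector action covers the action on $\PP^1(\BC)$ shows $v(\hat P_j)\propto M_{N+j}(P)\,v(\infty)$ is an $r_j(P)$-eigenvector of $E(P)$.

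For \eqref{betaMinus2}, the key identity is $R_{j+1}(P)=D(a_{j+1})^{-1}R_j(P)\,D(a_{j+1})$, obtained by cyclically advancing one partial quotient. A direct $2\times2$ computation with $R_j(P)$ upper triangular yields lower-right entry $u_{j+1}(P)=r_j(P)$ (forced in any case because $R_{j+1}(P)$ is conjugate to $R_j(P)$ and so shares the eigenvalues $\{r_j,u_j\}$). Since $|u_{j+1}(P)|=|r_j|<1$, the inequality $|u_{j+1}(P)|>1$ required for heaviness fails, so $R_{j+1}(P)$ is not heavy whatever the value of $t_{j+1}(P)$.

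The argument is essentially structural, so I expect no deep obstacle; the one point needing care is fixing the two conjugations above correctly and keeping straight that the eigenvalue transported to $\hat P_j$ is the \emph{small} one, $|r_j|<1$. Indeed the whole proposition is driven by the single inequality $|r_j|<1<|u_j|$ extracted from $\det R_j(P)=\pm1$.
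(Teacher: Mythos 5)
Your proposal is correct and follows essentially the same route as the paper: part (a) via $M_{N+j+nk}(P)=M_{N+j}(P)R_j(P)^n$, the fixed point $R_j(P)\infty=\infty$, and the conjugation $E(P)=M_{N+j}(P)R_j(P)M_{N+j}(P)^{-1}$ transporting the $r_j$-eigenvector $v(\infty)$; part (b) via the same computation $R_{j+1}=D(a)^{-1}R_jD(a)$ giving $u_{j+1}=r_j$ with $|r_j|=|u_j|^{-1}<1$. Your write-up is if anything slightly more explicit than the paper's in deriving the conjugation and in stating that $\det R_j=\pm1$ is what forces $|r_j|<1$.
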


\begin{proof}
\eqref{betaMinus1}:
The heavy condition implies $R_j(P)$ has an eigenvector
$\vect{1}{0}$ with eigenvector $r_j(P)$.  Therefore,
$E=M_{N+j}R_{j}M_{N+j}(P)^{-1}$ has eigenvector $M_{N+j}(P)\vect{1}{0}$.
We compute the limit
\[
\hat{P}_j=\lim_{n\to\infty}M_{N+j+nk}(P)\infty
=\lim_{n\to\infty}M_{N+j}(P)R_j^n\infty=M_{N+j}\infty.
\]
We are now done because $v(\hat{P}_j)$ and $M_{N+j}(P)\vect{1}{0}$ are both
elements of the equivalence class $M_{N+j}(P)\infty$.

\eqref{betaMinus2}: The calculation, with argument $P$ suppressed and $a = a_{j+1-nk}$, for the value of
$nk$ such that $1 \leq j+1-nk \leq k$,
$$
\begin{bmatrix}r_{j+1} & s_{j+1} \\ t_{j+1} & u_{j+1}\end{bmatrix} =
\begin{bmatrix}0 & 1 \\ 1 & -a\end{bmatrix}
\begin{bmatrix}r_j & s_j \\ 0 & u_j\end{bmatrix}
\begin{bmatrix}a & 1 \\ 1 & 0\end{bmatrix}
=
\begin{bmatrix}u_j  & 0 \\ 
(r_j-u_j)a+s_j & r_j\end{bmatrix}
$$
shows that $R_j(P)$ and $R_{j+1}(P)$ cannot both be heavy.
If they are, $u_{j+1} = r_{j}$, yielding $|u_{j+1}| < 1$, a contradiction.  
\end{proof}

We recall a theorem proved in the appendix of \cite{bej}, with the cases aligned to 
match Proposition \ref{tetrapartite}. 

\begin{theorem}\label{bactrian}
\textup{(\cite[Thm.~4.3]{bej}) }
Let $P$ be a PCF, let $\lambda_\pm$ be the eigenvalues of $E = E(P)$ chosen
so that $|\lambda_+| \ge 1\ge|\lambda_-|$, and let $v(\beta_\pm)$ be the
corresponding eigenvectors.
If $P$ converges, its limit $\hat P=\beta_+$.
Exactly one of the following holds.
\begin{enumerate}
\item\label{bactrian1}
$\Quad(P)$ has one root $\beta_+=\beta_-$ of multiplicity $2$, 
and
$\hat{P}=\beta_+=\beta_-.$

\item\label{bactrian2}
$\Quad(P) = 0$, $E=\lambda_+\Idd=\lambda_-\Idd$, and $P$ diverges.

\item\label{bactrian3}
$\Quad(P)$ has roots $\beta_+, \beta_-$ with $|\lambda_+| > |\lambda_-|$.

\begin{enumerate}
\item\label{bactrian3a}
For some $j \geq 0$, $R_j(P)$ is heavy, $\hat{P}_j = \beta_-$, 
$\hat{P}_{j+1}=\beta_+$,
and $P$ diverges.
\item\label{bactrian3b}
For all $j\geq 0$, $R_j(P)$ is not heavy, and $P$ converges to $\hat P = \beta_+$.
\end{enumerate}
\item\label{bactrian4}
$\Quad(P)$ has distinct roots $\beta_+, \beta_-$ with $|\lambda_+|=|\lambda_-|$,
and $P$ diverges. In fact, $\hat{P}_j$ does not exist for some $j$.
\end{enumerate}
\end{theorem}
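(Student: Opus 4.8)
The plan is to reduce the whole statement to Proposition~\ref{tetrapartite} applied to the conjugates $R_j(P)$ of $E(P)$. Because the partial quotients repeat with period $k$ beyond the $N$th, one has $M_{N+j+nk}(P)=M_{N+j}(P)R_j(P)^n$, so that
\[
\CC_{N+j+nk}(P)=M_{N+j+nk}(P)\infty=M_{N+j}(P)R_j(P)^n\infty .
\]
Since $M_{N+j}(P)$ acts on $\PP^1(\BC)$ by a homeomorphism, $\hat P_j$ exists precisely when $\lim_{n\to\infty}R_j(P)^n\infty$ exists, and then $\hat P_j=M_{N+j}(P)\lim_{n\to\infty}R_j(P)^n\infty$. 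As $R_j(P)=M_{N+j}(P)^{-1}E(P)M_{N+j}(P)$, the matrices $R_j(P)$ and $E(P)$ share the eigenvalues $\lambda_\pm$, and $M_{N+j}(P)$ carries the eigendirections of $R_j(P)$ to $\beta_\pm$. Thus the four mutually exclusive cases of Proposition~\ref{tetrapartite} for $E(P)$ are precisely cases \eqref{bactrian1}--\eqref{bactrian4}, and I would translate the behavior of $R_j(P)^n\infty$ into that of $\hat P_j$ in each.

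In case~\eqref{bactrian1} every $R_j(P)$ is defective with the same repeated eigenvalue, so Proposition~\ref{tetrapartite}\eqref{tetrapartite1} gives $R_j(P)^n\infty\to$ its unique fixed direction and hence $\hat P_j=\beta_+=\beta_-$ for all $j$; the decimations agree, so $P$ converges to $\beta_+$. In case~\eqref{bactrian2}, $\Quad(P)=0$ forces $E(P)$, and hence every $R_j(P)$, to be scalar; each $R_j(P)$ then fixes $\infty$, so the $j$th decimation is eventually the constant $\CC_{N+j}(P)$. These $k$ values $\CC_{N}(P),\dots,\CC_{N+k-1}(P)$ are not all equal, so $P$ diverges.

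Case~\eqref{bactrian3} is where the \textsf{heavy} condition enters. The key observation is that when $t_j(P)=0$ the matrix $R_j(P)$ is upper triangular with eigenvalues $r_j(P),u_j(P)$ of product $\det E(P)=\pm1$, so $v(\infty)=\vect{1}{0}$ is its $\lambda_-$-eigendirection exactly when $|u_j(P)|>1$; that is, $\infty$ is the repelling direction of $R_j(P)$ if and only if $R_j(P)$ is heavy. If $R_j(P)$ is heavy then $\infty$ is fixed, $R_j(P)^n\infty=\infty$, and $\hat P_j=M_{N+j}(P)\infty=\beta_-$; Proposition~\ref{betaMinus}\eqref{betaMinus2} then makes $R_{j+1}(P)$ non-heavy, so $\infty$ is not its repelling direction and Proposition~\ref{tetrapartite}\eqref{tetrapartite3} yields $\hat P_{j+1}=\beta_+$. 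As $\beta_+\neq\beta_-$ the decimations disagree and $P$ diverges, giving~\eqref{bactrian3a}. If no $R_j(P)$ is heavy, then $\infty$ is never a repelling direction, so $R_j(P)^n\infty$ tends to the attracting direction of $R_j(P)$ and $\hat P_j=\beta_+$ for every $j$, giving convergence to $\beta_+$ as in~\eqref{bactrian3b}.

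The hard case is~\eqref{bactrian4}, where the $R_j(P)$ have distinct eigenvalues of equal modulus. By Proposition~\ref{tetrapartite}\eqref{tetrapartite4}, $\hat P_j$ exists if and only if $v(\infty)$ is an eigendirection of $R_j(P)$, i.e.\ $t_j(P)=0$, and the substance of the claim is that $t_j(P)$ cannot vanish for every $j$. I would argue by contradiction: the relation $R_{j+1}(P)=D(a_{j+1})^{-1}R_j(P)D(a_{j+1})$ carries the eigendirection $v(\infty)$ of $R_j(P)$ to $v(0)$, so if every $R_j(P)$ also fixed $\infty$ then each would have the two distinct eigendirections $v(0)$ and $v(\infty)$ and therefore be diagonal; diagonality of consecutive $R_j(P)$ under the same conjugation forces every $a_j=0$, making $R_0(P)=D(0)^k$ equal to $I$ or $J$, neither of which is diagonal with distinct eigenvalues. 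This contradiction shows some $t_j(P)\neq0$, so that $\hat P_j$ fails to exist and $P$ diverges. Finally, the cases are mutually exclusive because those of Proposition~\ref{tetrapartite} are, and collecting the convergent cases~\eqref{bactrian1} and~\eqref{bactrian3b} shows that whenever $P$ converges its limit equals $\beta_+$. I expect the non-vanishing argument in case~\eqref{bactrian4} to be the main obstacle; the other cases are essentially direct translations of Proposition~\ref{tetrapartite}.
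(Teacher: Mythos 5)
Your proposal is correct and follows essentially the same route as the paper: reduce each decimation to the dynamics of $R_j(P)^n\infty$ via Proposition \ref{tetrapartite}, handle case \eqref{bactrian3} with the heavy condition and Proposition \ref{betaMinus}, and rule out $t_j(P)=0$ for all $j$ in case \eqref{bactrian4} by the conjugation relation $R_{j+1}=D(a_{j+1})^{-1}R_jD(a_{j+1})$ forcing every $a_j=0$ and hence $R_0=J^k$, contradicting either diagonality or distinctness of eigenvalues. Your eigendirection phrasing of the case-\eqref{bactrian4} contradiction is just a coordinate-free restatement of the paper's entrywise computation in \eqref{pie}, so there is no substantive difference in method.
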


\begin{proof}
In case \eqref{bactrian1}, by Proposition \ref{tetrapartite}\eqref{tetrapartite1},
for all $j$, $\hat{P}_j= \beta_+$, so $P$ converges to $\beta_+$.

In case \eqref{bactrian2},
$R_{j}(P) = E$ for all $j,$ so
$\hat{P}_j= \CC_{N+j}(P)$.
No two consecutive ones are equal: $\hat{P}_j\ne\hat{P}_{j+1}$ by the 
more general
rule that no two consecutive convergents can be equal, so $P$ diverges.

Now assume that cases \eqref{bactrian1} and \eqref{bactrian2} do not hold.
$\Quad(P)$ has two distinct roots, $\beta_-$, $\beta_+$.  The matrix
$E$ has corresponding distinct eigenvalues
$\lambda_-$, $\lambda_+$.  

In case \eqref{bactrian3}, $|\lambda_+| > |\lambda_-|,$ and
Proposition 
\ref{tetrapartite}\eqref{tetrapartite3} shows all $\beta_j$ exist.
Suppose, as in the first subcase, there exists $j$ such that $R_j(P)$ is heavy,
Proposition \ref{betaMinus} shows $\hat{P}_j= \beta_-$ and $R_{j+1}$ is not heavy.
Proposition \ref{tetrapartite}\eqref{tetrapartite3} 
shows $\hat{P}_{j+1}=\beta_+,$ so $P$ diverges.

In the alternative subcase of \eqref{bactrian3}, where there
is no $j$ such that $R_j$ is heavy, we have $\hat{P}_j= \beta_+$ for all $j$, 
and
$P$ converges to $\beta_+$.

In case (\ref{bactrian4}) of divergence, we have  $|\lambda_+|=|\lambda_-|$
and $\lambda_+\ne\lambda_-$.  By Proposition \ref{tetrapartite}\eqref{tetrapartite4},
the limit $\hat{P}_j$ exists if and only if $t_j=t_j(P)=0$.
We claim there exists $j$ such that $t_j(P)\ne0$.

Assume, to the contrary, that $t_j = 0$ for all $j$.
To deduce a contradiction, we show $t_j=t_{j+1}=t_{j+2}=0$ implies $a_{j+2}=0$.
From the equality
\begin{equation}\label{pie}
R_{j+1}\begin{bmatrix}a_{j+2} & 1 \\ 1 & 0\end{bmatrix}
=\begin{bmatrix}a_{j+2} & 1 \\ 1 & 0\end{bmatrix}R_{j+2},
\end{equation}
we see that $t_{j+1}=t_{j+2}=0$ implies $s_{j+2}=0$.
Similarly, $t_{j}=t_{j+1}=0$ implies $s_{j+1}=0$, so $R_{j+1}$ and $R_{j+2}$ are
diagonal with distinct diagonal entries $\lambda_\pm$.
Equation \eqref{pie} implies 
$a_{j+2}=0$.

Since $t_j = 0$ for all $j$, we have that $a_j = 0$ for all $j$.
If $k$ is odd then $t_j=1$ for every j, a contradiction.
If $k$ is even then we are in case (\ref{bactrian2}), also a contradiction,
thus proving the claim.
Hence, for some $j$, $\hat{P}_j$ does not exist, and $P$ diverges.
\end{proof}

We name the convergence behaviors of a PCF $P$ identified in Theorem \ref{bactrian}.

\begin{definition1} 
\label{delivery}
In cases \ref{bactrian}\eqref{bactrian1} and
\ref{bactrian}\eqref{bactrian3}, $P$ is \textsf{quasiconvergent}.
In cases \ref{bactrian}\eqref{bactrian2} and
\ref{bactrian}\eqref{bactrian4}, $P$ is \textsf{strictly divergent}.
In the divergent subcase of \ref{bactrian}\eqref{bactrian3a},
$P$ is \textsf{strictly quasiconvergent}.
\end{definition1}
\noindent Divergent means strictly divergent or strictly quasiconvergent.
Quasiconvergent means convergent or strictly quasiconvergent.

\begin{theorem}
\label{tree}
Let PCFs $P$ and $Q$ be equivalent.
$P$ is quasiconvergent if and only if $Q$ is quasiconvergent.
If $P$ and $Q$ both converge, then 
$\hat P =\hat Q$.
\end{theorem}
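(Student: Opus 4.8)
The plan is to reduce both assertions to a single structural fact: equivalent PCFs share the same fundamental matrix $E$. First I would observe that if $P\sim Q$, then by Definition~\ref{wealth} we have $\SF(P)\sim\SF(Q)$ in $\FCF(\OO)$, and since Proposition~\ref{prop:mapSLT} shows that $\MF$ is constant on $\sim$-classes, applying $\MF$ gives
\[
E(P)=\MF(\SF(P))=\MF(\SF(Q))=E(Q).
\]
In particular $\Quad(P)=\Quad(Q)$, and $E(P)$, $E(Q)$ have the same eigenvalues $\lambda_\pm$ and the same roots $\beta_\pm$.

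Next I would settle the quasiconvergence equivalence. Theorem~\ref{bactrian} sorts every PCF into one of four mutually exclusive cases according only to $\Quad(P)$ (whether it is zero, a perfect square, or has distinct roots) together with the magnitudes of the eigenvalues $\lambda_\pm$ of $E(P)$; by Definition~\ref{delivery}, quasiconvergence is exactly membership in case~\eqref{bactrian1} or case~\eqref{bactrian3}. Since $E(P)=E(Q)$ forces $P$ and $Q$ into the same one of these four cases, $P$ is quasiconvergent if and only if $Q$ is. I would stress that convergence itself is \emph{not} an invariant of $E$: within case~\eqref{bactrian3} the split between the convergent and the strictly quasiconvergent subcases is governed by whether some $R_j(P)$ is heavy, which depends on the actual partial quotients and not on $E(P)$ alone. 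Hence I must claim invariance only for quasiconvergence, never for convergence.

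For the limit, Theorem~\ref{bactrian} asserts that whenever a PCF converges its limit equals $\beta_+$, the root of $\Quad$ attached to the eigenvalue of magnitude at least $1$ (with $|\lambda_+|\ge 1\ge|\lambda_-|$). Because $\beta_+$ is determined by $E(P)$ alone and $E(P)=E(Q)$, the two PCFs share the same $\beta_+$; consequently, if $P$ and $Q$ both converge, then $\hat P=\beta_+=\hat Q$, which is the final assertion.

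There is no deep obstacle here: the whole argument rests on the observation that $\sim$ preserves $E$, and that Theorem~\ref{bactrian} expresses both the case-type and the value of the limit purely in terms of $E$. The single point demanding care is precisely the one flagged above—resisting the temptation to deduce that $\sim$ respects convergence, since the heaviness of the matrices $R_j(P)$ is genuinely finer data than the matrix $E(P)$, a subtlety made explicit by the examples in Section~\ref{examples}.
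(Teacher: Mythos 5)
Your proposal is correct and follows essentially the same route as the paper: the paper's own (two-sentence) proof likewise rests entirely on the fact that $P\sim Q$ forces $E(P)=E(Q)$, so that the case-division of Theorem \ref{bactrian} and the value $\beta_+$ of any limit are shared. Your additional care in noting that convergence itself is not an invariant of $E$ is accurate and consistent with the examples in Section \ref{examples}, but it is elaboration rather than a departure from the paper's argument.
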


\begin{proof}
PCFs $P$ and $Q$ are quasiconvergent together,
because $E(P) = E(Q)$.
For the same reason, if $P$ and $Q$ both converge, they converge to the same value.
\end{proof}
The following lemma and theorem characterize strict quasiconvergence.

\begin{lemma}
\label{stepbytwo}
Let $P=[b_1,\ldots, b_N, \overline{a_1,\ldots, a_k}]$ be a PCF.  If
$R_j(P)$ and $R_{j+2}(P)$ are both heavy, then $a_{j+2-n'k}=0$ for the value of $n'$ such that
$1 \leq j+2-n'k \leq k$.
\end{lemma}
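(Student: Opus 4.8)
The plan is to propagate the heaviness condition through the conjugation relating consecutive matrices, $R_{i+1}(P) = D(a_{i+1})^{-1} R_i(P) D(a_{i+1})$ (indices read $\bmod\ k$), and then to play off the algebraic vanishing $t_{j+2}(P)=0$ against the magnitude inequality $|u_{j+2}(P)|>1$ that heaviness also demands. Throughout I suppress the argument $P$ and write $a=a_{j+2-n'k}$ for the partial quotient appearing in the step from $R_{j+1}$ to $R_{j+2}$.

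First I would read off the shape of $R_{j+1}$. Since $R_j$ is heavy we have $t_j=0$, so the computation already recorded in Proposition \ref{betaMinus}\eqref{betaMinus2} gives
\[
R_{j+1}=\begin{bmatrix} u_j & 0 \\ (r_j-u_j)a_{j+1}+s_j & r_j \end{bmatrix}.
\]
The two features I need are $s_{j+1}=0$ and $u_{j+1}=r_j$. In addition, $\det R_j = r_j u_j = \pm 1$ because $t_j=0$ and $R_j$ is conjugate to $E(P)\in\SLTpm(\OO)$; combined with $|u_j|>1$ this yields $|u_{j+1}|=|r_j|=1/|u_j|<1$.

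Next I would apply the conjugation a second time to compute $R_{j+2}=D(a)^{-1}R_{j+1}D(a)$. Using $s_{j+1}=0$, a direct multiplication shows the lower-left entry factors through $a$:
\[
t_{j+2}=a\,(u_{j+2}-u_{j+1}),\qquad u_{j+2}=r_{j+1}-a\,t_{j+1}.
\]
Heaviness of $R_{j+2}$ supplies $t_{j+2}=0$, forcing either $a=0$ or $u_{j+2}=u_{j+1}$. The latter is impossible: it would give $|u_{j+2}|=|u_{j+1}|<1$, contradicting $|u_{j+2}|>1$. Hence $a=a_{j+2-n'k}=0$.

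The two matrix products are routine; the substance is in two small observations. First, heaviness of $R_j$ annihilates the entry $s_{j+1}$, and this is precisely what makes $t_{j+2}$ factor as $a(u_{j+2}-u_{j+1})$ rather than acquiring a constant term that would spoil the dichotomy. Second, the determinant normalization converts $|u_j|>1$ into $|u_{j+1}|<1$, so that carrying $u_{j+1}=r_j$ forward places the magnitude contradiction exactly on $u_{j+2}$. The only real obstacle is this bookkeeping; once the factorization and the inequality are lined up, the conclusion is immediate.
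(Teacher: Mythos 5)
Your proof is correct and is essentially the paper's argument: the paper conjugates by $D(a)D(a')$ in one step and factors $t_{j+2}=a'(u_j-r_{j+2})$, which by trace-invariance ($r_j+u_j=r_{j+2}+u_{j+2}$) is literally your factorization $t_{j+2}=a'(u_{j+2}-u_{j+1})$, and both arguments then rule out the non-$a'=0$ branch by the same $\det=\pm1$ magnitude dichotomy. Your version just splits the conjugation into two single steps and makes the determinant normalization explicit, which the paper leaves implicit.
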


\begin{proof}
Inspect the relation, with argument $P$ suppressed, $a = a_{j+1-nk}$, and $a' = a_{j+2-n'k}$:
\[
\begin{bmatrix} r_{j+2} & s_{j+2} \\ t_{j+2} & u_{j+2} \end{bmatrix} = 
\begin{bmatrix} 1 & -a \\ -a' & aa'+1 \end{bmatrix}
\begin{bmatrix} r_{j} & s_{j} \\ t_{j} & u_{j} \end{bmatrix} 
\begin{bmatrix} aa'+1 & a \\ a' & 1 \end{bmatrix}.
\]
Observe that $t_j=0$ implies
\begin{gather*}
r_{j+2} = r_j + aa'r_j -aa'u_j + a's_j,\\
t_{j+2} = a'(u_j-r_{j+2}).
\end{gather*}
If $t_{j+2} = 0$ and $a' \ne 0$, then $r_{j+2} = u_{j}$, contradicting
$|u_j|>1>|r_{j+2}|$.
Hence, $t_{j+2}=0$ implies $a' = a_{j+2-n'k} = 0.$
\end{proof}

\begin{theorem}
\label{charquasicon}
The PCF $P$ is strictly quasiconvergent if and only if
for each $j$, $1\leq j \leq k$,  $\hat{P}_j$
exists, a strict majority, but not all, of which are the same.
\end{theorem}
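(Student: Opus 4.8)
The plan is to read both directions off the case analysis of Theorem~\ref{bactrian}, exploiting the identification of strict quasiconvergence with case~\ref{bactrian}\eqref{bactrian3a}, that is, case~\ref{bactrian}\eqref{bactrian3} together with at least one heavy $R_j(P)$. The backbone is a single counting statement: in case~\ref{bactrian}\eqref{bactrian3} every $\hat P_j$ exists and equals $\beta_+$ or $\beta_-$, it equals $\beta_-$ exactly when $R_j(P)$ is heavy, and the number of heavy indices among $1\le j\le k$ is \emph{strictly} less than $k/2$. Granting this, both implications are immediate bookkeeping.

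First I would set up the dictionary between the limits $\hat P_j$ and heaviness. In case~\ref{bactrian}\eqref{bactrian3} each $R_j(P)$ is conjugate to $E(P)$, hence has distinct eigenvalues of different magnitude, so Proposition~\ref{tetrapartite}\eqref{tetrapartite3} guarantees that $\hat P_j=M_{N+j}(P)\lim_{n}R_j(P)^n\infty$ exists. If $R_j(P)$ is heavy, Proposition~\ref{betaMinus}\eqref{betaMinus1} gives $\hat P_j=\beta_-$; if it is not heavy, then $\infty$ is not the repelling fixed point of $R_j(P)$ (being the repelling direction is precisely $t_j=0$ with $\lvert u_j\rvert>1$), so $\lim_n R_j(P)^n\infty$ is the attracting fixed point and $\hat P_j=\beta_+$. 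Thus $\hat P_j\in\{\beta_+,\beta_-\}$ with $\beta_+\ne\beta_-$, and $\hat P_j=\beta_-$ iff $R_j(P)$ is heavy.

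The real work is the bound on heavy indices, and the step I expect to be the main obstacle is the even-$k$ case. Using $R_{j+k}(P)=R_j(P)$, heaviness depends only on $j\bmod k$, and Proposition~\ref{betaMinus}\eqref{betaMinus2} shows no two cyclically consecutive indices are both heavy, giving at most $\lfloor k/2\rfloor$ heavy indices. For odd $k$ this is already $<k/2$. When $k$ is even I must rule out exactly $k/2$ heavy indices: in that situation they alternate, hence are closed under $j\mapsto j+2$, so $R_j(P)$ and $R_{j+2}(P)$ are both heavy for every heavy $j$, and Lemma~\ref{stepbytwo} forces an entire parity class of the partial quotients $a_i$ to vanish. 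Collapsing $\Rep(P)$ via $D(x)D(0)D(y)=D(x+y)$ then shows $R_0(P)=M(\Rep(P))$ is a single $U$- or $L$-factor, hence unipotent with a repeated eigenvalue; since $R_0(P)$ is conjugate to $E(P)$, this contradicts the distinct eigenvalues of case~\ref{bactrian}\eqref{bactrian3}. Therefore fewer than $k/2$ indices are heavy, so $\beta_+$ occurs as $\hat P_j$ for a strict majority of $j$.

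Finally I would assemble both directions. Forward: strict quasiconvergence is case~\ref{bactrian}\eqref{bactrian3a}, where all $\hat P_j$ exist, $\beta_+$ is a strict majority by the counting bound, and a heavy $j$ (with $\hat P_j=\beta_-$) together with its necessarily non-heavy successor (with $\hat P_{j+1}=\beta_+$, by Proposition~\ref{betaMinus}\eqref{betaMinus2}) shows they are not all equal. Converse: assuming all $\hat P_j$ exist with a strict majority but not all equal, I eliminate the remaining cases of Theorem~\ref{bactrian}. Case~\ref{bactrian}\eqref{bactrian4} is excluded since some $\hat P_j$ fails to exist; cases~\ref{bactrian}\eqref{bactrian1} and~\ref{bactrian}\eqref{bactrian3b} give $\hat P_j=\beta_+$ for all $j$, violating ``not all equal''; and in case~\ref{bactrian}\eqref{bactrian2} one has $R_j(P)=E$ scalar, so $\hat P_j=\CC_{N+j}(P)$ with no two cyclically consecutive convergents equal, whence no value has multiplicity exceeding $\lfloor k/2\rfloor$, contradicting the strict majority. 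Only case~\ref{bactrian}\eqref{bactrian3a} survives, so $P$ is strictly quasiconvergent.
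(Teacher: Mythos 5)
Your proof is correct and follows essentially the same route as the paper: it reduces both directions to the case analysis of Theorem \ref{bactrian}, uses Proposition \ref{betaMinus} to identify $\hat P_j=\beta_-$ exactly at heavy indices, and invokes Lemma \ref{stepbytwo} to rule out an alternating (half-and-half) pattern of heavy indices when $k$ is even. Your handling of odd $k$ (where $\lfloor k/2\rfloor<k/2$ is automatic) and your citing of the nonexistence claim in Theorem \ref{bactrian}\eqref{bactrian4} for the converse are mild streamlinings of the paper's argument, not a different method.
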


\begin{proof}
Let the eigenvalues of $E = E(P)$ be $\lambda_+$ and $\lambda_-$, with $|\lambda_+| \geq 1$.
If both eigenvalues have magnitude $1$, arbitrarily assign one to be $\lambda_+$.
Define $\beta_+$ and $\beta_-$ such that $v(\beta_+)$, $v(\beta_-)$
are eigenvectors of $E$  with corresponding eigenvalues $\lambda_+$ and $\lambda_-$.

If $P$ is strictly quasiconvergent, then there exists $j$ such
that $R_j(P)$ is heavy and
 $\lambda_+ = u_j.$

Proposition \ref{betaMinus} shows that
for every $j$ such that $R_j(P)$ is heavy,
$\hat{P}_j = \beta_-$
and $R_{j+1}(P)$ is not heavy.
For those $j$ such that $R_j(P)$ is not heavy,
$
\hat{P}_j = \beta_+.
$

If $R_j(P)$ is heavy for every even $j$ or every odd $j$,
then either $k$ is even and, by Lemma \ref{stepbytwo}, every second
partial quotient of $\Rep(P)$ is zero, or $k$ is odd, and all partial quotients of $\Rep(P)$
are zero.
In the even case, $E$ is conjugate to an elementary matrix and $P$ satisfies 
\ref{bactrian}\eqref{bactrian1} or \ref{bactrian}\eqref{bactrian2}, a contradiction.
In the odd case, $E$ is conjugate to the matrix $J$ 
of Remark \ref{butter} and $P$ satisfies \ref{bactrian}\eqref{bactrian4}.
Hence, for only a minority of $j$, $1 \leq j \leq k$,
can $R_j(P)$ be heavy and $\hat{P}_j= \beta_-$.

Conversely, suppose
for each $j$, $1\leq j \leq k$,  $\hat{P}_j$
exists, a strict majority, but not all, of which are the same.

If $P$ satisfies Theorem \ref{bactrian}(\ref{bactrian2}), then
$\hat{P}_j \ne \hat{P}_{j+1}$.
So a majority of the $\hat{P}_j$, $1\leq j\leq k$, are not equal to a
particular value.

The PCF $P$
satisfies Theorem \ref{bactrian}(\ref{bactrian4}) if and only if
$E$ has eigenvalues $\lambda_+\ne\lambda_-$ with $|\lambda_+|=|\lambda_-|=1$.
We claim that for $1\leq j\leq k$, either $\hat{P}_j$ does not exist, or 
$\hat{P}_j$ exists and
 $\CC_{N+j+nk}(P)$ is a constant independent of $n$.
We shall prove this for $j=k$.  The result follows for all $j$ by rotating the
periodic part of $P$.
Write
$$v(\hat{P}_k) = c v(\beta_+) + d v(\beta_-),$$
and observe
$$E^n v(\hat{P}_k) = c \lambda_{+}^n v(\beta_+) + d \lambda_{-}^n v(\beta_-).$$
If $\hat{P}_k$ exists, then $v(\hat{P}_k)$ is an eigenvector of $E$, 
so either $c=0$ or $d=0$.
This proves the claim.

By assumption that the limits $\hat{P}_j$ exist and the fact that consecutive
convergents cannot be equal, we cannot have a strict majority of $\hat{P}_j$ agree and
$P$ satisfy Theorem \ref{bactrian}(\ref{bactrian4})

The only possibility left is that $P$ satisfies
Theorem \ref{bactrian}(\ref{bactrian3a}),
so $P$ is strictly quasiconvergent.
\end{proof}

\section{Examples for quasiconvergence}
\label{examples}

\subsection{Strict majority in Theorem \ref{charquasicon} is necessary}
\label{pumpkin1}
The strict majority hypothesis in the sufficiency part of the proof of
Theorem \ref{charquasicon}  is essential.
For example, $P = [a,b,\overline{0,0}]$ has limits $\hat{P}_1=a$
and $\hat{P}_2=a+1/b$.  The split is equal and
$P$ is not quasiconvergent.
It is strictly divergent because it satisfies Theorem \ref{bactrian}\eqref{bactrian2}.

\subsection{The fraction of \texorpdfstring{\except{toc}{\boldmath{$\hat{P}_j=\beta_+$}}\for{toc}{$\hat{P}_j=\beta_+$}}
{\textbeta\textjinferior(P)=\textbeta\textplusinferior} can be made arbitrarily close to \texorpdfstring{\except{toc}{\boldmath{$1/2$}}\for{toc}{$1/2$}}{\textonehalf}}
\label{pumpkin2}
We can make the fraction of $\hat{P}_j= \beta_+$ arbitrarily close to $1/2$.
Let $c \ne 0$ and
$$P=[a,b,\overline{c,-1/c,c,0,\ldots,0}\,],$$
have odd length period $k\geq 3$.
We have $\hat{P}_{2j}=a$ and $\hat{P}_{2j+1}=a+1/b$, $1\le j < k/2$.
If $c^2=-1$ we are in case \eqref{bactrian2} of Theorem \ref{bactrian},
and $\hat{P}_1=a+1/(b-c)$.
If $|c|=1$, $c^2 \ne -1$, we are in case \eqref{bactrian4}, and $\hat{P}_1$
does not exist.
If $|c|\ne1$ we are in case \eqref{bactrian3a} and
\[\hat{P}_1=\beta_+=\begin{cases}a+1/b&\text{if }|c|>1,\\ a&\text{if }|c|<1,\end{cases}\]
so $\beta_+$ has a slim $(k+1)/2$ majority.
A similar construction can be made for even periods by taking
$$P'=[a,b,\overline{c,-1/c,c,0,\ldots,0,d,-1/d,d,0,\ldots,0}\,]$$
where the length of the strings of 0's have the same parity.

\subsection{The fraction of \texorpdfstring{\except{toc}{\boldmath{$\hat{P}_j=\beta_+$}}\for{toc}{$\hat{P}_j=\beta_+$}}
{\textbeta\textjinferior(P)=\textbeta\textplusinferior} can be made arbitrarily close to \texorpdfstring{\except{toc}{\boldmath{$1$}}\for{toc}{$1$}}{1}}
\label{pumpkin3}
Likewise we can make the fraction of $\hat{P}_j=\beta_+$ arbitrarily
close to $1$.  Let $\{F_n\}_{n=1}^\infty =1,1,2,3,5,\ldots $ be the
\textsf{Fibonacci sequence}. Let $b\neq 0$ and 
\[
P\colonequals Q_k=[a,b,\overline{1, \ldots, 1, -F_{k-2}/F_{k-1}}\,],
\]
where there are $(k-1)$ $1$'s.
The continued fraction $P$ for $k\ge4$ provides an example where exactly
$k-1$ of the $k$ $\hat{P}_j$'s are $\beta_+$.
(If $k=3$, $\hat{P}_2$ doesn't exist, and if $k=2$, $\beta_+=\beta_-=\hat{P}$.)

\begin{proposition}
If $k\geq 4$ and $b\ne0$, then
$\hat{P}_j=\beta_+(Q_k)=a+\frac{F_{k-1}}{bF_{k-1}+F_{k-2}}$
for $1\le j\le k-1$, but $\hat{P}_k=\beta_-(Q_k)=a+1/b$.
\end{proposition}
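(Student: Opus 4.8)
The plan is to place $P=Q_k$ inside the strictly quasiconvergent subcase \ref{bactrian}\eqref{bactrian3a} of Theorem \ref{bactrian} and then read off each limit $\hat{P}_j$. I would begin by recording the Fibonacci form of powers of $D(1)$, namely $D(1)^m=\left[\begin{smallmatrix}F_{m+1}&F_m\\F_m&F_{m-1}\end{smallmatrix}\right]$, and computing $R_0(P)=M(\Rep(P))=D(1)^{k-1}D(-F_{k-2}/F_{k-1})$. Using the Cassini identity $F_{k-2}F_k-F_{k-1}^2=(-1)^{k-1}$, this collapses to the upper-triangular matrix $\left[\begin{smallmatrix}(-1)^k/F_{k-1}&F_k\\0&F_{k-1}\end{smallmatrix}\right]$. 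Hence the eigenvalues of $E=E(P)$ are $\lambda_-=(-1)^k/F_{k-1}$ and $\lambda_+=F_{k-1}$, of magnitudes $<1$ and $>1$ since $F_{k-1}\ge F_3=2$ for $k\ge4$; so $P$ lies in case \ref{bactrian}\eqref{bactrian3}, and because $R_0(P)$ is heavy ($t_0(P)=0$ and $|u_0(P)|=F_{k-1}>1$), it is strictly quasiconvergent.

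Next I would reduce the determination of every $\hat{P}_j$ to a ``heaviness audit''. As already established in the proof of Theorem \ref{charquasicon} (combining Proposition \ref{betaMinus} with Proposition \ref{tetrapartite}\eqref{tetrapartite3} applied to each conjugate $R_j$ of $E$), in case \ref{bactrian}\eqref{bactrian3a} one has $\hat{P}_j=\beta_-$ exactly when $R_j(P)$ is heavy and $\hat{P}_j=\beta_+$ otherwise. It therefore suffices to show that among $R_0(P),\dots,R_{k-1}(P)$ only $R_0(P)$ is heavy. For $1\le j\le k-1$ the cyclic rotation gives $R_j(P)=D(1)^{k-1-j}D(-F_{k-2}/F_{k-1})D(1)^j$; multiplying this out and extracting the lower-left entry with a d'Ocagne-type Fibonacci identity yields $t_j(P)=\tfrac{F_j}{F_{k-1}}\bigl((-1)^{k-1-j}F_{j+1}+F_{k-1-j}F_{k-1}\bigr)$.

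I would then analyze the vanishing of $t_j(P)$. Since $F_j>0$ for $j\ge1$, when $k-1-j$ is even both bracketed terms are positive and $t_j(P)\ne0$; when $k-1-j$ is odd the bracket vanishes iff $F_{j+1}=F_{k-1-j}F_{k-1}$, and a short size comparison (using $j+1\le k-1$ together with $F_{k-1-j}\ge 2$ once $k-1-j\ge3$) forces $j=k-2$. Thus $R_{k-2}(P)$ is the only rotation besides $R_0(P)$ that is even upper-triangular. The genuinely delicate point, and what I expect to be the \emph{main obstacle}, is that this apparent second heavy candidate must be ruled out: a direct computation of its lower-right entry, again via Cassini ($F_{k-3}F_{k-1}-F_{k-2}^2=(-1)^{k-2}$), gives $u_{k-2}(P)=(-1)^k/F_{k-1}=\lambda_-$, so $|u_{k-2}(P)|<1$ and $R_{k-2}(P)$ is not heavy. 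Consequently $R_0(P)$ is the unique heavy matrix, which gives $\hat{P}_k=\hat{P}_0=\beta_-$ and $\hat{P}_j=\beta_+$ for $1\le j\le k-1$.

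Finally I would identify $\beta_\pm$ explicitly. Writing $E=M(\In(P))\,R_0(P)\,M(\In(P))^{-1}$, where $M(\In(P))=D(a)D(b)=\left[\begin{smallmatrix}ab+1&a\\b&1\end{smallmatrix}\right]$ and $M(\In(P)^\ast)=M(\In(P))^{-1}$ by Remark \ref{feast}, the eigenvectors of $E$ are the images under $M(\In(P))$ of those of $R_0(P)$. The $\lambda_-$-eigenvector $\binom{1}{0}$ of $R_0(P)$ maps to $\binom{ab+1}{b}$, whence $\beta_-=(ab+1)/b=a+1/b$; and the $\lambda_+$-eigenvector $\binom{F_{k-1}}{F_{k-2}}$ of $R_0(P)$ maps to $\binom{(ab+1)F_{k-1}+aF_{k-2}}{bF_{k-1}+F_{k-2}}$, whence $\beta_+=a+F_{k-1}/(bF_{k-1}+F_{k-2})$. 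These are exactly the asserted values, completing the proof.
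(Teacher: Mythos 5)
Your proposal is correct and follows essentially the same route as the paper: the same upper-triangular matrix $R_0=M(\Rep(P))$ via Cassini, the same conjugation by powers of $D(1)$ to test the lower-left entries of the rotations, the same isolation of $j=k-2$ as the one exceptional upper-triangular case with $|u_{k-2}|<1$, and the same eigenvector computation for $\beta_\pm$. The only (immaterial) differences are that you package the case analysis through the heavy/not-heavy dichotomy of Proposition \ref{betaMinus} and Theorem \ref{charquasicon} --- in particular obtaining $\hat{P}_k=\beta_-$ from heaviness of $R_0$ rather than from the paper's direct computation that the convergents $\mathcal{C}_{2+nk}$ are constant --- and that your closed form for $t_j$ is an equivalent Fibonacci expression analyzed by a size comparison instead of the paper's divisibility argument $F_{k-1}\mid F_{j+1}$.
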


\begin{proof}
 We compute
$M=M(k)\colonequals M([1,1,\dots, 1, -F_{k-2}/F_{k-1}])$:
\begin{align}
\label{turkey}
M &=\begin{bmatrix}1&1\\1&0\end{bmatrix}^{k-1}
\begin{bmatrix} -F_{k-2}/F_{k-1} &1\\1 &0\end{bmatrix}\nonumber\\
&=
\begin{bmatrix} F_k & F_{k-1}\\F_{k-1} & F_{k-2}\end{bmatrix}
\begin{bmatrix} -F_{k-2}/F_{k-1} &1\\1 &0\end{bmatrix}\nonumber\\
&= 
\begin{bmatrix} (-1)^k/F_{k-1} & F_k\\0 & F_{k-1}\end{bmatrix},
\end{align}
using Cassini's identity $F_{k-1}^2-F_{k}F_{k-2}=(-1)^k$ (\cite[p.~81]{Knuth}).

The next step is to compute
 the $\hat{P}_j$, $1\leq j\leq k$, as in \eqref{stuffing}.
We begin with $\hat{P}_k$.
Set 
\begin{equation}
\label{broke}
A=D(a)D(b)=
\begin{bmatrix}a & 1\\1 & 0\end{bmatrix}
\begin{bmatrix}b & 1\\1 & 0\end{bmatrix}=
\begin{bmatrix}
ab + 1 & a \\b & 1\end{bmatrix}.
\end{equation}
Using \eqref{turkey}, we have
\begin{align*}
M(n,k)\colonequals M_{2+nk}(Q_k)&= 
AM^n = \begin{bmatrix}
ab + 1 & a \\b & 1\end{bmatrix}
\begin{bmatrix}
(-1)^k/F_{k-1} & F_k\\ 0 & F_{k-1}\end{bmatrix}^n\\
&= \begin{bmatrix}ab+1 & a\\b & 1\end{bmatrix}\begin{bmatrix}
(-1)^{kn}/F_{k-1}^n & \ast\\0& F_{k-1}^n\end{bmatrix}\\
&=\begin{bmatrix} (ab+1)(-1)^{kn}/F_{k-1}^n & \ast\\b (-1)^{kn}/F_{k-1}^n & \ast
\end{bmatrix}.
\end{align*}
Hence 
\begin{equation*}
\mathcal{C}_{N+nk}=M(n,k)_{11}/M(n,k)_{21}=\frac{ab+1}{b}=a + 1/b
\end{equation*}
for all $n\geq 1$ and
\begin{equation*}
\hat{P}_k=\lim_{n\to\infty}\mathcal{C}_{N+nk}=a + 1/b.
\end{equation*}

For the other $\hat{P}_j$, we need to calculate
$M_j\colonequals D(1)^{-j}M D(1)^j$ for $1\leq j \leq k-1$,
so that we can apply Proposition \ref{tetrapartite}.
Firstly note that
\begin{equation}
\label{gravy}
D(1)^j = \begin{bmatrix} F_{j+1} & F_j\\F_j & F_{j-1}\end{bmatrix},\quad
\text{and so} \quad D(1)^{-j}=(-1)^j\begin{bmatrix} F_{j-1}& -F_j\\
-F_j & F_{j+1}\end{bmatrix}.
\end{equation}
Hence from \eqref{gravy} and \eqref{turkey} we get
\begin{align}
\label{brussel}
M_j &=(-1)^j\begin{bmatrix} F_{j-1}& -F_j\\
-F_j & F_{j+1}\end{bmatrix}
\begin{bmatrix} (-1)^k/F_{k-1} & F_k\\0 & F_{k-1}\end{bmatrix}
\begin{bmatrix} F_{j+1} & F_j\\F_j & F_{j-1}\end{bmatrix}\nonumber\\
&= (-1)^j\begin{bmatrix} \ast & \ast \\
F_j((-1)^{k+1}F_{j+1}/F_{k-1}+F_{j+1}F_{k-1}-F_{j}F_{k}) & \ast\end{bmatrix}.
\end{align}
If $(M_j)_{21}=0$, then we would have $F_{k-1}|F_{j+1}$ from \eqref{brussel}.  
But this is 
not possible for $j\leq k-1$ unless $j=k-2$: 
$F_{k-1}\not | F_{k}$ takes care of $j=k-1\geq 3$ and if $j\leq k-3$,
then $F_{k-1}>F_{j+1}$ since $k\geq 4$.
Hence $(M_j)_{21}\neq 0$ if $1\leq j\leq k-1$, $j\neq k-2$,
so in this case $\hat{P}_j=\beta_+(Q_k)$
by Proposition \ref{tetrapartite}\eqref{tetrapartite3},
with the ``$M$'' there equal to $AMA^{-1}$ and the ``$\beta$'' there
equal to $AD(1)^j\infty$.

For the exceptional case $j=k-2$, explicit computation of $M_{k-2}$ using
the Cassini/Vajda identities for Fibonacci numbers gives
\begin{equation}
\label{cranberry}
M_{k-2}=\begin{bmatrix} F_{k-1} & F_kF_{k-3}/F_{k-1}\\
0 & (-1)^k/F_{k-1}\end{bmatrix}.
\end{equation}
Hence by \eqref{cranberry} $(M_{k-2})_{21}=0$ and 
$|(M_{k-2})_{11}|>|(M_{k-2})_{22}|$ since $k\geq 4$.
Hence $\hat{P}_{k-2}=\beta_+(Q_k)$ again by
Proposition \ref{tetrapartite}\eqref{tetrapartite3},
with the ``$M$'' there equal to $AMA^{-1}$ and the ``$\beta$'' there
equal to $AD(1)^{k-2}\infty$.

Lastly we show the computation giving $\beta_{-}(Q_k)$ and $\beta_+(Q_k)$:
\begin{equation}
\label{ratify4}
\beta_-(Q_k)=a+ 1/b\quad\text{and}\quad \beta_+(Q_k)=
a+\frac{F_{k-1}}{bF_{k-1}+F_{k-2}}.
\end{equation}
Calculating  $\beta_{-}(Q_k)$ and $\beta_{+}(Q_k)$ entails finding
the eigenvectors of  $E(Q_k)=AMA^{-1}$ with $A$ as in \eqref{broke} and $M$
as in \eqref{turkey}.
Let 
\begin{equation*}
v_{-}\colonequals\begin{pmatrix}1\\0\end{pmatrix},\,\, \lambda_{-}\colonequals
\frac{(-1)^k}{F_{k-1}},
\quad\text{and}\quad
v_{+}\colonequals\begin{pmatrix} F_{k-1}\\F_{k-2}\end{pmatrix},\,\,
\lambda_{+}\colonequals F_{k-1}.
\end{equation*}
Observe that $|\lambda_+|>|\lambda_{-}|$ since $k\geq 4$.
Using the Cassini identity yet again, verify that
\begin{equation*}
Mv_{-}=\lambda_{-}v_{-}\quad\text{and}\quad Mv_{+}=\lambda_{+}v_{+}.
\end{equation*}
So $E(Q_k)=AMA^{-1}$ will have eigenvectors $Av_{-}$, $Av_{+}$
with eigenvalues $\lambda_{-}$, $\lambda_{+}$, respectively.
We have 
\begin{align*}
Av_{-}&=\begin{pmatrix}ab+1\\b\end{pmatrix}=b\begin{pmatrix}
a+1/b \\1\end{pmatrix}=bv(\beta_{-})\text{ and}\\
Av_{+}&=\begin{pmatrix}(ab+1)F_{k-1}+aF_{k-2}\\bF_{k-1}+F_{k-2}\end{pmatrix}=
(bF_{k-1}+F_{k-2})\begin{pmatrix}a+\frac{F_{k-1}}{bF_{k-1}+F_{k-2}}\\1\end{pmatrix}\\
&=(bF_{k-1}+F_{k-2})v(\beta_{+}),
\end{align*}
establishing the formulas \eqref{ratify4} for $\beta_{-}(Q_k)$ and
$\beta_{+}(Q_{k})$.
\end{proof}

\subsection{The equivalence 
\texorpdfstring{\except{toc}{\boldmath{$\sim$}}\for{toc}{$\sim$}}{\textasciitilde}
on 
\texorpdfstring{\except{toc}{\boldmath{$\PCF(\OO)$}}\for{toc}{$\PCF(\OO)$}}{PCF(O)} 
does not respect convergence}
\label{pumpkin4}
Theorem \ref{tree} shows quasiconvergence is a property of 
PCF equivalence class.
It is possible for $P\sim Q$, both quasiconvergent,
with  $P$ strictly quasiconvergent (and hence divergent)
and $Q$ convergent.
For example, take $a\ne 0$ and let $P=[\overline{a,0,-1/a}]$, $Q =[\overline{a-1/a}]$.
Let the convergents of $P$ be
$\mathcal{C}_i(P)$, $i\geq 1$.
If $|a|<1$, then $P$ satisfies Theorem \ref{bactrian}\eqref{bactrian3a}, 
and hence is strictly quasiconvergent.
The limits $\hat{P}_j$ for $1\leq j\leq 3$ are 
\begin{equation*}
\begin{array}{ccccc}
\hat{P}_1&=&\lim_{i\to\infty}\mathcal{C}_{1+3i}(P)&=&a, \\
\hat{P}_2&=&\lim_{i\to\infty}\mathcal{C}_{2+3i}(P)&=&-1/a, \\
\hat{P}_3&=&\lim_{i\to\infty}\mathcal{C}_{3+3i}(P)&=&-1/a.
\end{array}
\end{equation*}
The PCF $Q$ converges to $-1/a$, since the
convergents are just $\mathcal{C}_i(Q)=\mathcal{C}_{3i}(P)$.

\bibliographystyle{alpha}
\bibliography{compo}
\end{document}